\documentclass[a4paper,11pt]{amsart}
\usepackage{mathrsfs}\usepackage{xcolor}
\usepackage[all]{xy}
\usepackage{amsmath,amssymb,amscd,bbm,amsthm,mathrsfs}
\usepackage{graphicx}
\newtheorem{thm}{Theorem}[section]
\newtheorem{lem}{Lemma}[section]
\newtheorem{cor}{Corollary}[section]
\newtheorem{prop}{Proposition}[section]
\newtheorem{rem}{Remark}[section]

\begin{document}
\numberwithin{equation}{section}

\title[On octonionic Hessian complex and Hartogs--Bochner extension...]{On octonionic Hessian complex and Hartogs--Bochner extension of octonionic pluriharmonic functions in two octonionic variables}
 
\author{Yun Shi$^a$ and Wei Wang$^b$}

\thanks{$^a$ Department of Mathematics, Zhejiang University of Science and Technology, Hangzhou 310023, China,
 Email: shiyun@zust.edu.cn}

\thanks{$^b$ 
 School of Mathematical Science, Zhejiang University  (Zijingang campus),  Hangzhou 310058, China, Email: wwang@zju.edu.cn}

\subjclass{30G35; 58J10; 35N10; 31C10}
 
\keywords{Octonionic Hessian operator; octonionic Hessian complex; Moufang identities; alternativity; octonionic pluriharmonic functions; the Hartogs--Bochner extension}
 \thanks{The first author is  partially supported by Fundamental Research Funds of Zhejiang university of science and technology (No. 2025QN064),  Nature Science Foundation of Zhejiang province (No. LY22A010013); The second author is partially supported by National Nature Science Foundation in China  (No. 12371082).}
 \begin{abstract}
In this paper, we construct the octonionic Hessian complex, which is the octonionic version of the $\partial\bar \partial$-complex. By   proving  its ellipticity, we can  solve the non-homogeneous octonionic Hessian equations  under a compatibility condition. This is the octonionic version of the $\partial\bar\partial$-lemma. We also  obtain the boundary version of octonionic Hessian operator  and introduce the notion of a tangentially octonionic pluriharmonic function, which allows us to   establish the Hartogs--Bochner extension for tangentially octonionic  pluriharmonic  functions on the boundary of a domain with connected complement. This solves the octonionic version of Poincar\'e's problem for pluriharmonic extension on complex space.   The main difficulty  of this construction is the lack of associativity in octonionic case. However, we can overcome it by careful use of weak form of associativity including  Moufang identities and alternativity. 
\end{abstract}
    \maketitle
\section{Introduction}  
Analysis in one or several octonionic variables has been an active area of research  over the last decade. To name a few, H.-Y. Wang  and Ren \cite{WangR} have established the Bochner--Martinelli formula and  the Hartogs extension theorem for  the octonionic Dirac operator.  Colombo, Krau\ss har and Sabadini \cite{CKS} developed associated  octonionic Hardy and Bergman space theory via para-linear inner products, yielded explicit Szeg\H  o and Bergman kernel formulas for some typical  domains.  Octonionic Hardy spaces were also developed by Li \cite{Li}   using Clifford analysis.  
Xu  and Sabadini \cite{XS} extended  generalized partial-slice monogenic function theory to the   octonionic case. 
 Huo  and Ren \cite{HR} clarified the structure  of  octionionic Hilbert space.  Constales   and Krau\ss har \cite{CK} constructed the octonionic  Kerzman--Stein operator. Discrete octonionic analysis was also investigated by  Krau\ss har  and Legatiuk \cite{KL}.   Since    the   octonionic Dirac operator in one octonionic variable   is precisely the   Dirac operators of several  vector variables on $\mathbb R^8$ from $\mathbb S^+$ to $\mathbb S^-$ \cite{MS},   results for several Dirac operators (cf. \cite{Krump,Sabadini,SW3,SWW} etc.)  can be applied to several octonionic Dirac operators.  
  Recently, to study the octonionic version of Calabi--Yau theorem on octonionic  K\"ahler manifold, Alesker--Gordon \cite{AG} give the octonionic version of $\partial\bar\partial$-lemma. 
  
The  $\partial \bar \partial$-lemma is a fundamental result in complex analysis and complex geometry, i.e. one can solve \begin{align}
\partial \bar \partial u=f, 
\end{align} for a smooth $(1,1)$-form $f$ satisfying the compatibility  condition \begin{align}
\partial f=0=\bar \partial f.
\end{align}  It implies the local existence of K\"ahler potential. These two operators are first two operators in the $\partial \bar \partial$-complex
\begin{equation}\begin{aligned}\label{ddbarc} \Gamma\left(\Omega,\Lambda^{0,0}\right) \xrightarrow{\partial\bar \partial}\Gamma\left(\Omega,\Lambda^{1,1}\right)\xrightarrow{{\rm d}}\Gamma\left(\Omega,\Lambda^{1,2}\right) \oplus \Gamma\left(\Omega,\Lambda^{2,1}\right)\xrightarrow{ }\cdots,\end{aligned}\end{equation}
where $\Lambda^{r,s}$ is the space of smooth $(r,s)$-forms. Its cohomology, the Bott--Chern cohomology, plays an important role in the study of complex manifolds \cite{AT}.   To state  the result of  Alesker and Gordon, denote the space of  $2\times2$ \emph{octonionic Hermitian matrices} by \begin{align*}
\mathcal H_2(\mathbb O):=\left\{\left(\begin{matrix}a&q\\\bar q&b\end{matrix}\right):a,b\in\mathbb R,q\in\mathbb O\right\}.
\end{align*}
Denote a point of  $\mathbb O^2$ by   $\mathbf x=\left( x_1, x_2\right),$ where $  x_j=\sum_{a=0}^7x_j^{a}e_a\in\mathbb O,j=1,2,$ and $\left\{e_a\right\}$ is a basis of $\mathbb O $ satisfying the relation \begin{align}\label{mult}e_{0}^2=1,\ e_a^2=-1,\ e_ae_b=-e_be_a,\end{align} for $a\neq b, a,b=1,\dots,7.$ The conjugate of $ x_j$ is $\bar{ x}_j=x^0_j-\sum_{a=1}^7x_j^ae_a.$ %and ${\rm Re}\  x_j=x_j^0,$ ${\rm Im}\ x_j=\sum_{a=1}^7x_j^ae_a.$ 
   For a $\mathbb O$-valued      smooth function $F$, we can define   \emph{octonionic left  and right Dirac operators}
\begin{equation}\begin{aligned}
 {\partial_{\bar j}}F:=\sum_{a=0}^7   e_a\frac{\partial F}{\partial x_j^a}, 
\qquad \qquad
 F\overleftarrow{\partial_{\bar j}}:=\sum_{a=0}^7\frac{\partial F}{\partial x_j^a}  e_a,
\end{aligned}\end{equation}
for $j=1,2,$ respectively. They are generalizations of the Cauchy--Riemann operator in several complex variables and Cauchy--Fueter operator in several quaternionic variables. We also need conjugate version:   
\begin{equation}\begin{aligned}
 {\partial_{  j}}F=\sum_{a=0}^7   \bar e_a\frac{\partial F}{\partial x_j^a},\qquad \qquad F\overleftarrow{\partial_j}=\sum_{a=0}^7 \frac{\partial F}{\partial x_j^a}\bar e_a.
\end{aligned}\end{equation} In the octonionic case, the $\partial\bar \partial$ operator is replaced by \emph{octonionic Hessian operator} \begin{align} \label{D0}
\mathcal D_0f:=Hess_{\mathbb O}(f)=\left(\begin{matrix}\Delta_1f &{\partial_{ \bar 1}} {\partial_{ 2}}f\\{\partial_{ \bar 2}} {\partial_{ 1}}f&\Delta_2f \end{matrix}\right), \end{align}   for $f\in\Gamma(\Omega,\mathbb R)$.
To solve the non-homogeneous equation \begin{align}\label{duf}
\mathcal D_0f=u, 
\end{align}  
  with  $u=(u_{\bar ij})\in\Gamma\left(\Omega,\mathcal H_2(\mathbb O)\right), $ Alesker--Gordon \cite{AG} found the compatibility condition \begin{align}\label{D1} \mathcal D_1 u:=&\left(\begin{matrix}u_{\bar 1 2}\overleftarrow{\partial_{ \bar 2}}-{\partial_{\bar 1}}u_{\bar 2 2}\\u_{ \bar 2 1}\overleftarrow{\partial_{\bar 1}}-{\partial_{\bar 2}}u_{\bar 11}\end{matrix}\right)=0,
\end{align}
by using   computer program  Macaulay2. They also used this program     to  demonstrate the associated sequence of polynomial modules to be exact, which implies  (\ref{duf}) is solvable on an open convex domain under the condition (\ref{D1}) (cf. e.g. \cite[Theorem 7.6.13]{Homander1}).   

 In this paper, we will   solve  the non-homogeneous equation (\ref{duf}) under compatibility condition (\ref{D1})  by solving the associated Hodge-Laplacian equations. This method requires knowledge of  the next differential operator $\mathcal D_2.$ In fact, we   construct the whole    differential complex on a domain $\Omega\subset\mathbb O^2:$  
 \begin{equation}\begin{aligned}\label{co}
0\rightarrow \Gamma\left(\Omega,\mathcal V_0\right)\xrightarrow{\mathcal{D}_{0}}\Gamma\left(\Omega,\mathcal V_1\right) \xrightarrow{\mathcal{D}_{1}} \Gamma\left(\Omega, \mathcal V_2\right)\xrightarrow{\mathcal{D}_{2}} \Gamma\left(\Omega, \mathcal V_3 \right)\xrightarrow{\mathcal{D}_{3}} \Gamma\left(\Omega,\mathcal V_4 \right)\xrightarrow{\mathcal{D}_{4}} \Gamma\left(\Omega, \mathcal V_5  \right)\rightarrow 0,
\end{aligned}\end{equation}where \begin{align}\mathcal V_0=\mathcal V_5=\mathbb R,\quad \mathcal V_1=\mathcal V_4=\mathcal H_2(\mathbb O),\quad \mathcal V_2=\mathcal V_3=\mathbb O^2,\end{align} i.e. $\mathcal D_{j+1}\circ\mathcal D_j=0$ for each $j$. It is the octionionic version of $\partial\bar \partial$ complex in several complex variables and $0$-Cauchy--Fuerer complex in several quaternionic variables \cite{WanW}.
Moreover, we   prove 
\begin{thm}\label{exact}The  complex {\rm(\ref{co})} is   elliptic. 
\end{thm}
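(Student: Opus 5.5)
We will prove Theorem \ref{exact} at the level of principal symbols. Ellipticity of (\ref{co}) means that for every real cotangent vector $\xi=(\xi_1,\xi_2)\in\mathbb O^2\setminus\{0\}$ (with $\xi_j=\sum_a\xi_j^ae_a$) the symbol sequence
\[
0\to\mathcal V_0\xrightarrow{\sigma_0(\xi)}\mathcal V_1\xrightarrow{\sigma_1(\xi)}\mathcal V_2\xrightarrow{\sigma_2(\xi)}\mathcal V_3\xrightarrow{\sigma_3(\xi)}\mathcal V_4\xrightarrow{\sigma_4(\xi)}\mathcal V_5\to0
\]
is exact. Replacing $\partial/\partial x_j^a$ by $\xi_j^a$ (up to a harmless factor $i$ or sign), $\partial_{\bar j}$ becomes left multiplication by $\xi_j$, $\partial_j$ left multiplication by $\bar\xi_j$, and the right operators become right multiplications. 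Thus
\[
\sigma_0(\xi)f=f\begin{pmatrix}|\xi_1|^2&\xi_1\bar\xi_2\\ \xi_2\bar\xi_1&|\xi_2|^2\end{pmatrix},\qquad
\sigma_1(\xi)u=\begin{pmatrix}u_{\bar12}\xi_2-\xi_1u_{\bar22}\\ u_{\bar21}\xi_1-\xi_2u_{\bar11}\end{pmatrix},
\]
with analogous formulas for $\sigma_2,\sigma_3,\sigma_4$ read off from the explicit $\mathcal D_2,\mathcal D_3,\mathcal D_4$. Since $\mathcal D_{j+1}\mathcal D_j=0$, we already have $\sigma_{j+1}\sigma_j=0$. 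So only the inclusions $\ker\sigma_j\subset{\rm im}\,\sigma_{j-1}$ remain, and we reduce them to rank counting.

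The real dimensions are $1,10,16,16,10,1$, and the alternating sum vanishes. Exactness is equivalent to the ranks being
\[
{\rm rank}\,\sigma_0=1,\quad {\rm rank}\,\sigma_1=9,\quad {\rm rank}\,\sigma_2=7,\quad {\rm rank}\,\sigma_3=9,\quad {\rm rank}\,\sigma_4=1.
\]
The complex is built symmetrically: $\mathcal D_{4-j}$ is, up to sign, the formal adjoint of $\mathcal D_j$ with respect to the real inner products ${\rm Re}(\bar a b)$ on the fibres. This should be visible from the explicit formulas and the identity ${\rm Re}((ab)c)={\rm Re}(a(bc))$. Granting it, $\sigma_{4-j}(\xi)=\pm\sigma_j(\xi)^{*}$, so it suffices to prove the first three rank statements.

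The first two statements are handled as follows.
\begin{itemize}
\item $\sigma_0(\xi)$ is injective because $|\xi|\neq0$.
\item For $\sigma_1$ we show that $\ker\sigma_1(\xi)=\mathbb R\,\xi\xi^*$. Suppose $\xi_1\neq0$ (the case $\xi_2\ne0$ is symmetric). The first equation gives $\xi_1u_{\bar22}=u_{\bar12}\xi_2$. Since $u_{\bar22}$ is real, this gives $u_{\bar12}=u_{\bar22}\,\xi_1\xi_2^{-1}$ when $\xi_2\ne0$; we use Artin's theorem that any two octonions generate an associative subalgebra.
\item Substituting into the second equation and using alternativity in the subalgebra generated by $\xi_1,\xi_2$, we obtain $u_{\bar11}|\xi_2|^2=u_{\bar22}|\xi_1|^2$, with $u_{\bar11}$ also real. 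This yields $u=t\,\xi\xi^*$. When $\xi_2=0$ the equations directly force $u_{\bar22}=0$ and $u_{\bar12}=0$.
\end{itemize}

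The main obstacle is ${\rm rank}\,\sigma_2(\xi)=7$, that is, $\ker\sigma_2(\xi)={\rm im}\,\sigma_1(\xi)$, a space of dimension $9$. Here the expressions involve three genuinely independent octonions: $\xi_1$, $\xi_2$, and the unknown $v=(v_1,v_2)\in\mathbb O^2$. Artin's theorem no longer applies, so we must use the Moufang identities $(xyx)z=x(y(xz))$, $z(xyx)=((zx)y)x$ and $(xy)(zx)=x(yz)x$ to move $\xi_1,\xi_2$ past $v$.

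We first reduce to a normal form. The group ${\rm Spin}(8)$, acting by triality on $(\xi_1,\xi_2)$ and compatibly on all $\mathcal V_j$, preserves the complex. We would check this first; if it fails, we use only the left and right multiplication by unit octonions that preserve the relevant products. Together with a real rescaling, this lets us assume $\xi_2\in\mathbb R$, or even $\xi=(\xi_1,0)$ or $\xi=(1,t)$. In that case multiplication by a real number associates with everything, and $\sigma_2$ becomes an explicit $16\times16$ real matrix in $\xi_1$. We then solve $\sigma_2(\xi)v=0$ directly and show that every solution is of the form $\sigma_1(\xi)u$.

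If the group reduction fails, we instead compute $\sigma_2\sigma_2^*+\sigma_1\sigma_1^*$ on $\mathcal V_2$, simplifying with the Moufang identities. The goal is to show that it equals $|\xi|^4$ times the identity, possibly up to a positive factor. This Laplacian identity gives exactness at $\mathcal V_2$ at once, and it is also what the later Hodge-theoretic solution of (\ref{duf}) will need.
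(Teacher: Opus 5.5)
Your route is correct in outline but genuinely different from the paper's. The paper (Propositions \ref{complex} and \ref{pell}) works at an arbitrary complexified $\xi$. Slot by slot it writes down explicit preimages: $\hat f=\hat u_{\bar11}/|\boldsymbol{\xi}_1|^2$, the matrix (\ref{xi}), $\hat v=\hat t/(2(|\boldsymbol{\xi}_1|^2+|\boldsymbol{\xi}_2|^2))$, and explicit $\hat t$, $\hat s$. It splits cases when some $|\boldsymbol{\xi}_j|^2=0$ and lets alternativity and the Moufang identities absorb the non-associativity; it uses neither a dimension count nor the adjoint relations. You instead cut the work to three ranks, using the count $1,9,7,9,1$ together with $\mathcal D_3=\mathcal D_1^*$ and $\mathcal D_4=\mathcal D_0^*$. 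The paper defines them this way (Proposition \ref{Dstar}), so nothing needs to be granted, and self-adjointness of $\mathcal D_2$ is not needed. You then remove the non-associativity by moving $\xi$ to a normal form. The paper's method gives explicit inverses at the symbol level and needs no group theory. Yours is shorter and more structural, but all the octonionic content sits in the equivariance claim you left as ``we would check this first''.

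That claim holds, and it is what you must actually prove. Let $(g_1,g_2,g_3)\in SO(8)^3$ satisfy $g_3(xy)=g_1(x)g_2(y)$; these triples form $\mathrm{Spin}(8)$, for example $(L_a,R_a,L_aR_a)$ with $|a|=1$ by (\ref{M3}). Put $g_2'(y):=\overline{g_2(\bar y)}$. Alternativity then gives
$g_3(x\bar y)=g_1(x)\overline{g_2'(y)}$, $g_3(w)\,g_2'(y)=g_1(wy)$ and $\overline{g_3(w)}\,g_1(x)=g_2'(\bar w x)$.
Also $2{\rm Im}(v\bar\xi)\xi=2|\xi|^2v-2\langle v,\xi\rangle\xi$ is $SO(8)$-equivariant. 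Now let $g$ act as follows:
\begin{itemize}
\item on $\xi$ by $\xi\mapsto(g_1\xi_1,g_2'\xi_2)$;
\item trivially on $\mathcal V_0$ and $\mathcal V_5$;
\item on $\mathcal V_1,\mathcal V_4$ by $u_{\bar12}\mapsto g_3u_{\bar12}$;
\item on $\mathcal V_2,\mathcal V_3$ by $(v_1,v_2)\mapsto(g_1v_1,g_2'v_2)$.
\end{itemize}
With these actions every $\sigma_j$ intertwines; for $\sigma_3$ and $\sigma_4$ this follows because the actions are orthogonal. The group acts transitively on $S^7\times S^7$, since the stabilizer of $\xi_1=1$ is a $\mathrm{Spin}(7)$ acting on $\xi_2$ through its transitive spin representation. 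So you may take $\xi=(1,t)$ with $t\ge0$, or $\xi=(0,1)$. In your real convention at $\xi=(1,t)$:
\begin{itemize}
\item $\ker\sigma_1$ is spanned by $(u_{\bar11},u_{\bar12},u_{\bar22})=(1,t,t^2)$;
\item $\sigma_2v=(w,tw)$ with $w=2({\rm Im}\,v_1+t\,{\rm Im}\,v_2)$, which has rank $7$;
\item $\sigma_2\sigma_1=0$ is immediate.
\end{itemize}
You should verify $\sigma_2\sigma_1=0$ in the normal form rather than assume it. The vanishing of the compositions is part of what the paper proves (Proposition \ref{complex}), and there it is exactly where the Moufang identities are needed.

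Discard your fallback. On $\mathcal V_2$ the relevant operator is $\sigma_1\sigma_1^*+\sigma_2^*\sigma_2$, with $\sigma_2^*\sigma_2$ rather than $\sigma_2\sigma_2^*$. It mixes degrees $2$ and $4$ in $\xi$, so it cannot equal $c|\xi|^4$. Even the homogeneous operator $(\sigma_1\sigma_1^*)^2+\sigma_2^*\sigma_2$ from (\ref{hodge}) is not scalar. At $\xi=(1,0)$, with the inner product (\ref{inner}), it acts by $1$ on ${\rm Re}\,v_1$, by $4$ on ${\rm Im}\,v_1$ and by $\frac14$ on $v_2$. Only its invertibility is true, and that is equivalent to what you are trying to prove.
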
\noindent It means its  symbol sequence    
\begin{equation}\begin{aligned}\label{simco}
0\rightarrow\mathbb R\xrightarrow{\sigma_{0}(\xi)}  \mathcal H_2(\mathbb O)\xrightarrow{\sigma_{1}(\xi)} \mathbb O^2\xrightarrow{\sigma_{2}(\xi)} \mathbb O^2  \xrightarrow{\sigma_{3}(\xi)}  \mathcal H_2(\mathbb O)\xrightarrow{\sigma_{4}(\xi)}  \mathbb R\rightarrow0,
\end{aligned}\end{equation}  exact for each $\xi\in\mathbb O^2\setminus\{\mathbf 0\}$, where $\sigma_j(\xi)$ is the symbol of the operator $\mathcal D_j$.

  The operator  $\mathcal D_2$  in (\ref{co}) is 
{\small\begin{equation}\begin{aligned}\label{D} 
  \mathcal D_2v:=&\left(\begin{matrix} \left( {\partial_{\bar 1}} {\partial_2} \right)v_2- \left({\partial_{\bar 1}}\bar v_2\right)\overleftarrow{\partial_{\bar 2}}+2{\rm Im}\left(v_1\overleftarrow{\partial_1}\right) \overleftarrow{\partial_{\bar 1 }} \\ \left( {\partial_{\bar 2}} {\partial_1} \right)v_1- \left({\partial_{\bar 2}}\bar v_1\right)\overleftarrow{\partial_{\bar 1}}+2{\rm Im}\left(v_2\overleftarrow{\partial_2}\right) \overleftarrow{\partial_{\bar 2}} \end{matrix}\right),\quad {\rm for}\ v=\left(\begin{matrix}v_1\\v_2\end{matrix}\right) \in\Gamma\left(\Omega, \mathbb O^2\right),
\end{aligned}\end{equation}}while  $\mathcal D_3$ and $\mathcal D_4$ as formal adjoint of $\mathcal D_1$ and $\mathcal D_0$ respectively, are given by  \begin{equation}\begin{aligned}\label{D34}\mathcal D_3t: =& \left(\begin{matrix} {\rm Re}\left(\partial_{ 2}t_2\right) &-\frac{{\partial_{\bar1}} \bar t_2+ t_1 \overleftarrow{\partial_2}}{2}\\-\frac{t_2 \overleftarrow{\partial_1}  +  {\partial_{\bar2}} \bar t_1 }{2} &{\rm Re}\left({\partial_{ 1}}t_1\right)\end{matrix}\right),   \quad {\rm for}\ t= \left(\begin{matrix}t_1\\t_2\end{matrix}\right) \in\Gamma\left(\Omega, \mathbb O^2\right),\\\mathcal D_4s: =&  
 \Delta_1 s_{\bar11}+\Delta_2s_{\bar 22}+2{\rm Re}\left({\partial_{2}}s_{\bar 21} \overleftarrow{\partial_{\bar 1}} \right), \quad {\rm for}\ s=\left(\begin{matrix}s_{\bar 1 1} &s_{\bar 1 2} \\s_{ \bar 2 1} &s_{\bar 2 2} \end{matrix}\right) \in\Gamma\left(\Omega,\mathcal H_2(\mathbb O)\right), 
\end{aligned}\end{equation}
where $\Delta_j = \sum_{a=0}^7 \partial^2_{x_{j}^a},j=1,2$ is Laplacian. To construct operators $\mathcal D_j$'s, the main difficulty is the  lack of associativity in octonionic case. However, we can overcome it by carefully using the following weak form of associativity: alternativity (theorem of Artin) and the Moufang identities (cf. e.g. \cite[Proposition 1.4.1]{SpringerV} \cite{Schafer}).  
\\
{\bf Theorem A} {\it 
 The subalgebra generated by any two elements $u, v$ of an alternative algebra  is associative, i.e.  
\begin{align*}
 (uv)u = u(vu),
\end{align*}which can be written as $uvu$ without confusion.} 
\\
{\bf Theorem B} {\it 
   The composition algebra $\mathbf C$ {\rm(}octonions over any number field $F$  form a composition algebra{\rm)} obey some weak associative laws,  
\begin{align}
\label{M1}(uvu)x = u(v(ux)),\\ \label{M2}x(uvu) = ((xu)v)u,\\ \label{M3}u(xy)u = (ux)(yu),
\end{align}
for any $u,v,x,y\in \mathbf C.$ 
}

 A function $f$ on a domain in $\mathbb C^n$ is said to be  \emph{pluriharmonic} if it  is harmonic on every complex line. It is equivalent to requiring  $$\partial \bar \partial f=0.$$   $\partial\bar\partial$-complex also have the following important application to a problem set by Poincar\'e \cite{Poincare}:    to find necessary and sufficient conditions for a  real-valued  function $u$ given on the boundary of a bounded domain $\Omega\in\mathbb C^n$ is the trace on $\partial \Omega$ of a pluriharmonic function in $\Omega.$ The history and some  known results about this problem can be found in \cite{Fichera}.  It was solved by Bedford on the unit ball in $\mathbb C^n$ \cite{Bedford} and then by Bedford--Federbush \cite{BF} for general domain with $C^3$ boundary.  
  
  A function $f$  is said to be  \emph{quaternionic pluriharmonic} if it  is harmonic on every quaternionic  line.  A function $f$  is said to be  \emph{octonionic  pluriharmonic} (OPH) if it  is harmonic on every octonionic  line. Equivalently,    $f$ is   an  {OPH   function} on $\Omega\in\mathbb O^2$ if \begin{align*}
\mathcal D_0f(\mathbf x)=0,
\end{align*}for any $\mathbf x\in\Omega.$ The space of all OPH functions on $\Omega$ is denoted by OPH$ (\Omega).$ 
We will apply octonionic Hessian complex to show Hartogs--Bochner extension for OPH function in two variables.

 As quaternionic Hessian can be used to define quaternionic   Monge--Amper\`e operator \cite{Alesker1,Alesker2}, octonionic Hessian can be used to define the \emph{octonionic Monge--Amp\`ere operator}  
\begin{align}
{\rm det}\left(Hess_{\mathbb O}(f)\right).
\end{align} It plays an important role in the study of plurisubharmonic functions on the octonionic plane $\mathbb O^2$ introduced by Alesker \cite{Alesker3} and applied to the theory of valuations on convex sets.
   Alesker--Gordon \cite{AG} also  
  introduced a class of Riemannian metrics on ${\rm GL}_2(\mathbb O)$-affine manifolds that are octonionic analogues of K\"ahler metrics on complex manifolds and of HKT-metrics of hypercomplex manifolds and established  an octonionic version of the Calabi--Yau theorem on octonionic  K\"ahler manifolds by studying octonionic Monge--Amp\`ere equation.

As in the quaternionic case \cite{Wa10}-\cite{wang29} and the case of several vector variables   \cite{SW3,SWW}, we can apply the  PDE method of several complex variable to study this complex. We show the non-homogeneous octonionic Hessian  equation (\ref{duf})  
has  compactly supported solution if $u$ is compactly supported and satisfies the compatibility condition  (\ref{D1}). 
 \begin{thm}\label{t31}
Suppose that $u\in L^2\left(\mathbb O^{2},\mathcal V_{j+1}\right),j=0,1,\dots,4,$ satisfies the compatibility condition $\mathcal D_{j+1}u=0$  
 in the sense of distributions. Then there exists a function $f\in W^{1,2}\left(\mathbb O^{2},\mathcal V_j\right)$ satisfying the non-homogeneous   equation  
 $\mathcal D_jf=u$. Furthermore, if $u\in C_c\left(\mathbb O^{2},\mathcal V_{j+1}\right)$ with $\mathcal D_{j+1}u=0$ in the sense of distributions,  then there exists a function $f\in C_c\left(\mathbb O^{2},\mathcal V_j\right)\cap W^{1,2}\left(\mathbb O^{2},\mathcal V_j\right)$ satisfying $\mathcal D_jf=u$  
  and vansihing on the unbounded connected component of $\mathbb O^{2}\setminus {\rm supp}\ u.$
\end{thm}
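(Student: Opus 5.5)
We will prove Theorem \ref{t31} by the Hodge--Laplacian method on the whole space $\mathbb O^2$, using the ellipticity from Theorem \ref{exact}. Write $\mathcal D_j^*$ for the formal adjoints with respect to the standard real inner products on $\mathcal V_j$. For each $j$ we form the Laplacian
\begin{align*}
\Box_j:=\mathcal D_{j-1}\mathcal D_{j-1}^*+\mathcal D_j^*\mathcal D_j
\end{align*}
acting on $\Gamma(\mathbb O^2,\mathcal V_j)$. The orders of the $\mathcal D_j$ are $2,1,2,1,2$. Hence $\Box_j$ has mixed order (order $4$ for $j=0,5$, and a combination of orders $2$ and $4$ in the middle). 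We therefore first homogenize, for instance by replacing $\mathcal D_j^*\mathcal D_j$ with $\mathcal D_j^*\Delta\mathcal D_j$ when $\mathcal D_j$ is first order, where $\Delta=\Delta_1+\Delta_2$. This makes every $\Box_j$ a constant coefficient homogeneous operator of order $4$. Exactness of the symbol sequence (\ref{simco}) for $\xi\neq 0$ then gives that $\sigma(\Box_j)(\xi)$ is positive definite, hence invertible. The standard linear algebra fact behind this is that, for an exact sequence, $\sigma\sigma^*+\sigma^*\sigma$ is invertible.

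\textbf{Construction of $f$.} The symbol $\sigma(\Box_{j+1})(\xi)$ is homogeneous of degree $4$ and invertible on $\xi\neq0$, so $\Box_{j+1}$ has a fundamental solution $E_{j+1}$ given by a homogeneous matrix kernel. Since $16>4$ the kernel is homogeneous of degree $-12$, with no logarithmic terms in dimension $16$. Given $u$ with $\mathcal D_{j+1}u=0$, set $w=E_{j+1}*u$, so that $\Box_{j+1}w=u$, and define $f:=\mathcal D_j^*w$ (with the appropriate $\Delta$-weight included). Then
\begin{align*}
\mathcal D_jf=u-\mathcal D_{j+1}^*\mathcal D_{j+1}w \quad(\text{up to the weight}).
\end{align*}
Because convolution commutes with constant coefficient operators, $\mathcal D_{j+1}w=E'*\mathcal D_{j+1}u=0$, where $E'$ is the fundamental solution of the next Laplacian. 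This uses the intertwining $\mathcal D_{j+1}\Box_{j+1}=\Box_{j+2}\mathcal D_{j+1}$, which follows from $\mathcal D_{j+2}\mathcal D_{j+1}=0$ and $\mathcal D_{j+1}\mathcal D_j=0$. Hence $\mathcal D_jf=u$.

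\textbf{Estimates.} The operator $u\mapsto \nabla \mathcal D_j^* E_{j+1}*u$ is given by a homogeneous symbol of degree $0$, smooth on the sphere, so it is a Calder\'on--Zygmund multiplier and is bounded on $L^2$. The operator $u\mapsto \mathcal D_j^*E_{j+1}*u$ itself has symbol homogeneous of degree $-1$. On $L^2$ data this is handled by the Hardy--Littlewood--Sobolev inequality, or directly via Plancherel, since $|\xi|^{-1}$ is not bounded near $0$. This step is delicate, and we expect it to be the main obstacle: showing $f\in L^2$ and not merely $\dot W^{1,2}$. We would first prove the result for $u\in L^2$ with compact support, where $f=O(|x|^{-15})$ at infinity and is therefore in $L^2$ since $2\cdot 15>16$. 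For general $u$ we would interpret $W^{1,2}$ as the homogeneous space, or approximate.

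\textbf{Compact support.} Let $u\in C_c$ and let $U$ be the unbounded component of $\mathbb O^2\setminus\operatorname{supp}u$. On $U$ we have $\mathcal D_jf=0$, and $f$ decays like $|x|^{-15}$ there. For $j=0$ this makes $f$ OPH, so each $\Delta_kf=0$ and $f$ is harmonic on $U$, decaying, and hence $0$ near infinity. By analyticity, since elliptic solutions of $\Box$ are real analytic, $f\equiv0$ on $U$; the bounded components can be enlarged into the support. For general $j$ we instead use the Hartogs-type argument: in $U$, $f$ solves $\mathcal D_jf=0$ and $\mathcal D_{j-1}^*f=0$, because $f=\mathcal D_j^*w$ and $\mathcal D_{j-1}^*\mathcal D_j^*=0$. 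So $f$ is annihilated by the elliptic operator $\Box_j$ in $U$ and is real analytic there. Outside a large ball we correct $f$ by $\mathcal D_{j-1}g$, where $g$ solves $\mathcal D_{j-1}g=f$ near infinity. Such $g$ comes from the vanishing cohomology at infinity of the elliptic complex in dimension $16$; this is the argument of \cite{Wa10,SW3}, using cutoff and induction on $j$. Subtracting $\mathcal D_{j-1}(\chi g)$ gives a solution vanishing on $U$. For $j=0$ no correction is needed, by the direct argument above.
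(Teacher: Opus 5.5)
The existence part of your proposal is the paper's argument. You set $f=\mathcal D_j^*\mathbf G_{j+1}u$, and $\mathcal D_{j+1}\mathbf G_{j+1}u=\mathbf G_{j+2}\mathcal D_{j+1}u=0$ follows from $\mathcal D_{j+1}\Box_{j+1}=\Box_{j+2}\mathcal D_{j+1}$. Homogenizing with $\mathcal D^*\Delta\mathcal D$ instead of the paper's $(\mathcal D^*\mathcal D)^2$ is harmless; you need $-\Delta$ for a positive symbol, but the symbol is invertible either way.

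The genuine gap is the support step for $j=0$. You note $\Delta_kf=0$, but then argue that $f$ is harmonic on $U$ and decays, ``hence $0$ near infinity''. That inference is false: $|\mathbf x|^{-14}$ is harmonic and decaying on every exterior domain of $\mathbb R^{16}$. The missing idea, which is the core of the paper's proof, is to use complete octonionic lines. Since ${\rm supp}\,u$ projects into a ball $\{|x_1|\le M\}$, for $|x_1|>M$ the whole slice $\{x_1\}\times\mathbb O$ lies in $U$. There $\Delta_2f(x_1,\cdot)=0$ on all of $\mathbb R^8$, and decay plus Liouville give $f(x_1,\cdot)\equiv0$. Real analyticity of $f$ on the connected set $U$ (from $\Delta^2f=\mathcal D_0^*\mathcal D_0f=0$) then gives $f\equiv0$ on $U$. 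Also, for $j=0$ the kernel of $\mathcal D_0^*\mathbf G_1$ decays like $|\mathbf x|^{-14}$, not $|\mathbf x|^{-15}$.

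For $j\ge1$ your cut-off correction is the standard mechanism, and it is more than the paper offers; the paper only says the other cases are ``similar''. However, its key input, that $f=\mathcal D_{j-1}g$ near infinity, is asserted rather than proved, and it cannot hold in the top degree. For $j=4$, every compactly supported $f$ satisfies $\int\mathcal D_4f\,{\rm d}V=0$, because each term of $\mathcal D_4$ is a derivative. Hence a $u\in C_c$ with $\int u\,{\rm d}V\neq0$ has no compactly supported solution, so the compact-support claim itself fails for $j=4$. You would have to prove exactness at infinity in exactly the degrees where it holds.

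Finally, your worry about $L^2$ versus $W^{1,2}$ is justified and is not a defect of your method. The paper simply asserts that $\mathbf G_j$ is bounded from $L^2$ to $W^{4,2}$. That holds only for the fourth-order seminorm, since $L_j(\xi)^{-1}\sim|\xi|^{-4}$ at $\xi=0$. For $j=0$, take $u=\mathcal D_0g$ with $\hat g=|\xi|^{-9}$ near $\xi=0$, smoothly cut off.
\begin{itemize}
\item Then $u\in L^2$ and $\mathcal D_1u=0$.
\item Every tempered solution is $g+P$ with $P$ a polynomial, because $\mathcal D_0^*\mathcal D_0=\Delta^2$.
\item Neither $g+P$ nor $\nabla(g+P)$ is square integrable.
\end{itemize}
So for general $L^2$ data only a homogeneous space is attainable. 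For even $j$, where $\mathcal D_j^*\mathbf G_{j+1}$ has degree $-2$ rather than $-1$, that space is $\dot W^{2,2}$. For compactly supported $u$, your decay argument does give the inhomogeneous space.
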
\noindent This theorem  implies  the Hartogs extension theorem for OPH functions (Corollary \ref{hart}).

Andreotti--Nacinovich \cite{AN} constructed a boundary version $\left(\partial\bar \partial\right)_b$ of the operator $\partial\bar\partial,$ which is also an operator of second order. For a couple of functions $u_0$ and $u_1$ satisfying $\left(\partial\bar \partial\right)_b\left(u_0+\rho u_1\right)=0$ on the boundary, where $\rho$ is  the defining function of a domain $\Omega$ with connected complement, then there exists a pluriharmonic function $U$ such that $U=u_0+\rho u_1+O\left(\rho^2\right)$ on the boundary. Equivalently, a $C^2$ function $u$ is quaternionic  pluriharmonic if and only if $\Delta u = 0,$ where $\Delta$ is the $0$-Cauchy--Fueter operator. The second named  author established  the Hartogs--Bochner extension for $k$-regular functions including $k=0$ \cite{wang29}. The case $k = 1$   was   proved by Maggesi--Pertici--Tomassini \cite{MPT} first.  

To investigate OPH functions on a domain, we need  the boundary  complex of the octonionic Hessian complex (\ref{co}). Without loss of generality, we can assume the real  defining function of the domain can be written locally as
 \begin{align}\label{defin'}
\rho(\mathbf x)=x^0_1-\phi\left(x^1_1,\dots x^7_1,x^0_2,\dots,x^7_2\right),
 \end{align} for $\mathbf x\in \mathbb O^2.$  
 By using the construction  of the boundary complex of a general differential complex  (see e.g. \cite{Andreotti,AN,Naci,Nacinovich85}),  we get the \emph{tangential octonionic Hessian  operators}:
  \begin{equation}\begin{aligned}\label{mathscrD}
 {\mathscr D}_0:\Gamma\left(\partial \Omega,\mathbb R^2\right)&\rightarrow \Gamma\left(\partial \Omega,  \mathcal H'_2(\mathbb O) 
 \right),\\\hat f=\left(u_0,u_1\right)&\mapsto \mathscr D_0\hat f, 
  \end{aligned}\end{equation} where \begin{align}\label{h2o'}\mathcal H'_2(\mathbb O):=\left\{\left(\begin{matrix}0&q\\\bar q&b\end{matrix}\right): b\in\mathbb R,q\in\mathbb O\right\}.\end{align}
It is  the counterpart  $\left(\partial\bar\partial\right)_b$ on the boundary of a domain in $\mathbb C^n.$   $\hat f\in \Gamma\left(\partial \Omega,\mathbb R\right)$ is said to be \emph{tangentially octonionic  pluriharmonic {\rm(}tangentially OPH{\rm)}} if \begin{align*} {\mathscr D}_0\hat f=0.
\end{align*}
  We   establish  the Hartogs--Bochner extension  for tangentially OPH functions.
\begin{thm}\label{HBE} Suppose that   $\Omega$ is a bounded domain in $\mathbb O^{2}$ with smooth boundary such that $\mathbb O^{2}\setminus \overline \Omega$ is connected. If $f=\left(u_0,u_1\right)$ is a smooth tangentially OPH  function on $\partial \Omega$, then there exists a OPH function $\tilde f$ in $ \Omega $ smooth up to the boundary such that $\tilde f =u_0+\rho u_1 $   (${\rm mod}\ \rho^2  
$).
\end{thm}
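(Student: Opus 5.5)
The plan is the standard Hartogs--Bochner argument via the compactly supported solvability in Theorem \ref{t31}. The key input is that the tangential operator $\mathscr D_0$ is constructed so that $\mathscr D_0(u_0,u_1)=0$ on $\partial\Omega$ holds exactly when some, equivalently any, smooth extension $F$ of $u_0+\rho u_1$ with the prescribed $1$-jet satisfies
\begin{align*}
\mathcal D_0F=O(\rho)\ \ {\rm on}\ \partial\Omega,
\end{align*}
after adjusting the second normal derivative of $F$. Indeed, $\mathcal D_0$ is of second order, and the normal components that cannot be controlled by the $1$-jet are absorbed by choosing $\partial^2_\rho F$. This is precisely why $\mathcal H'_2(\mathbb O)$ has the $(1,1)$-entry removed.

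\textbf{Step 1: a flat extension.} Since $\rho$ is noncharacteristic for $\mathcal D_0$ (the $\Delta_1$ entry), we successively choose the higher normal derivatives of an extension $F\in C^\infty(\overline\Omega)$ with $F=u_0+\rho u_1+O(\rho^2)$. The aim is $\mathcal D_0F=O(\rho^\infty)$ at $\partial\Omega$, i.e.\ $\mathcal D_0F$ vanishes to infinite order there.
\begin{itemize}
\item The second-order coefficient is fixed by the $(1,1)$-equation.
\item Tangential OPH-ness guarantees that the remaining entries vanish on $\partial\Omega$.
\item For higher orders, the compatibility $\mathcal D_1\mathcal D_0=0$ propagates vanishing. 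Writing $\mathcal D_0F=\rho^k G$ and applying $\mathcal D_1$ shows that the tangential part of $G$ vanishes on $\partial\Omega$, so the next normal coefficient can be corrected by solving only for the $(1,1)$-entry.
\end{itemize}
A Borel-type summation then gives $F$. Alternatively, it may suffice to work only up to $O(\rho)$ and use a Whitney-type extension across the boundary.

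\textbf{Step 2: the compactly supported datum.} Define $u=\mathcal D_0F$ in $\Omega$ and $u=0$ outside $\overline\Omega$. Because $u$ vanishes to infinite order at $\partial\Omega$, $u\in C_c(\mathbb O^2,\mathcal H_2(\mathbb O))$ is smooth. Moreover $\mathcal D_1u=\mathcal D_1\mathcal D_0F=0$ in $\Omega$ and trivially outside, hence $\mathcal D_1u=0$ in the sense of distributions.

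\textbf{Step 3: apply Theorem \ref{t31} and conclude.} Theorem \ref{t31} with $j=0$ gives a compactly supported $g$ with $\mathcal D_0g=u$ that vanishes on the unbounded component of $\mathbb O^2\setminus{\rm supp}\,u$. Since $\mathbb O^2\setminus\overline\Omega$ is connected and ${\rm supp}\,u\subset\overline\Omega$, $g$ vanishes on all of $\mathbb O^2\setminus\overline\Omega$. Elliptic regularity for the complex (Theorem \ref{exact}) makes $g$ smooth. On the exterior $\mathcal D_0g=0$ and $g=0$, so by continuity $g$ vanishes to infinite order on $\partial\Omega$. Set $\tilde f=F-g$ in $\Omega$. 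Then $\mathcal D_0\tilde f=0$, $\tilde f$ is smooth up to the boundary, and $\tilde f=u_0+\rho u_1\pmod{\rho^2}$.

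The main obstacle is Step 1: relating $\mathscr D_0$ to the normal-jet obstruction and proving that the higher-order corrections always exist. This requires the explicit boundary-complex computation with the defining function (\ref{defin'}), tracking non-associative products via Theorems A and B. I would first verify the order-$2$ step explicitly in the coordinates where $\rho=x_1^0-\phi$, and then argue inductively using $\mathcal D_1\mathcal D_0=0$.
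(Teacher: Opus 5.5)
Your proposal is correct and follows the paper's proof. Step 1 is exactly Proposition \ref{phat}: correct $u_0+\rho u_1$ by $-\frac12\rho^2H$, $-\frac16\rho^3H_1,\dots$ using the description of $\mathcal J_1(U)$ and $\mathcal D_1\circ\mathcal D_0=0$, then apply a Whitney/Borel extension. Steps 2--3 are the paper's zero-extension of $\mathcal D_0\tilde f$ followed by Theorem \ref{t31} with $j=0$ on the connected exterior $\mathbb O^2\setminus\overline\Omega$. You also spell out what the paper leaves implicit: the correction is smooth and vanishes to infinite order on $\partial\Omega$, which gives smoothness up to the boundary and preserves $u_0+\rho u_1 \pmod{\rho^2}$.
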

The paper is organized as follows. In Section 2, we give the preliminaries on octonions and deduce the adjoint operator of octonionic Dirac operators $\mathcal D_0,\mathcal D_1$.  In Section 3, we construct the complex    {\rm(\ref{co})} with operators given by   (\ref{D0}), (\ref{D1})  and  (\ref{D})-(\ref{D34}), and show it to be     elliptic. In Section 4, we study   the boundary complex of the octonionic Hessian  complex and obtain the boundary version of octonionic Hessian  operators. In Section 5, we solve the non-homogeneous octonionic Hessian equation (\ref{duf}) under the compatibility condition (\ref{D1}), by solving the associated Hodge-Laplacian equations of fourth order. Finally,  we prove the Hartogs--Bochner extension for tangentially OPH  functions. In the Appendix, we  prove the complexified version of Lemma \ref{asso}.

\section{Preliminaries}  
\subsection{Octonions} Let $\mathbb O_{\mathbb F} $ denote the octonions over a number field $\mathbb F$. 
Any element   $x\in\mathbb O_{\mathbb F}$   can written as   $x=\sum_{a=0}^7z_ae_a$ with $z_a\in {\mathbb F}.$  Its conjugation is defined by  \begin{align}\label{conj}\bar x=\sum_{a=0}^7z_a\bar e_a.\end{align} Denote   ${\rm Re}\ x=z_0,$ ${\rm Im}\ x=\sum_{a=1}^7z_a e_a $ and  $|x|^2=\sum_{a=0}^7z_a^2.$ Note that the conjugate is $\mathbb F$-linear and  ${\rm Re}\ x$ may not always be real number for octonions over a field $\mathbb F.$  
We write $\mathbb O=\mathbb O_{\mathbb R}$ for  simplicity. We refer to \cite{SpringerV} for a comprehensive reference on octonionic algebras over any field.

 An octonion $  x\in\mathbb O$ can be written as $  x=  a+  be_4,$ where $( a,  b)\in\mathbb H^2.$ Then the multiplication can be written as (cf. e.g. \cite{AG,Baez})
$$(  a,   b)(  c,   d) = \left(  a  c -\bar{ { d}}  b,   d  a +   b \bar{ {c}}\right).$$ 
The octonion  algebra $\mathbb{O}$ is neither commutative nor associative. For $  x,y,z\in\mathbb O_{\mathbb F},$ define an {\it associator}: $ \{x,y,z\}:=(xy)z-x(yz).$ %By Moufang identities in Proposition  \ref{Mou},  
The octonion  algebra is {\it alternative}, i.e. $  \{u,v,u\}=\{u,u,v\}=\{v,u,u\}=0$ for any $  u,v\in\mathbb O_{\mathbb F}$ by Theorem A.
 Define a    scalar product $\langle x,y\rangle $ on $\mathbb O_{\mathbb F}$ by 
\begin{align*}
   \langle  x,y \rangle:={\rm Re}\left(x\bar y\right).
\end{align*}
For ${  u=\left(\begin{matrix}u_1\\u_2\end{matrix}\right), v=\left(\begin{matrix}v_1\\v_2\end{matrix}\right)}\in\mathbb O_{\mathbb F}^2,$ define \begin{align*}
 \langle    u,  v \rangle: 
 =\sum_{i=1}^2{\rm Re}\left(  u_i\bar{   v}_i\right),
\end{align*}
and for $H=\left(H_{\bar ij}\right) 
\in\mathcal H_2(\mathbb O_{\mathbb F}),$ define 
\begin{equation}\begin{aligned}\label{inner}
\left\langle H,H'\right\rangle :=&\left\langle H_{\bar11},H'_{\bar11}\right\rangle +\left\langle H_{\bar22},H'_{\bar22}\right\rangle +\left\langle H_{\bar12},H'_{\bar12}\right\rangle +\left\langle H_{\bar21},H'_{\bar21}\right\rangle \\ =& H_{\bar11}H'_{\bar11}+ H_{\bar22}H'_{\bar22}+2{\rm Re}\left(H_{\bar12}H'_{\bar21}\right).
\end{aligned}\end{equation}

\begin{lem}\label{asso}
Let $  x, y, z\in$ $\mathbb O$ or $\mathbb O_{\mathbb C}$. Then\\
{\rm (i)} $  {\rm Re} (x) = {\rm Re} \left(\bar x\right);$\\{\rm (ii)} $  {\rm Re} (xy) = {\rm Re} (yx);$\\
{\rm (iii)}
 $  {\rm Re} ((xy)z) = {\rm Re} (x(yz))$ {\rm(}this  number will be denoted by $  {\rm Re} (xyz){\rm)};$\\
{\rm(iv)}  
Any subalgebra of $\mathbb O$ or $\mathbb O_{\mathbb C}$ generated by any two elements and their conjugates is associative. 
\end{lem}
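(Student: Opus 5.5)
The four items are either forced by bilinearity together with the multiplication rules (\ref{mult}), or are Theorem A transported to $\mathbb O_{\mathbb C}$. Since $\mathbb O_{\mathbb C}=\mathbb O\otimes_{\mathbb R}\mathbb C$ with $\mathbb C$-bilinear multiplication and $\mathbb C$-linear conjugation (\ref{conj}), every identity in (i)--(iii) is $\mathbb C$-multilinear in $x,y,z$ (conjugation being $\mathbb F$-linear). It therefore suffices to check (i)--(iii) on the basis elements $e_a$, and the real and complexified cases are handled simultaneously.

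For (i), ${\rm Re}(\bar x)=z_0={\rm Re}(x)$ directly from (\ref{conj}), since $\bar e_0=e_0$ and $\bar e_a=-e_a$ for $a\ge1$. For (ii), take $x=e_a$, $y=e_b$. If $a=b$ both sides agree trivially. If $a\neq b$ with one index $0$, the products are $\pm e_c$ with $c\neq0$, so both real parts vanish. If $a\neq b$ with both indices nonzero, then $e_ae_b=-e_be_a$ is a purely imaginary unit, so again both real parts are $0$.

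For (iii), take $x=e_a$, $y=e_b$, $z=e_c$. If the three lie in a quaternionic subalgebra, the triple product is associative and the claim is immediate. Otherwise the associator $\{e_a,e_b,e_c\}$ is purely imaginary, because it is a multiple of a basis unit outside the span of $1$. A cleaner route avoids case checking. Set $\langle x,y\rangle={\rm Re}(x\bar y)$ and show ${\rm Re}((xy)z)=\langle xy,\bar z\rangle$ and ${\rm Re}(x(yz))=\langle x,\overline{yz}\rangle=\langle x,\bar z\bar y\rangle$. Then invoke the standard composition-algebra identity $\langle xy,w\rangle=\langle x,w\bar y\rangle$, which follows by polarizing $|xy|^2=|x|^2|y|^2$. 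This identity extends to $\mathbb O_{\mathbb C}$ by $\mathbb C$-bilinearity: the polynomial identity holds on the real points, hence holds identically.

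For (iv), the subalgebra generated by $u,v,\bar u,\bar v$ equals the one generated by $u,v$ and $1$, since $\bar u=2{\rm Re}(u)-u$ and ${\rm Re}(u)$ is a scalar. Because $\mathbb O$ and $\mathbb O_{\mathbb C}$ are alternative, Theorem A gives associativity of the subalgebra generated by $u,v$. Adjoining the identity, which associates with everything, preserves associativity.

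The main obstacle I expect is justifying alternativity and the composition-identity polarization over $\mathbb O_{\mathbb C}$, where $|x|^2$ is a complex quadratic form rather than a norm. I would handle this by noting that both are polynomial identities in the real coordinates, valid on $\mathbb O=\mathbb R^8$, and hence valid after complexification, which is the argument deferred to the Appendix.
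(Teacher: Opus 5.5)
Your proposal is correct. It differs from the paper mainly in (iii) and in how the complexified case is obtained.

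The paper takes the real case from Alesker--Gordon and then re-proves everything inside $\mathbb O_{\mathbb C}$ in the Appendix. For (iii) it shows the associator is purely imaginary, $\overline{\{x,y,z\}}=-\{\bar z,\bar y,\bar x\}=\{z,y,x\}=-\{x,y,z\}$. This uses only three facts: conjugation is an anti-involution; the associator vanishes when an argument is a scalar, so it changes sign when an argument is conjugated; and the associator is alternating (Lemma \ref{alem}, obtained by polarizing alternativity).

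You instead set $w=\bar z$ in the composition identity $\langle xy,w\rangle=\langle x,w\bar y\rangle$, which comes from polarizing $|xy|^2=|x|^2|y|^2$. The paper's argument needs only alternativity, so it runs verbatim in $\mathbb O_{\mathbb C}$, where $|\cdot|^2$ is merely a complex quadratic form. Yours relies on the norm-multiplicativity of $\mathbb O$, but your opening observation makes that cost nothing. Both sides of (i)--(iii) are $\mathbb C$-multilinear, since conjugation is $\mathbb C$-linear, and the real units $e_a$ form a $\mathbb C$-basis of $\mathbb O_{\mathbb C}$. Hence the complexified statements follow at once from the real ones. So the obstacle you anticipate, justifying the composition identity over $\mathbb O_{\mathbb C}$, never actually arises, and the basis-triple case analysis you sketch first for (iii) can be dropped.

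Only (iv), which is not a multilinear statement, genuinely needs an argument in $\mathbb O_{\mathbb C}$. There both proofs rest on $\bar u=2{\rm Re}(u)-u$. You finish more completely: you transfer alternativity to $\mathbb O_{\mathbb C}$ as a polynomial identity and apply Artin's theorem (Theorem A) to the subalgebra generated by $1,u,v$. The Appendix only checks a few identities such as $(\bar xy)y=\bar x(yy)$ and leaves the rest to ``similarly''.
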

See  \cite[Lemma 2.1, 2.2]{AG}  for this lemma for real octonions, we will prove the  complexified version  in Appendix \ref{app}.
\begin{prop}
\begin{align}\partial_j\partial_{\bar j}=\partial_{\bar j}\partial_j=\overleftarrow{\partial_j}\overleftarrow{\partial_{\bar j}}=\overleftarrow{\partial_{\bar j}}\overleftarrow{\partial_j}=\Delta_j.\end{align}
\end{prop}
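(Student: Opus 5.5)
We expand both operators in the real coordinates $x_j^0,\dots,x_j^7$ and compare coefficients of the second derivatives. Take $\partial_j\partial_{\bar j}$ acting on an $\mathbb O$-valued smooth function $F$. By definition
\begin{align*}
\partial_j\partial_{\bar j}F=\sum_{a,b=0}^7 \bar e_a\left(e_b\frac{\partial^2 F}{\partial x_j^a\partial x_j^b}\right).
\end{align*}
The left multiplications are not associative with a general octonion, so the key algebraic input is that for each fixed pair $a,b$ the elements $\bar e_a, e_b$ and $G=\partial^2F/\partial x_j^a\partial x_j^b$ need not generate an associative subalgebra. We therefore do not regroup; instead we split the double sum into the diagonal terms $a=b$ and the symmetrized off-diagonal pairs.

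For the diagonal terms we use alternativity (Theorem A, or Lemma \ref{asso}(iv) with the two generators $e_a$ and $G$). Since $\bar e_a$ is a real linear combination of $e_a$ and $1$, we get $\bar e_a(e_a G)=(\bar e_a e_a)G=|e_a|^2G=G$. Summing over $a$ gives $\Delta_j F$.

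For $a\neq b$ we pair the $(a,b)$ and $(b,a)$ terms using the symmetry of mixed partials, which yields $\left(\bar e_a(e_b\,\cdot)+\bar e_b(e_a\,\cdot)\right)G$. We then show this operator vanishes, treating three cases.
\begin{itemize}
\item[(i)] If $a,b\ge1$, then $\bar e_a=-e_a$, and $e_a(e_bG)+e_b(e_aG)$ is the linearization of the left alternative law $e_c(e_cG)=(e_ce_c)G$ at $c=a,b$. Polarizing $x(xG)=(xx)G$ with $x=e_a+e_b$ gives $e_a(e_bG)+e_b(e_aG)=(e_ae_b+e_be_a)G=0$ by (\ref{mult}).
\item[(ii)] If $a=0$ and $b\ge1$, the pair is $e_bG+\bar e_bG=0$.
\item[(iii)] The case $b=0$ and $a\ge1$ is the same as (ii).
\end{itemize}
Hence $\partial_j\partial_{\bar j}=\Delta_j$.

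The identity $\partial_{\bar j}\partial_j=\Delta_j$ follows by swapping the roles of $e$ and $\bar e$; the same polarization argument applies since $e_a=-\bar e_a$ for $a\ge1$. The right-operator versions follow from the right alternative law $(Gx)x=G(xx)$ and its polarization $(Ge_a)e_b+(Ge_b)e_a=G(e_ae_b+e_be_a)$. We read $F\overleftarrow{\partial_j}\overleftarrow{\partial_{\bar j}}$ as $\sum_{a,b}\left(\partial_{x_j^a}\partial_{x_j^b}F\,\bar e_a\right)e_b$. Alternatively, we can apply conjugation, since $\overline{\partial_{\bar j}F}=\bar F\overleftarrow{\partial_j}$ and $\overline{xy}=\bar y\bar x$.

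The only potential obstacle is the nonassociativity. Polarized alternativity handles it cleanly, so the main care point is consistent bookkeeping of the order of the operators.
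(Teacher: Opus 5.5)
Your proof is correct and follows the same route as the paper. Both expand in the real coordinates $x_j^a$: the diagonal terms give $\Delta_j$, the cross terms with $a\neq b\ge 1$ cancel because $e_ae_b=-e_be_a$, and the terms involving $e_0$ cancel in pairs. The one difference is that the paper's one-line computation writes the composition as multiplication by $\bar e_be_a$, which silently regroups $\bar e_b(e_aG)$ as $(\bar e_be_a)G$. That regrouping is false term by term in $\mathbb O$ and only becomes valid after pairing $(a,b)$ with $(b,a)$, so your polarized-alternativity step supplies the justification the paper omits.
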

\begin{proof}
Note that 
\begin{equation}\begin{aligned}
\partial_j\partial_{\bar j} =\sum_{a,b=0}^7 \bar e_be_a\frac{\partial^2 }{\partial x_j^b\partial x_j^a}=\frac{\partial^2 }{\partial x_j^0\partial x_j^0}-\sum_{a,b=1}^7   e_be_a\frac{\partial^2 }{\partial x_j^b\partial x_j^a} 
=\Delta_j ,
\end{aligned}\end{equation}by  (\ref{mult}).  
Similarly, we can prove the other cases.
\end{proof}

\subsection{The formal adjoint operators}
 We have  the following formulae of integration by parts for ${\partial_{\bar j}},{\partial_{  j}},\overleftarrow{\partial_{\bar j}},\overleftarrow{\partial_{ j}}.$
\begin{prop}\label{ibp}Suppose that $\Omega$ is a bounded domain in $\mathbb O^{2}.$ For any $f\in C^1\left(\overline \Omega,\mathbb O\right)$ and $h\in C^1\left(\overline \Omega,\mathbb O\right),$ we have
\begin{equation}\begin{aligned}\label{bp}
\int_{\Omega}\left\langle{\partial_{\bar j}}f,h\right\rangle {\rm d}V=&-\int_\Omega\left\langle f, {\partial_{  j}}h\right\rangle {\rm d}V+\int_{\partial \Omega}\left\langle \rho_{\bar j}\cdot f, h \right\rangle \frac{{\rm d}S}{|{\rm grad}\rho|},\\ \int_{\Omega}\left\langle{\partial_{ j}}f,h\right\rangle {\rm d}V=&-\int_\Omega\left\langle f, {\partial_{\bar  j}}h\right\rangle {\rm d}V+\int_{\partial \Omega}\left\langle \rho_{  j}\cdot f, h \right\rangle \frac{{\rm d}S}{|{\rm grad}\rho|},\\ \int_{\Omega}\left\langle f\overleftarrow{\partial_{ j}},h\right\rangle {\rm d}V=&-\int_\Omega\left\langle f, h\overleftarrow{\partial_{\bar j}}\right\rangle {\rm d}V+\int_{\partial \Omega}\left\langle f\cdot\rho_{  j}, h \right\rangle \frac{{\rm d}S}{|{\rm grad}\rho|},\\ \int_{\Omega}\left\langle f\overleftarrow{\partial_{\bar j}},h\right\rangle {\rm d}V=&-\int_\Omega\left\langle f, h\overleftarrow{\partial_{ j}}\right\rangle {\rm d}V+\int_{\partial \Omega}\left\langle f\cdot  \rho_{\bar j}, h \right\rangle \frac{{\rm d}S}{|{\rm grad}\rho|},
\end{aligned}\end{equation}where $\rho$ is a defining function of $\Omega$ and $$  \rho_j:=\partial_j\rho=\rho\overleftarrow{\partial_j},\quad \rho_{\bar j}:=\partial_{\bar j}\rho=\rho\overleftarrow{\partial_{\bar j}},\quad j=1,2.$$ 
\end{prop}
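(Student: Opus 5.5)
The proof is a componentwise application of the divergence theorem, combined with the algebraic identities of Lemma \ref{asso}. We prove the first identity in detail; the other three follow by the same argument, either by swapping $e_a$ with $\bar e_a$ or by putting the octonion units on the right.

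For fixed $a$ and $j$ we use the scalar integration by parts formula
\begin{align*}
\int_\Omega \frac{\partial g}{\partial x_j^a}\,{\rm d}V=\int_{\partial\Omega} g\,\nu_j^a\,{\rm d}S,\qquad \nu_j^a=\frac{1}{|{\rm grad}\rho|}\frac{\partial\rho}{\partial x_j^a},
\end{align*}
where $\nu$ is the outer unit normal and $g$ is real-valued. We apply it to
\begin{align*}
g={\rm Re}\left((e_af)\bar h\right).
\end{align*}
This function is real and $C^1$ on $\overline\Omega$. Since $e_a$ is constant, Leibniz's rule gives
\begin{align*}
\partial_{x_j^a}g={\rm Re}\left((e_a\partial_{x_j^a}f)\bar h\right)+{\rm Re}\left((e_af)\overline{\partial_{x_j^a}h}\right).
\end{align*}
Summing over $a$, the first terms add up to $\langle \partial_{\bar j}f,h\rangle$. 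The boundary terms combine, by linearity in the constant coefficients, into ${\rm Re}\big((\sum_a \rho_{x_j^a}e_a f)\bar h\big)/|{\rm grad}\rho|=\langle\rho_{\bar j}\cdot f,h\rangle/|{\rm grad}\rho|$. This combination is legitimate because $\sum_a\rho_{x_j^a}e_a$ is a real linear combination of the units, so no associativity question arises.

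The remaining step is to rewrite the second interior term. By Lemma \ref{asso}(iii) and (ii),
\begin{align*}
{\rm Re}\left((e_af)\overline{k}\right)={\rm Re}\left(f\,\overline{k}\,e_a\right)={\rm Re}\left(f\,\overline{\bar e_a k}\right),
\end{align*}
where $k=\partial_{x_j^a}h$. Here we used $\overline{\bar e_a k}=\bar k e_a$, which holds because conjugation is an anti-involution on $\mathbb O$. Lemma \ref{asso}(iii) is what lets us regroup the triple product inside ${\rm Re}$ even though $\mathbb O$ is not associative. Summing over $a$ gives ${\rm Re}\big(f\,\overline{\partial_j h}\big)=\langle f,\partial_jh\rangle$, with $\partial_j h=\sum_a\bar e_a\partial_{x_j^a}h$. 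Moving this term to the other side produces the first formula with the minus sign.

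For the right-handed operators the same computation works with $g={\rm Re}\left((fe_a)\bar h\right)$. The needed identity is ${\rm Re}\left((fe_a)\bar k\right)={\rm Re}\left(f\,\overline{k\bar e_a}\right)$, which again follows from Lemma \ref{asso}(iii) together with $\overline{k\bar e_a}=e_a\bar k$. Replacing $e_a$ by $\bar e_a$ gives the second and third formulas, and the identical pattern gives the fourth.

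The only real obstacle is the nonassociativity of $\mathbb O$. It appears at exactly one point, moving $e_a$ across $f$ inside the real part, and Lemma \ref{asso}(ii)–(iii) handle it.
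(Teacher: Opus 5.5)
Your proof is correct and follows essentially the same route as the paper: the divergence theorem applied componentwise to a real-valued function, plus Lemma~\ref{asso}(ii)--(iii) and $\overline{xy}=\bar y\,\bar x$ to move the constant unit $e_a$ inside ${\rm Re}$. The only difference is the order of steps: the paper first rewrites ${\rm Re}\big((e_a\partial_{x_j^a}f)\bar h\big)$ as ${\rm Re}\big(\partial_{x_j^a}f\cdot\bar h e_a\big)$ and then integrates by parts, while you integrate ${\rm Re}\big((e_af)\bar h\big)$ first and regroup afterwards.
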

\begin{proof} 
 It is easy to see that
\begin{equation*}\begin{aligned}
\int_\Omega\left\langle{\partial_{\bar j}}f,h\right\rangle {\rm d}V=&\sum_a  \int_{\Omega}{\rm Re}\left(e_a\frac{\partial f}{\partial x_j^a}\cdot \bar h\right){\rm d}V=\sum_a  \int_{\Omega}{\rm Re}\left(\frac{\partial f}{\partial x_j^a}\cdot \bar h e_a\right){\rm d}V\\=&-\sum_a  \int_{\Omega}{\rm Re}\left(f\cdot \frac{\partial\bar h}{\partial x_j^a} e_a\right){\rm d}V+\sum_a  \int_{\partial\Omega}{\rm Re}\left(\frac{\partial\rho}{\partial x_j^a}f\cdot \bar he_a\right)\frac{{\rm d}S}{|{\rm grad}\rho|}\\=&-\sum_a  \int_{\Omega}{\rm Re}\left(f\cdot \overline{\bar e_a\frac{\partial  h}{\partial x_j^a}}\right){\rm d}V+\sum_a  \int_{\partial\Omega}{\rm Re}\left(e_a\frac{\partial\rho}{\partial x_j^a}f\cdot \bar h\right)\frac{{\rm d}S}{|{\rm grad}\rho|}\\ =&- \int_\Omega\left\langle f, {\partial_{  j}}h\right\rangle {\rm d}V+\int_{\partial \Omega}\left\langle \rho_{\bar j}\cdot f, h \right\rangle \frac{{\rm d}S}{|{\rm grad}\rho|}, 
\end{aligned}\end{equation*}by  using Lemma \ref{asso} (ii)-(iii) repeatedly. Similarly, we can prove the other cases.
\end{proof}
Define  the formal adjoints of $\partial_{\bar j},\partial_{ j},\overleftarrow{\partial_j},\overleftarrow{\partial_{\bar j} }$ by \begin{equation*}\begin{aligned}
\int_{\mathbb O^2}\left\langle f,{\partial_{\bar j}^*}g\right\rangle {\rm d}V:=&\int_{\mathbb O^2}\left\langle{\partial_{\bar j}}f,g\right\rangle {\rm d}V,\\ \int_{\mathbb O^2}\left\langle f,{\partial_{  j}^*}g\right\rangle {\rm d}V:=&\int_{\mathbb O^2}\left\langle{\partial_{  j}}f,g\right\rangle {\rm d}V,\\\int_{\mathbb O^2}\left\langle f,g{\left(\overleftarrow{\partial_{\bar j}}\right)^*}\right\rangle {\rm d}V:=&\int_{\mathbb O^2}\left\langle f{ \overleftarrow{\partial_{\bar j}} },g\right\rangle {\rm d}V,\\ \int_{\mathbb O^2}\left\langle f,g{\left(\overleftarrow{\partial_{ j}}\right)^*}\right\rangle {\rm d}V:=&\int_{\mathbb O^2}\left\langle f{ \overleftarrow{\partial_{j}} },g\right\rangle {\rm d}V,\end{aligned}\end{equation*}for $f,g\in C_c^1\left(\mathbb O^2,\mathbb O\right).$ Then we have the following corollary directly by Proposition \ref{ibp}.

 \begin{cor}The formal adjoint of $\partial_{\bar j},\partial_{ j},\overleftarrow{\partial_{j} },\overleftarrow{\partial_{\bar j} }$ are 
 \begin{align}\label{adj}
\partial_{\bar j}^*=-{\partial_{ j}},\quad\partial_{j}^*=-{\partial_{\bar j}}, \quad \left(\overleftarrow{\partial_{j}}\right)^*= -\overleftarrow{\partial_{\bar j}},\quad \left(\overleftarrow{\partial_{\bar j}^*}\right)^*=-\overleftarrow{\partial_{j}}.
\end{align}
 \end{cor}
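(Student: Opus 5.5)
The corollary is a direct reading of Proposition \ref{ibp}, applied to compactly supported test functions. Take $f,g\in C_c^1\left(\mathbb O^2,\mathbb O\right)$ and choose a bounded domain $\Omega$ with smooth boundary (for instance a large ball) whose interior contains ${\rm supp} f\cup{\rm supp} g$. Apply each of the four identities in (\ref{bp}) on this $\Omega$. Every boundary integral contains $f$ and $h=g$ evaluated on $\partial\Omega$, where both vanish, so all boundary terms drop. Since the integrands vanish outside $\Omega$, the integrals over $\Omega$ equal the integrals over $\mathbb O^2$.

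From the first line of (\ref{bp}) this gives
\[
\int_{\mathbb O^2}\left\langle\partial_{\bar j}f,g\right\rangle {\rm d}V=\int_{\mathbb O^2}\left\langle f,-\partial_j g\right\rangle {\rm d}V
\quad\text{for all } f.
\]
By the defining relation of the formal adjoint, $\int\langle f,\partial_{\bar j}^*g\rangle=\int\langle f,-\partial_jg\rangle$ for every test $f$. The pairing $\langle\cdot,\cdot\rangle={\rm Re}(x\bar y)$ is the standard Euclidean inner product on $\mathbb O\cong\mathbb R^8$, so it is nondegenerate. Since $f$ is arbitrary in $C_c^1$, we conclude $\partial_{\bar j}^*g=-\partial_j g$.

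The second, third and fourth lines of (\ref{bp}) give, in exactly the same way, $\partial_j^*=-\partial_{\bar j}$, $\left(\overleftarrow{\partial_j}\right)^*=-\overleftarrow{\partial_{\bar j}}$ and $\left(\overleftarrow{\partial_{\bar j}}\right)^*=-\overleftarrow{\partial_j}$. (The last item in (\ref{adj}) has a stray asterisk and should be read this way.)

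There is no real obstacle here. The only nontrivial input is the octonionic bookkeeping in Proposition \ref{ibp}, which moves $e_a$ across the product using Lemma \ref{asso} (ii)--(iii), and that is already established. The remaining points are only to make sure the boundary terms vanish by support, and to invoke nondegeneracy of the real inner product to identify the adjoint uniquely.
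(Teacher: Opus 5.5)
Your proposal is correct and matches the paper, which obtains the corollary directly from Proposition \ref{ibp}: the boundary terms vanish for compactly supported functions, and the adjoint is then identified by nondegeneracy of the pairing. You also correctly read the last item as $\left(\overleftarrow{\partial_{\bar j}}\right)^*=-\overleftarrow{\partial_j}$, since the extra asterisk there is a typo.
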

 
Note that ${\partial_{ \bar 1}} {\partial_{ 2}}f={\partial_{ \bar 1}}f\overleftarrow{\partial_{ 2}}$ for $ \mathbb R$-valued function $f.$ In the sequel,   we will write $$Hess_{\mathbb O}(f)=\left(\begin{matrix}\Delta_1f &{\partial_{ \bar 1}} f\overleftarrow{{\partial_{ 2}}}\\{\partial_{ \bar 2}}f\overleftarrow{{\partial_{ 1}}}&\Delta_2f \end{matrix}\right)$$ for convenience.

\begin{prop}\label{Dstar}The  formal adjoint of $ \mathcal D_0$ and $\mathcal D_1 $ are given by
\begin{equation}
\begin{aligned}\label{ddd}
\mathcal D_0^*u =&\Delta_1u_{\bar11}+\Delta_2u_{\bar22} +2{\rm Re}\left({\partial_{2}}u_{\bar21} \overleftarrow{\partial_{ \bar 1}}\right),\\ \mathcal D_1^*v=&\left(\begin{matrix} {\rm Re}\left(   {\partial_{ 2}}v_2 \right) &-\frac{{{\partial_{\bar 1}}\bar v_2 }+{v_1\overleftarrow{\partial_{ 2}}}}{2} \\-\frac{v_2\overleftarrow{\partial_{ 1}}+ \partial_{\bar 2}\bar v_1 }{2} &{\rm Re}\left( {\partial_{ 1}}v_1\right)  \end{matrix}\right), \end{aligned}\end{equation} for  $u=\left(u_{\bar ij}  \right)\in C_c^\infty\left(\Omega,\mathcal H_2(\mathbb O)\right),$   $v=\left(\begin{matrix}v_1\\  v_2\end{matrix}\right)\in C_c^\infty\left(\Omega,\mathbb O^2\right),$
respectively.
\end{prop}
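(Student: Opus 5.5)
The plan is a direct computation. We pair each operator against test sections using the inner products of the previous subsection, then move all derivatives onto the second argument with the adjoint formulas (\ref{adj}). Since $u,v$ have compact support in $\Omega$, Proposition \ref{ibp} applies with no boundary terms. Throughout, the non-associativity is handled only through Lemma \ref{asso} (ii)--(iii): every term we need is the real part of a product of three octonions, and that real part can be rearranged cyclically and reassociated freely.

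\emph{Formula for $\mathcal D_0^*$.} Let $f$ be real-valued and $u\in C_c^\infty(\Omega,\mathcal H_2(\mathbb O))$. By (\ref{inner}),
\begin{align*}
\langle \mathcal D_0 f,u\rangle=\Delta_1f\,u_{\bar11}+\Delta_2f\,u_{\bar22}+2{\rm Re}\left(\left({\partial_{\bar1}}f\overleftarrow{\partial_2}\right)u_{\bar21}\right).
\end{align*}
The diagonal terms integrate by parts twice, since $\Delta_j$ is formally self-adjoint. For the off-diagonal term we write ${\partial_{\bar1}}f\overleftarrow{\partial_2}=\sum_{a,b}e_a\bar e_b\,\partial_{x_1^a}\partial_{x_2^b}f$. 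Because $f$ is real, $(e_a\bar e_b)u_{\bar21}$ has the same real part as $\bar e_b\,u_{\bar21}\,e_a$, by Lemma \ref{asso} (ii)--(iii). Integrating by parts twice moves both derivatives onto $u_{\bar21}$, and the sign is $(-1)^2=+1$. We then recognize $\sum_{a,b}\bar e_b(\partial_{x_2^b}\partial_{x_1^a}u_{\bar21})e_a={\partial_2}u_{\bar21}\overleftarrow{\partial_{\bar1}}$. Here there is no bracketing ambiguity, because left and right multiplication by the same pair commute up to an associator whose real part vanishes. This gives the stated $\mathcal D_0^*u$.

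\emph{Formula for $\mathcal D_1^*$.} Let $v\in C_c^\infty(\Omega,\mathbb O^2)$ and $u\in C_c^\infty(\Omega,\mathcal H_2(\mathbb O))$. Then
\begin{align*}
\langle\mathcal D_1u,v\rangle={\rm Re}\left(\left(u_{\bar12}\overleftarrow{\partial_{\bar2}}-{\partial_{\bar1}}u_{\bar22}\right)\bar v_1\right)+{\rm Re}\left(\left(u_{\bar21}\overleftarrow{\partial_{\bar1}}-{\partial_{\bar2}}u_{\bar11}\right)\bar v_2\right).
\end{align*}
Using (\ref{adj}) we get
\begin{align*}
\int{\rm Re}\left((u_{\bar12}\overleftarrow{\partial_{\bar2}})\bar v_1\right)&=-\int{\rm Re}\left(u_{\bar12}\,\overline{v_1\overleftarrow{\partial_2}}\right),\\
-\int{\rm Re}\left(({\partial_{\bar1}}u_{\bar22})\bar v_1\right)&=\int u_{\bar22}\,{\rm Re}\left({\partial_1}v_1\right),
\end{align*}
where the second line uses that $u_{\bar22}$ is real. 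The terms in $v_2$ are handled the same way.

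We must then rewrite the result in the form $\langle u,\mathcal D_1^*v\rangle$ with $\mathcal D_1^*v$ Hermitian. This is the main obstacle. The pairing (\ref{inner}) of $u$ with a Hermitian matrix $S$ counts the off-diagonal entry as $2{\rm Re}(u_{\bar12}S_{\bar21})$, whereas the computation above produces separate contributions from $u_{\bar12}$ and from $u_{\bar21}=\bar u_{\bar12}$. To match them, we convert the $u_{\bar21}$ term into a $u_{\bar12}$ term using ${\rm Re}(\bar x\bar y)={\rm Re}(yx)={\rm Re}(xy)$, from Lemma \ref{asso} (i)--(ii). For example,
\begin{align*}
{\rm Re}\left(u_{\bar21}\,\overline{v_2\overleftarrow{\partial_1}}\right)={\rm Re}\left(u_{\bar12}\,\left(v_2\overleftarrow{\partial_1}\right)\right),
\end{align*}
and similarly for the other off-diagonal terms. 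After averaging the two contributions, the $(\bar2,1)$ entry of the adjoint is $-\tfrac12\left(v_2\overleftarrow{\partial_1}+\partial_{\bar2}\bar v_1\right)$. Here we use that $\overline{{\partial_{\bar1}}\bar v_2}$ is obtained from ${\partial_{\bar1}}\bar v_2$ by reversing the order of the product (conjugation is an anti-automorphism), so the $(\bar1,2)$ entry is exactly the conjugate of the $(\bar2,1)$ entry and $\mathcal D_1^*v$ lies in $\mathcal H_2(\mathbb O)$. The diagonal entries come out as ${\rm Re}({\partial_2}v_2)$ and ${\rm Re}({\partial_1}v_1)$.

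The step most likely to cause trouble is bookkeeping the factor $\tfrac12$ and the conjugations in the off-diagonal entries. I will check it by verifying that the resulting matrix is Hermitian and that pairing it with $u$ reproduces all four off-diagonal terms.
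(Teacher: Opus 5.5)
Your proposal is correct and follows essentially the paper's route. You pair against test sections, move the derivatives across with (\ref{adj}), and use Lemma \ref{asso} (i)--(iii) to merge the $u_{\bar12}$ and $u_{\bar21}$ contributions into $\langle u_{\bar12},-(v_1\overleftarrow{\partial_2}+\partial_{\bar1}\bar v_2)\rangle$, with the factor $\tfrac12$ coming from the double counting of the off-diagonal entry in (\ref{inner}). Your componentwise handling of the $\mathcal D_0^*$ off-diagonal term, using that the associator has zero real part, also makes explicit a bracketing step the paper leaves implicit when it passes from $(\partial_1u_{\bar12})\overleftarrow{\partial_{\bar2}}+\partial_2(u_{\bar21}\overleftarrow{\partial_{\bar1}})$ to $2{\rm Re}(\partial_2u_{\bar21}\overleftarrow{\partial_{\bar1}})$.
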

\begin{proof}
For $f\in C_c^\infty\left(\Omega,\mathbb R\right),$ we have \begin{equation*}\begin{aligned}
\left(\mathcal D_0f,u\right)= 
&\int_{\mathbb O^2}\left\langle\Delta_1f, u_{\bar 1 1} \right\rangle+\left\langle{\partial_{ \bar 1}}f\overleftarrow{\partial_{ 2}}, u_{\bar 1 2} \right\rangle+\left\langle{\partial_{ \bar 2}}f\overleftarrow{\partial_{ 1}}, u_{\bar 21} \right\rangle+\left\langle\Delta_2f, u_{\bar 2 2} \right\rangle{\rm d}V\\=&\int_{\mathbb O^2}\left\langle f,\Delta_1u_{\bar11}  +\left({\partial_{ 1}}u_{\bar12}\right) \overleftarrow{\partial_{ \bar 2}}+{\partial_{ 2}}\left(u_{\bar21} \overleftarrow{\partial_{ \bar 1}}\right)+\Delta_2u_{\bar22} \right\rangle{\rm d}V\end{aligned}\end{equation*}by using (\ref{adj}). Thus (\ref{ddd}) follows. Similarly,
\begin{equation*}\begin{aligned} \left(\mathcal D_1u,v\right)=&\int_{\mathbb O^2}\left\langle u_{ \bar 1 2}\overleftarrow{\partial_{\bar 2}}-{\partial_{ \bar 1}}u_{\bar 22},v_1\right\rangle{\rm d}V+\int_{\mathbb O^2}\left\langle u_{ \bar 21}\overleftarrow{\partial_{ \bar 1}}-{\partial_{ \bar 2}}u_{\bar 1 1},v_2\right\rangle{\rm d}V\\=&\int_{\mathbb O^2}\left(\left\langle u_{\bar 1 2},-v_1\overleftarrow{\partial_{ 2}}\right\rangle + \left\langle u_{ \bar 2 2},{\partial_{ 1}}v_1\right\rangle  + \left\langle u_{ \bar 2 1},-v_2\overleftarrow{\partial_{ 1}}\right\rangle + \left\langle u_{ \bar 11},{\partial_{ 2}}v_2\right\rangle\right){\rm d}V,
\end{aligned}\end{equation*}
by using (\ref{adj}) again,
where 
\begin{equation*}\begin{aligned}\label{2.7}
\left\langle u_{ \bar 12},-v_1\overleftarrow{\partial_{ 2}}\right\rangle+\left\langle u_{ \bar 2 1},-v_2\overleftarrow{\partial_{ 1}}\right\rangle =&-{\rm Re}\left( u_{ \bar 1 2}\cdot\overline{v_1\overleftarrow{\partial_{ 2}}}\right)-{\rm Re}\left(\overline{u_{ \bar 1 2}}\cdot\overline{v_2\overleftarrow{\partial_{ 1}}}\right)\\=&-{\rm Re}\left( u_{ \bar 1 2}\cdot\overline{v_1\overleftarrow{\partial_{ 2}}}\right)-{\rm Re}\left( {v_2\overleftarrow{\partial_{  1}}}\cdot  {u_{ \bar 1 2}} \right) 
\\=&-{\rm Re}\left( u_{ \bar 1 2}\cdot\left(\overline{v_1\overleftarrow{\partial_{ 2}}} +v_2\overleftarrow{\partial_{ 1}}\right)\right)=\left\langle u_{\bar 1 2},-\left({v_1\overleftarrow{\partial_{  2}}} + \partial_{\bar 1}\bar v_2 \right)\right\rangle, 
\end{aligned}\end{equation*}
by using Lemma \ref{asso} (i)-(iii) and  $
\left\langle u_{ \bar 1 1},{\partial_{ 2}}v_2\right\rangle =  \left\langle u_{ \bar 1 1}, { \rm Re}\left({\partial_{  2}}v_2\right)   \right\rangle$ since $u_{\bar 11}$ is real.
Thus (\ref{ddd}) follows.  
The proposition is proved. 
\end{proof}
   
\begin{cor}\label{p23}
For any $f\in C^\infty\left(\mathbb O^2,\mathbb R\right),$ we have
\begin{align*}
\mathcal D_0^*\mathcal D_0f=\Delta^2 f,
\end{align*}
where $\Delta=\Delta_1+\Delta_2 $ is the standard Laplacian operator on $\mathbb O^{2}.$
\end{cor}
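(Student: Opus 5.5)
We substitute $u=\mathcal D_0 f$ into the formula for $\mathcal D_0^*$ in Proposition \ref{Dstar}. This gives
\begin{align*}
\mathcal D_0^*\mathcal D_0 f=\Delta_1\Delta_1 f+\Delta_2\Delta_2 f+2{\rm Re}\left({\partial_{2}}\left({\partial_{\bar 2}}f\overleftarrow{\partial_{1}}\right)\overleftarrow{\partial_{\bar 1}}\right).
\end{align*}
Since $\Delta^2=\Delta_1^2+\Delta_2^2+2\Delta_1\Delta_2$, it suffices to prove
\begin{align*}
{\rm Re}\left({\partial_{2}}\left({\partial_{\bar 2}}f\overleftarrow{\partial_{1}}\right)\overleftarrow{\partial_{\bar 1}}\right)=\Delta_1\Delta_2 f .
\end{align*}

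We expand in the basis. Since $f$ is real, we have
\begin{align*}
{\partial_{\bar 2}}f\overleftarrow{\partial_{1}}=\sum_{a,b}(e_a\bar e_b)\,\partial_{x_2^a}\partial_{x_1^b}f .
\end{align*}
This holds because $(e_a c)\bar e_b=e_a(c\bar e_b)$ trivially when $c$ is a real derivative factor. Applying the outer operators then gives
\begin{align*}
{\partial_{2}}\left({\partial_{\bar 2}}f\overleftarrow{\partial_{1}}\right)\overleftarrow{\partial_{\bar 1}}=\sum_{a,b,c,d}\left(\bar e_c(e_a\bar e_b)\right)e_d\,\partial_{x_2^c}\partial_{x_2^a}\partial_{x_1^b}\partial_{x_1^d}f .
\end{align*}
Taking real parts, Lemma \ref{asso}(ii)--(iii) lets us rewrite the coefficient as
\begin{align*}
{\rm Re}\left((\bar e_c(e_a\bar e_b))e_d\right)={\rm Re}\left((e_d\bar e_c)(e_a\bar e_b)\right),
\end{align*}
so associativity issues disappear inside the real part.

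The derivative factor is symmetric in $(a,c)$ and in $(b,d)$. So we only need the symmetrized coefficient
\begin{align*}
\tfrac14\sum{\rm Re}\left((e_d\bar e_c+e_b\bar e_c)(e_a\bar e_b)\right)
\end{align*}
over the swaps, or equivalently a symmetrization in each pair. The key identities are $e_a\bar e_c+e_c\bar e_a=2\delta_{ac}$ and $\bar e_b e_d+\bar e_d e_b=2\delta_{bd}$, which follow from (\ref{mult}).

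To use them, write ${\rm Re}(\bar e_c\, e_a\,\bar e_b\, e_d)$ as $\langle e_a\bar e_b,\ e_c\bar e_d\rangle$ (up to conjugation), using Lemma \ref{asso}(i)--(iii) and $\overline{xy}=\bar y\bar x$. Symmetrizing over $a\leftrightarrow c$ gives $\tfrac12\left(\langle e_a\bar e_b,e_c\bar e_d\rangle+\langle e_c\bar e_b,e_a\bar e_d\rangle\right)$. The polarized norm identity $\langle x y,z w\rangle+\langle z y,x w\rangle=2\langle x,z\rangle\langle y,w\rangle$ holds in any composition algebra; it comes from polarizing $|xy|=|x||y|$ twice. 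Applying it yields exactly $\delta_{ac}\delta_{bd}$.

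Hence the real part equals $\sum_{a,b}\partial_{x_2^a}^2\partial_{x_1^b}^2 f=\Delta_1\Delta_2 f$, and the claim follows.

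The main obstacle is the bookkeeping in this coefficient step. One has to check that the real part of the four-fold nonassociative product matches the inner product $\langle e_a\bar e_b,e_c\bar e_d\rangle$ with the correct placement of conjugates, and that the polarized composition identity is applied with the right pairing. If that route gets tangled, the fallback is to check it by cases: when $a=c$ and $b=d$ the coefficient is $1$; otherwise it is antisymmetric under one of the swaps, using $e_ae_c=-e_ce_a$ and alternativity (Theorem A) for the imaginary units.
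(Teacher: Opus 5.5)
Your proof is correct, but it takes a different route from the paper. The paper works at the operator level. It reassociates ${\rm Re}\bigl[\partial_2(\partial_{\bar 2}f\overleftarrow{\partial_1})\overleftarrow{\partial_{\bar 1}}\bigr]$ into ${\rm Re}\bigl[((\partial_2\partial_{\bar 2}f)\overleftarrow{\partial_1})\overleftarrow{\partial_{\bar 1}}\bigr]$ ``by Lemma~\ref{asso}'', and then applies $\partial_2\partial_{\bar 2}=\Delta_2$ and $\overleftarrow{\partial_1}\overleftarrow{\partial_{\bar 1}}=\Delta_1$.

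That argument is short, but its reassociation step quietly uses alternativity. The associator $\{\bar e_c,e_a,\bar e_b\}$ is antisymmetric in $(a,c)$, and it is contracted against the symmetric tensor $\partial_{x_2^c}\partial_{x_2^a}$, so it drops out. Equivalently, for real $f$ one has $\partial_{\bar 2}f\overleftarrow{\partial_1}=\partial_{\bar 2}(f\overleftarrow{\partial_1})$, and $\partial_2\partial_{\bar 2}=\Delta_2$ holds on $\mathbb O$-valued functions.

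You avoid reassociation entirely. Lemma~\ref{asso}(ii)--(iii) turns each coefficient into the inner product $\langle e_a\bar e_b,e_c\bar e_d\rangle$. Then the polarized composition identity evaluates its $(a,c)$-symmetrization as $\delta_{ac}\delta_{bd}$; this identity rests on multiplicativity of the norm rather than on alternativity. The result is more computational but fully explicit: it needs only the trace identities and $|xy|=|x||y|$.

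Three small remarks on the write-up:
\begin{itemize}
\item The hedge ``up to conjugation'' is unnecessary. The coefficient equals $\langle e_a\bar e_b,e_c\bar e_d\rangle$ exactly, by $\overline{xy}=\bar y\bar x$ and (ii).
\item The displayed line $\tfrac14\sum{\rm Re}((e_d\bar e_c+e_b\bar e_c)(e_a\bar e_b))$ is garbled. The $b\leftrightarrow d$ swap should produce ${\rm Re}((e_b\bar e_c)(e_a\bar e_d))$.
\item That line is not needed anyway, since symmetrizing in $(a,c)$ alone already gives $\delta_{ac}\delta_{bd}$.
\end{itemize}
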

\begin{proof}
By (\ref{ddd}), we have
\begin{equation}\begin{aligned}\label{dsd}
\mathcal D_0^*\mathcal  D_0 f=&\mathcal D_0^*\left(\begin{matrix}\Delta_1f &{\partial_{ \bar 1}}f\overleftarrow{\partial_{ 2}}\\{\partial_{ \bar 2}}f\overleftarrow{\partial_{ 1}}&\Delta_2f \end{matrix}\right)\\=& \Delta_1^2f + \Delta_2^2f +2{\rm Re}\left(\partial_2\left({\partial_{\bar 2 }}f\overleftarrow{\partial_{  1}}\right)  \overleftarrow{\partial_{\bar1}}\right) \\=&\left(\Delta_1^2+\Delta_2^2+\Delta_2\Delta_1+\Delta_1 \Delta_2\right)f=\Delta^2f,
\end{aligned}\end{equation}
since
 $${\rm Re}\left[\partial_2\left({\partial_{\bar 2 }}f\overleftarrow{\partial_{  1}}\right)  \partial_{\bar1}\right]={\rm Re}\left[\left(\left(\partial_2 {\partial_{\bar 2 }}f\right)\overleftarrow{\partial_{  1}}\right)  \partial_{\bar1}\right]={\rm Re}\left[\left( \Delta_2f \overleftarrow{\partial_{  1}}\right)  \partial_{\bar1}\right]=\Delta_2\Delta_1f,$$ by applying  Lemma \ref{asso}.
The corollary  is proved.
\end{proof}
\begin{cor}\label{p34}
An OPH  function    $f $ on a domain $\Omega\subset\mathbb O^{2}$ is real analytic.
\end{cor}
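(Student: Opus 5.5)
The plan is to reduce the statement to a classical fact about polyharmonic functions, using Corollary \ref{p23}. Let $f$ be OPH on $\Omega$, so $\mathcal D_0 f=0$ on $\Omega$. If $f$ is known to be smooth (say $C^4$), then applying the formal adjoint $\mathcal D_0^*$ from Proposition \ref{Dstar} pointwise gives
\begin{align*}
\Delta^2 f=\mathcal D_0^*\mathcal D_0 f=0 \quad \text{in } \Omega .
\end{align*}
Here $\Delta=\Delta_1+\Delta_2$ is the standard Laplacian on $\mathbb O^2\cong\mathbb R^{16}$. The identity of Corollary \ref{p23} is purely algebraic: it is an identity between constant-coefficient differential operators applied to $f$. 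It therefore holds locally on $\Omega$, and not only for functions on all of $\mathbb O^2$, because the computation in (\ref{dsd}) never uses integration. Hence $f$ is biharmonic on $\Omega$.

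The next step is to invoke ellipticity. The bilaplacian $\Delta^2$ is an elliptic operator with constant (in particular analytic) coefficients; its symbol is $|\xi|^4\neq0$ for $\xi\neq0$. Every distributional solution of $\Delta^2 f=0$ on an open set is therefore real analytic, by the analytic hypoellipticity of elliptic operators with analytic coefficients (Petrowsky's theorem; cf. H\"ormander). A more elementary route is also available. Since $g:=\Delta f$ is harmonic, it is real analytic. Locally one can write $f=h+w$ with $h$ harmonic and $w$ an explicit analytic particular solution of $\Delta w=g$, for instance $w=\tfrac{1}{2}\sum_j x_j^a\,\partial^{-1}$-type constructions. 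Alternatively one can use the Almansi expansion $f=h_0+|\mathbf x|^2h_1$ on balls. Either way, $f$ is real analytic.

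The main point requiring care is the regularity class in which "OPH" is understood. If OPH is defined for $C^2$ functions with $\mathcal D_0f=0$ classically, I would interpret $\mathcal D_0 f=0$ in the sense of distributions. I would then note that $\mathcal D_0^*\mathcal D_0=\Delta^2$ still holds as an identity of operators on distributions, since both sides are constant-coefficient operators that agree on smooth functions. Thus $\Delta^2 f=0$ in $\mathcal D'(\Omega)$, and Weyl-type hypoellipticity gives smoothness and then analyticity. I expect no genuine obstacle beyond this bookkeeping. The nonassociativity issue was already handled in Corollary \ref{p23} via Lemma \ref{asso}.
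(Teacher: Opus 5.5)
Your proposal is correct and is essentially the paper's own argument. You apply $\mathcal D_0^*$ to $\mathcal D_0f=0$, use the computation of Corollary \ref{p23} to get $\Delta^2 f=0$ in the sense of distributions, and conclude real analyticity from elliptic regularity of the bilaplacian; your remarks that the identity is local and extends to distributions are a welcome clarification, though the ``$\partial^{-1}$-type'' elementary alternative is too vague to count as an argument and is not needed.
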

\begin{proof}
By (\ref{dsd}), we have $\mathcal D_0^*\mathcal D_0f=\Delta^2 f=0,$
in the sense of distributions. Thus $f$ is biharmonic and so it is  real analytic at each point of  $\Omega.$
\end{proof}

 \section{Octonionic Hessian complex and its ellipticity}
\subsection{Octonionic Hessian complex} 
For a $\mathbb O$-valued function  $f\in L^1\left(\mathbb O^{2},\mathbb O\right),$ its Fourier transform  is defined by 
$$\hat f(\xi)=\int_{\mathbb O^2}f(x)e^{-\mathbf i\langle x, \xi\rangle}{\rm d}x,$$ for $\xi=\left(\xi_1,\xi_2\right)\in \mathbb O^2.$ Here $\hat f$ viewed as an $\mathbb O_{\mathbb C}$-valued function. %where $x\cdot\xi:=\sum_{i=1}^2\sum_a  x_j^a\xi_j^a.$
Denote \begin{align}
\boldsymbol{\xi}_{ j}:=\mathbf i \sum_a    \bar e_a\xi_j^a\in\mathbb O_{\mathbb C}.
\end{align}
Note that any element of $ \mathbb O_{\mathbb C}$ is complex linear, i.e. $\mathbf i$ commutes   each $e_a.$ By definition (\ref{conj}) of conjugation  in $\mathbb O_{\mathbb C}$,   $\overline{\boldsymbol{\xi}}_{ j}=\mathbf i \sum_a      e_a\xi_j^a.$
\begin{lem}\label{fourier}
Suppose $F\in C_c^1\left(\mathbb O^{2},\mathbb O\right),\xi\in\mathbb O^2$ we have 
\begin{equation}\begin{aligned}
\widehat{ \partial_jF }(\xi)=& {\boldsymbol{\xi}}_j\hat F(\xi),\quad \widehat{ \partial_{\bar j}F }(\xi)= \bar{\boldsymbol{\xi}}_j\hat F(\xi),\\\widehat{ F\overleftarrow{\partial_j} }(\xi)=& \hat F(\xi){\boldsymbol{\xi}}_j,\quad\widehat{ F\overleftarrow{\partial_{\bar j}} }(\xi)= \hat F(\xi)\bar{\boldsymbol{\xi}}_j.
\end{aligned}\end{equation}
In particular,   $\partial_{ j} F=0,$ $\partial_{\bar j} F=0,$ $F\overleftarrow{\partial_{ j}}=0$ and $F\overleftarrow{\partial_{\bar j}}=0 $ are equivalent to   $ {\boldsymbol{\xi}_{ j}} \hat F(\xi)=0,$ $\bar{\boldsymbol{\xi}}_{ j} \hat F(\xi)=0,$ $ \hat F(\xi) { {\boldsymbol{\xi}}_{ j}}=0$ and $\hat F(\xi) {\bar{\boldsymbol{\xi}}_{ j}}=0,$ respectively.
\end{lem}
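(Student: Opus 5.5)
This is a direct computation with the Fourier transform, differentiating under the integral and then integrating by parts. Since $F$ is compactly supported and $C^1$, for each real coordinate $x_j^a$ we have
\[
\widehat{\partial F/\partial x_j^a}(\xi)=\mathbf i\,\xi_j^a\,\hat F(\xi).
\]
The boundary terms vanish because of the compact support, and the scalar $\mathbf i\xi_j^a$ is complex, so it commutes with every $e_a$ in $\mathbb O_{\mathbb C}$. This scalar identity is the only analytic input needed.

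Now apply the definitions. We have $\partial_j F=\sum_a \bar e_a\,\partial F/\partial x_j^a$, and multiplication by a fixed octonion $\bar e_a$ is $\mathbb R$-linear. It therefore commutes with the integral defining the Fourier transform, with the complex scalar $e^{-\mathbf i\langle x,\xi\rangle}$ factored out using complex linearity in $\mathbb O_{\mathbb C}$. This gives
\[
\widehat{\partial_jF}(\xi)=\sum_a \bar e_a\,(\mathbf i\xi_j^a\hat F(\xi)).
\]
Because $\mathbf i\xi_j^a$ is a central scalar, this equals
\[
\Bigl(\mathbf i\sum_a\bar e_a\xi_j^a\Bigr)\hat F(\xi)=\boldsymbol\xi_j\hat F(\xi).
\]
No associativity issue arises here, since we only multiply by a single octonion $\bar e_a$ combined with a central scalar, and multiplication distributes over the sum. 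The other three formulas follow the same way. Replacing $\bar e_a$ by $e_a$ gives $\overline{\boldsymbol\xi}_j$, and for the right operators $\overleftarrow{\partial_j}$ and $\overleftarrow{\partial_{\bar j}}$ the factor appears on the right, giving $\hat F(\xi)\boldsymbol\xi_j$ and $\hat F(\xi)\bar{\boldsymbol\xi}_j$.

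For the equivalences, injectivity of the Fourier transform (applied componentwise to the 16 real, or 8 complex, components) shows that, say, $\partial_jF=0$ if and only if $\widehat{\partial_jF}\equiv0$, which by the first part holds if and only if $\boldsymbol\xi_j\hat F(\xi)=0$ for all $\xi$. The other three cases are identical.

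The only point needing care is that scalar multiplication by $\mathbf i$ and by real numbers commutes and associates with octonion multiplication in $\mathbb O_{\mathbb C}=\mathbb O\otimes\mathbb C$. This holds by construction of the complexification, and it is what lets us pull $\mathbf i\xi_j^a$ outside. Everything else is routine.
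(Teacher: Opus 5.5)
Your proof is correct and follows the same route as the paper. Integration by parts gives $\widehat{\partial F/\partial x_j^a}=\mathbf i\xi_j^a\hat F$, and since the complex scalar $\mathbf i\xi_j^a$ is central in $\mathbb O_{\mathbb C}$, you can collect $\sum_a \mathbf i\bar e_a\xi_j^a$ into $\boldsymbol\xi_j$; the other three operators are handled the same way. Your appeal to injectivity of the Fourier transform on the compactly supported continuous functions $\partial_jF$, etc., also makes explicit the ``in particular'' equivalences, which the paper leaves unstated.
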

\begin{proof}
Note that for any $\xi\in\mathbb O^2$\begin{equation*}\begin{aligned}
\widehat{ \partial_jF }(\xi)=&\sum_a  \int_{\mathbb O^2} \bar e_a\partial_{x_j^a}F(x)e^{-\mathbf i\langle x, \xi\rangle}{\rm d}x\\=&\sum_a  \mathbf i \bar e_a\xi_j^a\int_{\mathbb O^2}F(x)e^{-\mathbf i\langle x, \xi\rangle}{\rm d}x=   {\boldsymbol{\xi}}_j\hat F(\xi),
\end{aligned}\end{equation*}
and 
\begin{equation*}\begin{aligned}
\widehat{ F\overleftarrow{\partial_{\bar i}} }(\xi)=&\int_{\mathbb O^2} F(x)\overleftarrow{\partial_{\bar i}}e^{-\mathbf i\langle x, \xi\rangle}{\rm d}x=\sum_a  \int_{\mathbb O^2} \partial_{x_j^a}F(x)  e_a e^{-\mathbf i\langle x, \xi\rangle}{\rm d}x\\=&\sum_a  \int_{\mathbb O^2}F(x)e^{-\mathbf i\langle x, \xi\rangle}{\rm d}x\cdot\mathbf i   e_a\xi_j^a=  \hat F(\xi)\overline{\boldsymbol{\xi}}_j.
\end{aligned}\end{equation*}
 We can prove the remaining identities similarly. 
\end{proof}
Define the symbol $\sigma_j(\xi)$ of differential operator $\mathcal D_j$ at $\xi\in\mathbb O^2\setminus\{0\}$ by $$\sigma_j(\xi)\hat f :=\widehat{\mathcal D_jf} (\xi),$$ for $f\in C_0^\infty\left(\mathbb O^2,\mathcal V_j\right).$  Then by Lemma \ref{fourier}, we have 
\begin{equation}\begin{aligned}\label{sigma01}
\sigma_0( {\xi}){\hat f}=&\left(\begin{matrix} \left|\boldsymbol{\xi}_1\right|^2{\hat f} &\bar{\boldsymbol{\xi}}_1{\hat f} {\boldsymbol{\xi}}_2\\ \bar{\boldsymbol{\xi}}_2{\hat f} {\boldsymbol{\xi}}_1& \left|\boldsymbol{\xi}_2\right|^2{\hat f}\end{matrix}\right),  &&{\rm for}\  {\hat f}\in\mathbb C,\\ \sigma_1(\xi){\hat u}=&\left(\begin{matrix}{\hat u}_{\bar 1 2}  \bar{\boldsymbol{\xi}}_2-  \bar{\boldsymbol{\xi}}_{1}{\hat u}_{\bar 22}\\ {\hat u}_{\bar 21} \bar{\boldsymbol{\xi}}_1-  \bar{\boldsymbol{\xi}}_{2}{\hat u}_{ \bar 1 1}\end{matrix}\right), &&{\rm for}\ {\hat u}=\left(\begin{matrix}{\hat u}_{\bar 11}&{\hat u}_{\bar 12}\\{\hat u}_{ \bar 2 1}&{\hat u}_{\bar 22}\end{matrix}\right)\in\mathcal H_2(\mathbb O_{\mathbb C}),\end{aligned}\end{equation} 
and 
\begin{equation}\begin{aligned}\label{sigma234}\sigma_2(\xi){\hat v}= & \left(\begin{matrix}\left(\bar{\boldsymbol{\xi}}_1 {\boldsymbol{\xi}}_2\right){\hat v}_2 -\left(\bar{\boldsymbol{\xi}}_1\hat{\bar v}_2 \right) \bar{\boldsymbol{\xi}}_2+2{\rm Im}\left(    \hat v_1 { {\boldsymbol{\xi}}_1}\right)\cdot {\bar{\boldsymbol{\xi}}_{1}}
\\
\left(\bar{\boldsymbol{\xi}}_2 {\boldsymbol{\xi}}_1\right) {\hat v}_1-\left(\bar{\boldsymbol{\xi}}_2\hat{\bar v}_1\right) \bar{\boldsymbol{\xi}}_1+ 2{\rm Im}\left(    \hat v_2 { {\boldsymbol{\xi}}_2}\right)\cdot {\bar{\boldsymbol{\xi}}_{2}}\end{matrix}\right),   &&{\rm for}\ {\hat v}=\binom{{\hat v}_1}{{\hat v}_2} \in\mathbb O_{\mathbb C}^2, \\
\sigma_3(\xi){\hat t}=& \left(\begin{matrix} {\rm Re}\left({\boldsymbol{\xi}}_2{\hat t}_2\right) &-\frac{{\bar{\boldsymbol{\xi}}}_1 \hat{\bar t}_2 +{\hat t}_1 {\boldsymbol{\xi}}_2}{2}\\-\frac{{\hat t}_2 {\boldsymbol{\xi}}_1 +\bar{\boldsymbol{\xi}}_2\hat{\bar t}_1}{2}&{\rm Re}\left( {\boldsymbol{\xi}}_1{\hat t}_1 \right)\end{matrix}\right),  &&{\rm for}\ {\hat t}=\binom{\hat t_1} {\hat t_2} 
\in\mathbb O_{\mathbb C}^2,\\\sigma_4(\xi){\hat s}=&\left|\boldsymbol{\xi}_1\right|^2 {\hat s}_{\bar11}+\left|\boldsymbol{\xi}_2\right|^2{\hat s}_{\bar22} +2{\rm Re}\left( {\boldsymbol{\xi}}_2{\hat s}_{\bar21}\bar{\boldsymbol{\xi}}_1 \right), &&{\rm for}\  {\hat s}\in\mathcal H_2\left(\mathbb O_{\mathbb C}\right).
\end{aligned}\end{equation}
Here $$\left|\boldsymbol{\xi}_i\right|^2:= \overline{\boldsymbol{\xi}}_i\boldsymbol{\xi}_i =-\sum_a  \left(\xi_j^a\right)^2\in\mathbb C.$$
 
\begin{prop}\label{complex}
The sequence  {\rm(\ref{co})} with operators given by {\rm (\ref{D0}), (\ref{D1})} and {\rm(\ref{D})-(\ref{D34})} is a   differential complex, i.e. 
\begin{align}
\mathcal D_{j+1}\circ\mathcal D_j=0,
\end{align}for $j=0,\dots,3.$
\end{prop}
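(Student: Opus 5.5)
The plan is to verify the identities at the level of symbols. All the $\mathcal D_j$ have constant coefficients, so by Lemma \ref{fourier} the relation $\mathcal D_{j+1}\circ\mathcal D_j=0$ is equivalent to $\sigma_{j+1}(\xi)\circ\sigma_j(\xi)=0$ on the complexified fibres, for every $\xi\in\mathbb O^2$. Here the symbols are those in (\ref{sigma01})--(\ref{sigma234}), and all products are computed in $\mathbb O_{\mathbb C}$, where Lemma \ref{asso} and Theorems A, B remain available. Since $\sigma_j(\xi)$ depends polynomially on $\xi$, an identity in $\xi$ valid for all real $\xi$ is all that is needed.

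I will reduce the number of cases by duality. By Proposition \ref{Dstar}, $\mathcal D_3=\mathcal D_1^*$ and $\mathcal D_4=\mathcal D_0^*$. Hence $\mathcal D_4\mathcal D_3=(\mathcal D_1\mathcal D_0)^*$, so the case $j=3$ follows from $j=0$. For $j=2$, I first check by the integration by parts formulas (\ref{adj}) together with Lemma \ref{asso} (i)--(iii) that $\mathcal D_2$ is formally self-adjoint. The terms $(\partial_{\bar1}\partial_2)v_2-(\partial_{\bar1}\bar v_2)\overleftarrow{\partial_{\bar2}}$ should pair with the corresponding terms in the second row, and the term $2{\rm Im}(v_1\overleftarrow{\partial_1})\overleftarrow{\partial_{\bar1}}$ should be symmetric on its own. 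Self-adjointness then gives $\mathcal D_3\mathcal D_2=\mathcal D_1^*\mathcal D_2^*=(\mathcal D_2\mathcal D_1)^*$, so $j=2$ follows from $j=1$. If self-adjointness turns out to fail, I will fall back to a direct symbol computation for $j=2$, in the same style as the one for $j=1$ below.

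The case $j=0$ is short. The first entry of $\sigma_1\sigma_0\hat f$ is $(\bar{\boldsymbol\xi}_1\hat f\boldsymbol\xi_2)\bar{\boldsymbol\xi}_2-\bar{\boldsymbol\xi}_1|\boldsymbol\xi_2|^2\hat f$. Since $\hat f$ is a scalar, Lemma \ref{asso}(iv) applied to $\bar{\boldsymbol\xi}_1$ and $\boldsymbol\xi_2$ gives $(\bar{\boldsymbol\xi}_1\boldsymbol\xi_2)\bar{\boldsymbol\xi}_2=\bar{\boldsymbol\xi}_1(\boldsymbol\xi_2\bar{\boldsymbol\xi}_2)$, and this entry vanishes. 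The second entry is symmetric.

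The main work, and the expected obstacle, is $j=1$. I set $\hat v_1=\hat u_{\bar12}\bar{\boldsymbol\xi}_2-\bar{\boldsymbol\xi}_1\hat u_{\bar22}$ and $\hat v_2=\hat u_{\bar21}\bar{\boldsymbol\xi}_1-\bar{\boldsymbol\xi}_2\hat u_{\bar11}$, and write $q=\hat u_{\bar12}$, so that $\hat u_{\bar21}=\bar q$. I then split the first entry of $\sigma_2\hat v$ into its $\hat u_{\bar11}$-, $\hat u_{\bar22}$- and $q$-parts.

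The $\hat u_{\bar11}$-part cancels by alternativity: the two contributions are $\pm(\bar{\boldsymbol\xi}_1\boldsymbol\xi_2)\bar{\boldsymbol\xi}_2\hat u_{\bar11}$. The $\hat u_{\bar22}$-part is $-2\hat u_{\bar22}\,{\rm Im}(\bar{\boldsymbol\xi}_1\boldsymbol\xi_1)\bar{\boldsymbol\xi}_1$, which vanishes because $|\boldsymbol\xi_1|^2$ is a complex scalar.

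What remains is an identity trilinear in $(\bar{\boldsymbol\xi}_1,\boldsymbol\xi_2,q)$ and quadratic in $\boldsymbol\xi_1$. Writing $2{\rm Im}\,w=w-\bar w$, it reads
\[
(\bar{\boldsymbol\xi}_1\boldsymbol\xi_2)(\bar q\bar{\boldsymbol\xi}_1)-(\bar{\boldsymbol\xi}_1(\boldsymbol\xi_2q))\bar{\boldsymbol\xi}_2+((q\bar{\boldsymbol\xi}_2)\boldsymbol\xi_1)\bar{\boldsymbol\xi}_1-(\bar{\boldsymbol\xi}_1(\boldsymbol\xi_2\bar q))\bar{\boldsymbol\xi}_1=0 .
\]
To prove it I will do three things. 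First, collapse $((q\bar{\boldsymbol\xi}_2)\boldsymbol\xi_1)\bar{\boldsymbol\xi}_1=|\boldsymbol\xi_1|^2q\bar{\boldsymbol\xi}_2$ by alternativity. Second, rewrite the product $(\bar{\boldsymbol\xi}_1\boldsymbol\xi_2)(\bar q\bar{\boldsymbol\xi}_1)$ through the Moufang identity (\ref{M3}) and its linearizations (\ref{M1})--(\ref{M2}). Third, use that the associator is alternating, together with $\bar x=2{\rm Re}\,x-x$, to convert every term into a multiple of $q\bar{\boldsymbol\xi}_2$ or $\boldsymbol\xi_2\bar q$ times a scalar.

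If this bookkeeping becomes unwieldy, I will use a fallback that exploits linearity. Because the identity is $\mathbb R$-linear in $q$, and $\xi$ can be normalized by an automorphism, i.e.\ a ${\rm Spin}(7)$-type symmetry preserving the structure, it suffices to check it on basis elements $q=e_a$ with $\xi_1$ and $\xi_2$ in a quaternionic subalgebra. In that situation Lemma \ref{asso}(iv) makes most products associative. The second entry of $\sigma_2\sigma_1$ follows by exchanging the indices $1\leftrightarrow2$.
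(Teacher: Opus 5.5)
For $j=0,1$ your argument is the paper's: a symbol computation using alternativity and (\ref{M3}). For $j=2,3$ you take a different route, and it is valid. The paper computes $\sigma_3\sigma_2$ and $\sigma_4\sigma_3$ directly; the $(\bar21)$-entry of $\sigma_3\sigma_2$ is its most delicate step and needs both (\ref{M1}) and (\ref{M2}). You instead use $\mathcal D_3=\mathcal D_1^*$ and $\mathcal D_4=\mathcal D_0^*$ from Proposition \ref{Dstar}, together with $\mathcal D_2^*=\mathcal D_2$. That last identity does hold. For a constant-coefficient second-order operator, the formal adjoint is obtained by transposing the coefficient maps. By Lemma \ref{asso} (i)--(iii), $x\mapsto(e_a\bar e_b)x$ and $y\mapsto(e_b\bar e_a)y$ are mutual transposes, and so are $x\mapsto(e_a\bar x)e_b$ and $y\mapsto(e_b\bar y)e_a$. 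This pairs the first two terms of the two rows of (\ref{D}). For the remaining terms, $\langle 2{\rm Im}(x\bar e_a)e_b,y\rangle=2\langle{\rm Im}(x\bar e_a),{\rm Im}(y\bar e_b)\rangle$ is symmetric under $(x,a)\leftrightarrow(y,b)$, and $\partial_{1a}\partial_{1b}$ is symmetric, so each ${\rm Im}$-term is self-adjoint. Your route thus replaces the hardest Moufang computation with a check that uses only Lemma \ref{asso} (i)--(iii), and it explains the paper's remark that the complex is self-dual. The paper's route is uniform, and it does not depend on the adjoint formulas or on the self-adjointness of $\mathcal D_2$.

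Your $j=1$ computation contains a slip that must be fixed. Since $\hat{\bar v}_2=\boldsymbol{\xi}_1q-\hat u_{\bar11}\boldsymbol{\xi}_2$, the second term of your displayed identity should be $-(\bar{\boldsymbol{\xi}}_1(\boldsymbol{\xi}_1q))\bar{\boldsymbol{\xi}}_2$, not $-(\bar{\boldsymbol{\xi}}_1(\boldsymbol{\xi}_2q))\bar{\boldsymbol{\xi}}_2$. As written, the identity is false. For $q=1$ its left side equals $|\boldsymbol{\xi}_1|^2\bar{\boldsymbol{\xi}}_2-|\boldsymbol{\xi}_2|^2\bar{\boldsymbol{\xi}}_1$, which is nonzero whenever $\xi_1,\xi_2$ are linearly independent. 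So neither your associator bookkeeping nor the basis-check fallback could succeed on it. With the correct term the proof takes one line and needs neither device. The second and third terms are $\mp|\boldsymbol{\xi}_1|^2q\bar{\boldsymbol{\xi}}_2$ by alternativity. Applying (\ref{M3}) with $u=\bar{\boldsymbol{\xi}}_1$, $x=\boldsymbol{\xi}_2$, $y=\bar q$ gives $(\bar{\boldsymbol{\xi}}_1\boldsymbol{\xi}_2)(\bar q\bar{\boldsymbol{\xi}}_1)=\bar{\boldsymbol{\xi}}_1(\boldsymbol{\xi}_2\bar q)\bar{\boldsymbol{\xi}}_1$, which cancels the fourth term. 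This is exactly the paper's computation. Note also that (\ref{M1})--(\ref{M2}) are not linearizations of (\ref{M3}), and they are not needed at this step.
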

\begin{proof} 
 By Lemma \ref{fourier}, we only need to prove their symbols satisfy   $\sigma_{j+1}(\xi)\circ\sigma_j(\xi)=0,$ $j=0,\dots,3.$ For $\hat f\in C_c^\infty(\Omega,\mathbb C),$ we have  
 \begin{equation*}\begin{aligned}
\sigma_1(\xi)\circ\sigma_0(\xi)\hat f=\left(\begin{matrix} \left({\bar{\boldsymbol{\xi}}_1} \hat f { {\boldsymbol{\xi}}_2}\right) {\bar{\boldsymbol{\xi}}_{ 2}}- {\bar{\boldsymbol{\xi}}_{1}} \left|\boldsymbol{\xi}_2\right|^2\hat f \\ \left(\bar{\boldsymbol{\xi}}_2\hat f { {\boldsymbol{\xi}}_1} \right) {\bar{\boldsymbol{\xi}}_1}- \bar{\boldsymbol{\xi}}_{ 2}\left|\boldsymbol{\xi}_{1}\right|^2\hat f \end{matrix}\right)  =\left(\begin{matrix} 0\\0\end{matrix}\right),
\end{aligned}\end{equation*}by definition in  (\ref{sigma01}) and using   alternativity of octonions. Note that   \begin{equation}\begin{aligned}\label{hatbar}\hat{\bar\phi}(\xi) =\int_{\mathbb O^2}\bar\phi(x)e^{-\mathbf i\langle x, \xi\rangle}{\rm d}x=\int_{\mathbb O^2} \overline{\phi(x)e^{-\mathbf i\langle x, \xi\rangle}}{\rm d}x=\bar{\hat\phi}(\xi),\end{aligned}\end{equation} for any $\phi\in L^1\left(\mathbb O^2,\mathbb O\right),$ by complex linearity  of  conjugation  on $\mathbb O_{\mathbb C}$. For $\hat u=\left(\hat u_{\bar ij}\right)\in \mathcal H_2\left(\mathbb O_{\mathbb C}\right),$ we have 
 \begin{equation*}\begin{aligned}
  \left[\sigma_2(\xi)\circ\sigma_1(\xi)\hat u \right]_1   =&\left({\bar{\boldsymbol{\xi}}_1}{ {\boldsymbol{\xi}}_2}\right) \left(\hat u_{\bar21} {\bar{\boldsymbol{\xi}}_{1}}-{ \bar{\boldsymbol{\xi}}_2} \hat u_{\bar11}\right) -\left(\bar{\boldsymbol{\xi}}_{1} \left( {\boldsymbol{\xi}}_{1}\hat u_{\bar 12}-\hat u_{\bar11} {\boldsymbol{\xi}}_2\right)\right) {\bar{\boldsymbol{\xi}}_{2}}\\&+2{\rm Im}\left(\left(\hat u_{\bar12} {\bar{\boldsymbol{\xi}}_{ 2}} -\bar{\boldsymbol{\xi}}_{1} \hat u_{\bar22}\right) { {\boldsymbol{\xi}}_1}\right) \cdot  {\bar{\boldsymbol{\xi}}_{1}} \\=&\bar{\boldsymbol{\xi}}_{1} \left({ {\boldsymbol{\xi}}_2}\hat u_{\bar 21}\right) {\bar{\boldsymbol{\xi}}_{1}}  
 -\left|\boldsymbol{\xi}_2\right|^2{\bar{\boldsymbol{\xi}}_{1}} \hat u_{\bar11} -\left|\boldsymbol{\xi}_1\right|^2\hat u_{\bar 12} {\bar{\boldsymbol{\xi}}_{ 2}}+\left|\boldsymbol{\xi}_2\right|^2 {\bar{\boldsymbol{\xi}}_{1}}\hat u_{\bar11}    +2{\rm Im}\left( \left( \hat u_{\bar12} {\bar{\boldsymbol{\xi}}_{ 2}}\right) { {\boldsymbol{\xi}}_1}\right) \cdot  {\bar{\boldsymbol{\xi}}_{1}} 
\end{aligned}\end{equation*}by definitions in  (\ref{sigma01})-(\ref{sigma234}) and  using  alternativity  of  octonions repeatedly, where $\hat u_{\bar 1 1},\hat u_{\bar 22},$ $\bar{\boldsymbol{\xi}}_{1} \hat u_{\bar22} {\boldsymbol{\xi}}_1\in\mathbb C,$ and $$\left({\bar{\boldsymbol{\xi}}_1}{ {\boldsymbol{\xi}}_2}\right) \left(\hat u_{\bar21} {\bar{\boldsymbol{\xi}}_{1}}\right) ={\bar{\boldsymbol{\xi}}_{1}} \left( {\boldsymbol{\xi}}_2\hat u_{\bar 21}\right) {{\bar{\boldsymbol{\xi}}}_1}, $$  which follows from the   Moufang identity  (\ref{M3}), and \begin{equation*}\begin{aligned}   2{\rm Im}\left( \left(\hat u_{\bar12} {\bar{\boldsymbol{\xi}}_{2}} \right){\boldsymbol{\xi}}_{1}\right) \cdot {\bar{\boldsymbol{\xi}}_{1}}   
=\left|\boldsymbol{\xi}_1\right|^2 \hat u_{\bar 12} {\bar{\boldsymbol{\xi}}_{2}} -\bar{\boldsymbol{\xi}}_{1} \left({ {\boldsymbol{\xi}}_2}\hat u_{\bar 21}\right) {\bar{\boldsymbol{\xi}}_{1}},\end{aligned}\end{equation*} 
   by  using alternativity of octonions. 
     Similarly, \begin{align*}
\left[\sigma_2(\xi)\circ\sigma_1(\xi)\hat u 
\right]_2=0,
 \end{align*} simply  by exchanging indices $1$ and $2$ in the above argument. 
  
  Next for $\hat v= \left(\begin{matrix}\hat v_1\\ \hat v_2 \end{matrix}\right)\in\mathbb O^2_{\mathbb C},$ we have \begin{equation*}\begin{aligned}%\label{takep2}
\left[\sigma_3(\xi)\circ \sigma_2(\xi)\hat v\right]_{\bar 11}=&{\rm Re}\left[ { {\boldsymbol{\xi}}_2} \left(\left( {\bar{\boldsymbol{\xi}}_2}  {\boldsymbol{\xi}}_1 \right)\hat v_1- \left(\bar{\boldsymbol{\xi}}_{2}\hat{\bar{v}}_1\right) {\bar{\boldsymbol{\xi}}_{1}}+2{\rm Im}\left(    \hat v_2 { {\boldsymbol{\xi}}_2}\right)\cdot {\bar{\boldsymbol{\xi}}_{2}} \right) \right]\\=&{\rm Re}\left(\left|{\boldsymbol{\xi}}_2\right|^2 {\boldsymbol{\xi}}_1\hat v_1\right)- {\rm Re}\left(\left|{\boldsymbol{\xi}}_2\right|^2 \bar{\hat v}_1\bar{\boldsymbol{\xi}}_1\right)+2{\rm Re}\left[{\boldsymbol{\xi}}_{2}\left({\rm Im}\left(    \hat v_2 { {\boldsymbol{\xi}}_2}\right)\cdot {\bar{\boldsymbol{\xi}}_{2}}\right)\right]=0  
 \end{aligned}\end{equation*} 
by using Lemma \ref{asso} (i)-(iii) and $${\rm Re}\left[{\boldsymbol{\xi}}_{2}\left({\rm Im}\left(    \hat v_2 { {\boldsymbol{\xi}}_2}\right)\cdot {\bar{\boldsymbol{\xi}}_{2}}\right)\right]=\left| {\boldsymbol{\xi}}_{2}\right|^2{\rm Re} \left({\rm Im}\left(    \hat v_2 { {\boldsymbol{\xi}}_2}\right)  \right)=0. $$  
Similarly \begin{equation*}\begin{aligned}
\left[\sigma_3(\xi)\circ \sigma_2(\xi)\hat v\right]_{\bar 22}=0,\end{aligned}\end{equation*}
and \begin{equation*}\begin{aligned}
2\left[\sigma_3(\xi)\circ \sigma_2(\xi)\hat v\right]_{\bar 21}=&- \left[\left( \bar{\boldsymbol{\xi}}_{2}  {\boldsymbol{\xi}}_1 \right)\hat v_1- \left(\bar{\boldsymbol{\xi}}_{2}\hat{\bar v}_1\right) {\bar{\boldsymbol{\xi}}_{1}}+ \left(\hat v_2 { {\boldsymbol{\xi}}_2}\right) {\bar{\boldsymbol{\xi}}_{2}}-\bar{\boldsymbol{\xi}}_{2}\hat{\bar v}_2 {\bar{\boldsymbol{\xi}}_{2}}\right] { {\boldsymbol{\xi}}_1} \\&- {\bar{\boldsymbol{\xi}}_{2}}\left[\hat{\bar v}_2 \left( \bar{\boldsymbol{\xi}}_{2} { {\boldsymbol{\xi}}_1} \right) -{ {\boldsymbol{\xi}}_{2}} \left(\hat v_2 { {\boldsymbol{\xi}}_1} \right) +  {\boldsymbol{\xi}}_1\left(\bar{\boldsymbol{\xi}}_{1}\hat{\bar v}_1\right)-{ {\boldsymbol{\xi}}_1}\hat v_1 { {\boldsymbol{\xi}}_1}\right]\\ =& \left|\boldsymbol{\xi}_1\right|^2 \bar{\boldsymbol{\xi}}_{2}\hat{\bar v}_1  -\left|\boldsymbol{\xi}_2\right|^2  \hat v_2 { {\boldsymbol{\xi}}_1} + \left|\boldsymbol{\xi}_2\right|^2\hat v_2 { {\boldsymbol{\xi}}_1} -\left|\boldsymbol{\xi}_1\right|^2 \bar{\boldsymbol{\xi}}_{2}\hat{\bar v}_1 = 0, 
\end{aligned}\end{equation*}by using the weak form of the associativity   repeatedly, and \begin{equation}\begin{aligned}\left(\left(\bar{\boldsymbol{\xi}}_{2}{ {\boldsymbol{\xi}}_1}\right)\hat v_1\right) { {\boldsymbol{\xi}}_1}=&\bar{\boldsymbol{\xi}}_{2} \left({ {\boldsymbol{\xi}}_1}\hat v_1 { {\boldsymbol{\xi}}_1}\right), \\  \bar{\boldsymbol{\xi}}_{2}\left(\hat{\bar v}_2\left(  {\bar{\boldsymbol{\xi}}_{2}} { {\boldsymbol{\xi}}_1} \right)\right) =&\left(\bar{\boldsymbol{\xi}}_{2}\hat{\bar v}_2  {\bar{\boldsymbol{\xi}}_{2}}\right) { {\boldsymbol{\xi}}_1},\end{aligned}\end{equation}  by using the   Moufang identities (\ref{M1})-(\ref{M2}). 

Finally, for $\hat t=\left(\begin{matrix}  \hat t_1\\  \hat t_2 \end{matrix}\right)\in\mathbb O^2_{\mathbb C},$ we have \begin{equation*}\begin{aligned}
\sigma_4(\xi)\circ\sigma_3(\xi)\hat t =
&\left|\boldsymbol{\xi}_1\right|^2 {\rm Re}\left( {\boldsymbol{\xi}}_2  \hat t_2 \right) +\left|\boldsymbol{\xi}_2\right|^2  {\rm Re}\left({ {\boldsymbol{\xi}}_1}\hat t_1  \right)   - {\rm Re}\left(\left[{ {\boldsymbol{\xi}}_2} \left(\hat t_2 { {\boldsymbol{\xi}}_1} +{\bar{\boldsymbol{\xi}}_2}\hat{\bar t}_1 \right) \right] {\bar{\boldsymbol{\xi}}}_{1}\right)\\ =&\left|\boldsymbol{\xi}_1\right|^2 {\rm Re} \left( {\boldsymbol{\xi}}_2  \hat t_2  \right) +\left|\boldsymbol{\xi}_2\right|^2 {\rm Re} \left({ {\boldsymbol{\xi}}_1}\hat t_1 \right)   - {\rm Re}\left(\boldsymbol{\xi}_2\hat t_2\left|\boldsymbol{\xi}_1\right|^2+\left|\boldsymbol{\xi}_2 \right|^2\hat{\bar t}_1\overline{\boldsymbol{\xi}}_1\right) =0, 
\end{aligned}\end{equation*}by using   Lemma \ref{asso} (iii). 
The theorem is proved. 
\end{proof}
\begin{rem}The octonionic Hessian complex 
 {\rm(\ref{co})} is a self dual complex.
\end{rem}
To find the compatibility condition of operator $\mathcal D_1,$ note that \begin{equation}\begin{aligned}\label{1122}
\partial_2v_2=&\partial_2\left(u_{\bar21} \overleftarrow{\partial_{\bar1}}-\partial_{\bar2}u_{\bar11}\right) =\partial_2\left(u_{\bar21} \overleftarrow{\partial_{\bar1}}\right)-\Delta_2u_{\bar11}, \\\partial_{\bar1}\bar v_1=&\partial_{\bar1}\overline{\left(u_{\bar12} \overleftarrow{\partial_{\bar2}}-\partial_{\bar 1}u_{\bar22}\right)}=\partial_{\bar1}\left(\partial_2u_{\bar21} \right)-\Delta_1u_{\bar22}
\end{aligned}\end{equation}by $u_{\bar11},u_{\bar22}$ scalar.
Note that \begin{align}\label{guess}\partial_{\bar1}\left(\partial_2 \left(u_{\bar21} \overleftarrow{\partial_{\bar1}}\right)\right)=\left( \partial_{\bar1}\left(\partial_2 u_{\bar21} \right)\right) \overleftarrow{\partial_{\bar1}},\end{align} if associativity holds. 
Then, taking imaginary to delete real parts of  (\ref{1122}), we may guess \begin{align}\label{condi}
\partial_{\bar1}{\rm Im}\left(\partial_2v_2\right)={\rm Im}\left({\partial_{\bar 1}}\bar v_1\right)\partial_{\bar1}
\end{align} to be  a compatibility condition of operator $\mathcal D_1,$ and  exchange indices $1$ and $2$ to get the other one.  Unfortunately, the associativity  (\ref{guess})  does not holds.  We can only use Moufang identities and alternativity as  our main tool to overcome the difficulty,  
 but the left hand side of (\ref{guess}) is not a product appearing Moufang identities in  Theorem B. Therefore  we modify (\ref{condi}) to  the operator  $$\left( {\partial_{\bar 1}} {\partial_2} \right)v_2- \left({\partial_{\bar 1}}\bar v_2\right)\overleftarrow{\partial_{\bar 2}}=2{\rm Im}\left( {\partial_{\bar 1}}\bar v_1\right) \overleftarrow{\partial_{\bar 1}},$$ so that we can use Moufang identities and alternativity to prove it to be the compatibility condition of $\mathcal D_1u=0.$  This is operator $\mathcal D_2$ in (\ref{D}). We made this   modification so  that  we can prove their symbol sequence is exact, and so is the indices  $1$ and $2$ exchanged. These two conditions are sufficient.

\subsection{The ellipticity of the octonionic Hessian  complex}

\begin{prop}\label{pell}
The  complex {\rm(\ref{co})}   is elliptic.
\end{prop}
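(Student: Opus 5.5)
The plan is to prove that the symbol sequence (\ref{simco}) is exact at every $\xi\in\mathbb O^2\setminus\{\mathbf 0\}$. Proposition \ref{complex} already gives $\sigma_{j+1}(\xi)\circ\sigma_j(\xi)=0$, so only the reverse inclusions remain.

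First I will remove the factor $\mathbf i$. Since $\boldsymbol{\xi}_j=\mathbf i\,\overline{\eta_j}$ with $\eta_j:=\sum_a\xi_j^a e_a\in\mathbb O$, and each $\sigma_j$ is homogeneous in $\boldsymbol\xi$, every $\sigma_j(\xi)$ equals a power of $\mathbf i$ times a real linear map built from the real octonions $\eta_1,\eta_2$. Exactness therefore reduces to the real sequence
\[
\mathbb R\to\mathcal H_2(\mathbb O)\to\mathbb O^2\to\mathbb O^2\to\mathcal H_2(\mathbb O)\to\mathbb R,
\]
whose dimensions are $1,10,16,16,10,1$, with alternating sum $0$.

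Next I will use self-duality (the Remark after Proposition \ref{complex}). Since $\mathcal D_3=\mathcal D_1^*$ and $\mathcal D_4=\mathcal D_0^*$ (Proposition \ref{Dstar}), we have $\sigma_3=\pm\sigma_1^{T}$ and $\sigma_4=\pm\sigma_0^{T}$ for the real inner products (\ref{inner}). I also expect $\sigma_2$ to be (anti)self-adjoint, which can be checked from (\ref{D}) using Lemma \ref{asso}. Given these, exactness at $\mathcal V_3$ and $\mathcal V_4$ and surjectivity of $\sigma_4$ follow from exactness at $\mathcal V_2$, $\mathcal V_1$ and injectivity of $\sigma_0$, by $\ker A^T=(\operatorname{im}A)^\perp$. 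Injectivity of $\sigma_0$ is immediate, since its diagonal is $(|\eta_1|^2,|\eta_2|^2)$, not both zero. So two things remain:
\begin{itemize}
\item[(a)] $\ker\sigma_1=\operatorname{im}\sigma_0$, i.e.\ $\dim\ker\sigma_1=1$ and hence $\operatorname{rank}\sigma_1=9$;
\item[(b)] $\operatorname{rank}\sigma_2\ge 7$, which together with $\operatorname{im}\sigma_1\subset\ker\sigma_2$ forces $\dim\ker\sigma_2=9$.
\end{itemize}

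For (a), by symmetry of indices I will assume $\eta_1\neq0$. The equations are $u_{\bar12}\eta_2=\eta_1u_{\bar22}$ and $u_{\bar21}\eta_1=\eta_2u_{\bar11}$, with $u_{\bar21}=\overline{u_{\bar12}}$ and $u_{\bar11},u_{\bar22}$ real. The second equation gives $u_{\bar21}=u_{\bar11}\,\eta_2\overline{\eta_1}/|\eta_1|^2$, using that $\mathbb R[\eta_1,\eta_2]$ is associative (Lemma \ref{asso}(iv)). Substituting into the first equation determines $u_{\bar22}$ when $u_{\bar11}\neq0$; if $u_{\bar11}=0$, it forces $u_{\bar22}=0$. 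Hence the kernel is the line spanned by the Hessian matrix $\sigma_0(\xi)1$. When $\eta_2=0$, the second equation forces $u_{\bar11}=0$ and $u_{\bar21}=0$ directly, and the first then gives $u_{\bar22}=0$, so the kernel is spanned by $u_{\bar11}=1$ alone.

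For (b), which I expect to be the main obstacle, I will again assume $\eta_1\ne0$. I will test $\sigma_2$ on the $8$-dimensional subspace $\{(v_1,0)\}$. There the second component is $(\overline{\eta_2}\eta_1)v_1-(\overline{\eta_2}\,\overline{v_1})\overline{\eta_1}$, and the first is $2\,{\rm Im}(v_1\overline{\eta_1})\eta_1$. The first component vanishes exactly when $v_1\overline{\eta_1}$ is real, i.e.\ $v_1\in\mathbb R\eta_1$. So the map restricted to the $7$-dimensional complement $\{v_1:{\rm Re}(v_1\overline{\eta_1})=0\}$ is injective, which gives rank $\ge7$. The delicate points are the nonassociative manipulations with $\overline{v_1}$ and the sign conventions after stripping $\mathbf i$; I will handle them with the Moufang identities (\ref{M1})–(\ref{M3}). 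If this subspace argument fails in some degenerate configuration, my fallback is to use an automorphism of $\mathbb O$ to move $\eta_1,\eta_2$ into a quaternionic subalgebra and compute the rank there.
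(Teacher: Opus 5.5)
Your argument is correct, and it takes a genuinely different route from the paper's.

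\textbf{The paper's route.} The paper proves exactness stage by stage by writing down explicit preimages. It uses $\hat f=\hat u_{\bar11}/|\boldsymbol\xi_1|^2$ for $\ker\sigma_1$, the matrix $\hat u$ of (\ref{xi}) for $\ker\sigma_2$, $\hat v=\hat t/\bigl(2(|\boldsymbol\xi_1|^2+|\boldsymbol\xi_2|^2)\bigr)$ for $\ker\sigma_3$, and explicit $\hat t$ and $\hat s$ for the last two stages. Each is verified with alternativity and the Moufang identities, with separate cases for $|\boldsymbol\xi_j|=0$.

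\textbf{Your route.} You pass to the real symbols. You use $\mathcal D_3=\mathcal D_1^*$ and $\mathcal D_4=\mathcal D_0^*$ (Proposition \ref{Dstar}) to get $\mathrm{rank}\,\sigma_3=\mathrm{rank}\,\sigma_1$ and $\mathrm{rank}\,\sigma_4=\mathrm{rank}\,\sigma_0$. You compute $\dim\ker\sigma_1=1$ directly. You bound $\mathrm{rank}\,\sigma_2\ge 7$ from the single block $v_1\mapsto 2\,\mathrm{Im}(v_1\bar\eta_1)\eta_1$. Together with $\sigma_{j+1}\sigma_j=0$ from Proposition \ref{complex}, which both proofs need, rank--nullity forces the ranks $1,9,7,9,1$ and hence exactness everywhere.

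\textbf{Comparison.} Your approach needs almost no nonassociative algebra: (b) uses no Moufang identity at all, since only the first component enters. It also avoids zero-divisor issues in $\mathbb O_{\mathbb C}$. The paper's approach gives more than exactness, namely explicit right inverses of the symbols on their kernels, but at the cost of long and delicate octonionic computations at every stage.

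\textbf{Small points to tidy up.}
\begin{itemize}
\item You do not need $\sigma_2$ to be (anti)self-adjoint. Once $\mathrm{rank}\,\sigma_2=7$, you have $\dim\ker\sigma_3=16-\mathrm{rank}\,\sigma_1=7$, and the inclusion $\mathrm{im}\,\sigma_2\subset\ker\sigma_3$ gives equality. (The real symbol of $\sigma_2$ is in fact symmetric, which one checks with Lemma \ref{asso}(ii)--(iii).)
\item In (a), when $\eta_2=0$ the second equation forces $u_{\bar21}=0$, not $u_{\bar11}=0$. In any case your general formulas $u_{\bar21}=u_{\bar11}\,\eta_2\bar\eta_1/|\eta_1|^2$ and $u_{\bar22}=u_{\bar11}|\eta_2|^2/|\eta_1|^2$ already cover that case.
\item With $\boldsymbol\xi_j=\mathbf i\,\bar\eta_j$, the real second component of $\sigma_2$ on $(v_1,0)$ is, up to sign, $(\eta_2\bar\eta_1)v_1-(\eta_2\bar v_1)\eta_1$; your formula has the conjugations swapped. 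This is harmless, since that component plays no role in your rank bound.
\end{itemize}
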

\begin{proof}
In order to   prove  {\rm(\ref{co})} to be    elliptic, we need to prove  the exactness of the symbol sequence (\ref{simco}).
  It follows from  $\mathcal D_{l+1}\circ\mathcal D_l=0$ that $\sigma_{l+1}(\xi)\circ\sigma_l(\xi)=0,$ for   ${\xi}\in\mathbb O^{2}\setminus\{\mathbf 0\},$ i.e. ${\rm Im}\ \sigma_l(\xi) {\subset}\ker\sigma_{l+1}(\xi),$ for $l=0,\dots,3.$ So  we only need to prove that the injectivity of  $\sigma_0(\xi)$, $\ker\sigma_{l+1}(\xi)\subset{\rm Im}\ \sigma_{l}(\xi),$   and the surjectivity of  $\sigma_4(\xi)$.

By  definition (\ref{sigma01}),  
for any ${\hat f}\in\ker \sigma_0(\xi),$ we have $$\left|\boldsymbol{\xi}_1\right|^2{\hat f} =\left|\boldsymbol{\xi}_2\right|^2{\hat f}=0.$$ Since 
$\left|\boldsymbol{\xi}_1\right|^2$ and $\left|\boldsymbol{\xi}_2\right|^2$ can not vanish simultaneously, we get    ${\hat f}=0.$ Hence $\sigma_0(\xi)$ is injective.

(i) Now consider $\ker \sigma_1( {\xi})$.  Let  ${\hat u} \in \ker \sigma_1(\xi)$ with $\left|\boldsymbol{\xi}_1\right|\neq0,$ and let
$
{\hat f}=\frac{{\hat u}_{\bar 11}}{\left|\boldsymbol{\xi}_1\right|^2}\in\mathbb C.
$
We claim that $\sigma_0({\xi}){\hat f}={\hat u}.$ 
By definition in (\ref{sigma01}),  we have
\begin{align}\label{s1}{\hat u}_{\bar 1 2}  \bar{\boldsymbol{\xi}}_2-  \bar{\boldsymbol{\xi}}_{1}{\hat u}_{ \bar 2 2}=0={\hat u}_{\bar 2 1} \bar{\boldsymbol{\xi}}_1-  \bar{\boldsymbol{\xi}}_{2}{\hat u}_{\bar 1 1}.\end{align}
Then we get  \begin{equation*}\begin{aligned} \left(\sigma_0(\xi){\hat f}\right)_{ \bar 1 1}=&\left|\boldsymbol{\xi}_1\right|^2\cdot\frac{{\hat u}_{ \bar 1 1} }{\left|\boldsymbol{\xi}_1\right|^2} ={\hat u}_{\bar 1 1},\\ \left(\sigma_0(\xi){\hat f}\right)_{ \bar 1 2}=&\frac{\bar{\boldsymbol{\xi}}_1{\hat u}_{ \bar 1 1} {\boldsymbol{\xi}}_2}{\left|\boldsymbol{\xi}_1\right|^2} =\frac{\bar{\boldsymbol{\xi}}_1\left( {\boldsymbol{\xi}}_1{\hat u}_{\bar 1 2}\right)}{\left|\boldsymbol{\xi}_1\right|^2}={\hat u}_{1\bar 2},\\\left(\sigma_0(\xi){\hat f}\right)_{ \bar 2 1}=&\frac{\bar{\boldsymbol{\xi}}_2{\hat u}_{ \bar 1 1} {\boldsymbol{\xi}}_1}{\left| \boldsymbol{\xi}_1\right|^2}=\frac{{\hat u}_{ \bar 2 1}\bar{\boldsymbol{\xi}}_1 {\boldsymbol{\xi}}_1} {\left|\boldsymbol{\xi}_1\right|^2}={\hat u}_{\bar 2 1},\end{aligned}\end{equation*}
by using (\ref{s1}) and $$\bar{\hat u}_{\bar ij}=\hat{  u}_{\bar ji},\quad i,j=1,2,$$ by (\ref{hatbar}). By multiplying $\boldsymbol{\xi}_1$ and $\boldsymbol{\xi}_2$ from the  left to the first and second identity of  (\ref{s1}), respectively,  we get \begin{align*}
 {\boldsymbol{\xi}}_1 \left({\hat u}_{\bar12}\bar{\boldsymbol{\xi}}_2\right) =\left|\boldsymbol{\xi}_1\right|^2{\hat u}_{\bar22},\quad {\boldsymbol{\xi}}_2 \left({\hat u}_{\bar21} \bar{\boldsymbol{\xi}}_1\right) =\left|\boldsymbol{\xi}_2\right|^2{\hat u}_{\bar11},
\end{align*}
which are both complex.  
On the other hand, since ${\rm Re} \left[{\boldsymbol{\xi}}_1 \left({\hat u}_{\bar12}\bar{\boldsymbol{\xi}}_2\right)  \right]={\rm Re}\left[{\boldsymbol{\xi}}_2 \left({\hat u}_{\bar21} \bar{\boldsymbol{\xi}}_1\right)\right]$ by Lemma \ref{asso} (iii),   we get 
$\left|\boldsymbol{\xi}_2\right|^2{\hat u}_{\bar 1 1}= \left|\boldsymbol{\xi}_1\right|^2{\hat u}_{\bar 2 2},$ and so \begin{align*}
\left(\sigma_0(\xi){\hat f}\right)_{ \bar 2 2}=\frac{\left|\boldsymbol{\xi}_2\right|^2{\hat u}_{ \bar 1 1}}{\left|\boldsymbol{\xi}_1\right|^2}={\hat u}_{\bar 2 2}.
\end{align*}
 Thus $\sigma_0(\xi){\hat f}={\hat u}.$ If $\left|\boldsymbol{\xi}_1\right|=0,$ we must have $\left|\boldsymbol{\xi}_2\right|\neq0,$ let
$
{\hat f}=\frac{{\hat u}_{\bar 22}}{|\boldsymbol{\xi}_2|^2}\in\mathbb C.
$
We still have   $\sigma_0(\xi){\hat f}={\hat u}$. So $\ker\sigma_1(\xi)\subset{\rm Im}\ \sigma_0(\xi).$

(ii) Next consider $\ker \sigma_2(\xi)$.  
By definition, for any   ${\hat v}=\left(\begin{matrix}{\hat v}_1\\{\hat v}_2\end{matrix}\right)\in \ker \sigma_2(\xi),$ we have  \begin{equation}\begin{aligned}\label{id2}\left(\bar{\boldsymbol{\xi}}_1 {\boldsymbol{\xi}}_2\right){\hat v}_2 -\left(\bar{\boldsymbol{\xi}}_1\hat{\bar v}_2 \right) \bar{\boldsymbol{\xi}}_2+{\hat v}_1 \left|\boldsymbol{\xi}_1\right|^2-\bar{\boldsymbol{\xi}}_1\hat{\bar v}_1\bar{\boldsymbol{\xi}}_1=0,\\\left(\bar{\boldsymbol{\xi}}_2 {\boldsymbol{\xi}}_1\right) {\hat v}_1-\left(\bar{\boldsymbol{\xi}}_2\hat{\bar v}_1\right) \bar{\boldsymbol{\xi}}_1+{\hat v}_2\left| \boldsymbol{\xi}_2\right|^2-\bar{\boldsymbol{\xi}}_2\hat{\bar v}_2\bar{\boldsymbol{\xi}}_2=0.\end{aligned}\end{equation}    
  By multiplying $ {\boldsymbol{\xi}}_1$ and $ {\boldsymbol{\xi}}_2$ from the  right to   the  first and second identities  of (\ref{id2}), respectively,  we get \begin{equation}\begin{aligned}\label{bar}
\left(\left(\bar{\boldsymbol{\xi}}_1 {\boldsymbol{\xi}}_2\right){\hat v}_2\right)  {\boldsymbol{\xi}}_1 -\left(\left(\bar{\boldsymbol{\xi}}_1 \hat{\bar v}_2\right)\bar{\boldsymbol{\xi}}_2\right) {\boldsymbol{\xi}}_1 =\left|\boldsymbol{\xi}_1\right|^2\left( \bar{\boldsymbol{\xi}}_1\hat{\bar v}_1   - {\hat v}_1 {\boldsymbol{\xi}}_1 \right)\in{\rm Im}\ \mathbb O_{\mathbb C},\\\left(\left(\bar{\boldsymbol{\xi}}_2 {\boldsymbol{\xi}}_1\right){\hat v}_1\right)  {\boldsymbol{\xi}}_2 -\left(\left(\bar{\boldsymbol{\xi}}_2\hat{\bar v}_1\right)\bar{\boldsymbol{\xi}}_1\right) {\boldsymbol{\xi}}_2 =\left|\boldsymbol{\xi}_2\right|^2\left( \bar{\boldsymbol{\xi}}_2\hat{\bar v}_2   - {\hat v}_2 {\boldsymbol{\xi}}_2 \right)\in{\rm Im}\ \mathbb O_{\mathbb C}.
 \end{aligned}\end{equation}
For $\left|\boldsymbol{\xi}_1\right|\neq0,\left| \boldsymbol{\xi}_2\right|\neq0,$ let \begin{equation}\begin{aligned}\label{xi}{\hat u}_{\bar 1 1}:=&-\frac{ {\boldsymbol{\xi}}_2{\hat v}_2+ \hat{\bar v}_2\bar{\boldsymbol{\xi}}_2 }{2\left|\boldsymbol{\xi}_2\right|^2},\\{\hat u}_{\bar 12}:=&\frac{ \left| \boldsymbol{\xi}_1\right|^2  {\hat v}_1 {\boldsymbol{\xi}}_2 -\left(\bar{\boldsymbol{\xi}}_1\hat{\bar v}_1  \bar{\boldsymbol{\xi}}_1\right) {\boldsymbol{\xi}}_2} {2\left|\boldsymbol{\xi}_1\right|^2 \left|\boldsymbol{\xi}_2\right|^2},\\{\hat u}_{\bar 2 1}:=&\frac{  \left| \boldsymbol{\xi}_2\right|^2 {\hat v}_2 {\boldsymbol{\xi}}_1  - \left(\bar{\boldsymbol{\xi}}_2\hat{\bar v}_2  \bar{\boldsymbol{\xi}}_2\right) {\boldsymbol{\xi}}_1 }{2\left|\boldsymbol{\xi}_1\right|^2 \left|\boldsymbol{\xi}_2\right|^2},\\{\hat u}_{\bar 2 2}:=&-\frac{ {\boldsymbol{\xi}}_1  {\hat v}_1+\hat{\bar v}_1 \bar{\boldsymbol{\xi}}_1 }{2\left|\boldsymbol{\xi}_1\right|^2}. \end{aligned}\end{equation}

To check ${\hat u}=\left( {\hat u}_{\bar ij} \right)\in\mathcal H_2(\mathbb O_{\mathbb C}),$ note that by multiplying $\bar{\boldsymbol{\xi}}_1$ and then $ {\boldsymbol{\xi}}_2$ from the  right to the first identity of (\ref{bar}), we get \begin{equation*}\begin{aligned}
\left|\boldsymbol{\xi}_1\right|^2 \left[\left(\left(\bar{\boldsymbol{\xi}}_1 {\boldsymbol{\xi}}_2\right){\hat v}_2\right)  {\boldsymbol{\xi}}_2 -\left(\left(\bar{\boldsymbol{\xi}}_1\hat{\bar v}_2\right) \bar{\boldsymbol{\xi}}_2\right) {\boldsymbol{\xi}}_2\right] =\left|\boldsymbol{\xi}_1\right|^2\left[\left( \bar{\boldsymbol{\xi}}_1\hat{\bar v}_1   - {\hat v}_1 {\boldsymbol{\xi}}_1 \right) \bar{\boldsymbol{\xi}}_1\right] {\boldsymbol{\xi}}_2.
\end{aligned}\end{equation*} Then,  
 we have \begin{equation}\begin{aligned}\label{bar1}
\bar{\boldsymbol{\xi}}_1 \left( {\boldsymbol{\xi}}_2 {\hat v}_2 {\boldsymbol{\xi}}_2 \right) -\left|\boldsymbol{\xi}_2\right|^2 \bar{\boldsymbol{\xi}}_1\hat{\bar v}_2  =   \left(\bar{\boldsymbol{\xi}}_1\hat{\bar v}_1 \bar{\boldsymbol{\xi}}_1\right)  {\boldsymbol{\xi}}_2  - \left|\boldsymbol{\xi}_1\right|^2{\hat v}_1   {\boldsymbol{\xi}}_2,
\end{aligned}\end{equation} by $\left|{\boldsymbol{\xi}}_1\right|\neq0$ and  using the alternativity  and  the   Moufang identity (\ref{M2}).
(\ref{bar1}) implies   ${\hat u}_{\bar12}=\overline{{\hat u}_{\bar 21}}$ in (\ref{xi}).  
So $u$ given in (\ref{xi}) belongs to $\mathcal H_2\left(\mathbb O_{\mathbb C}\right).$ It is direct to see that  
\begin{equation*}\begin{aligned}\label{sigma1} \left(\sigma_1(\xi){\hat u}\right)_{1}=&{\hat u}_{ \bar 12} \bar{\boldsymbol{\xi}}_2- \bar{\boldsymbol{\xi}}_1{\hat u}_{\bar 2 2}= \frac{ \left| \boldsymbol{\xi}_1\right|^2  {\hat v}_1 {\boldsymbol{\xi}}_2 -\left(\bar{\boldsymbol{\xi}}_1\hat{\bar v}_1  \bar{\boldsymbol{\xi}}_1\right) {\boldsymbol{\xi}}_2}{2 \left|\boldsymbol{\xi}_1\right|^2\left|\boldsymbol{\xi}_2 \right|^2}\cdot \bar{\boldsymbol{\xi}}_2 + \bar{\boldsymbol{\xi}}_1\cdot\frac{ {\boldsymbol{\xi}}_1  {\hat v}_1+\hat{\bar v}_1 \bar{\boldsymbol{\xi}}_1 }{2\left|\boldsymbol{\xi}_1\right|^2}   =  {\hat v}_1, \end{aligned}\end{equation*} 
  by definition in  (\ref{xi}) and  alternativity, and     
 \begin{equation*}\begin{aligned} \left(\sigma_1(\xi){\hat u}\right)_2=& {\hat u}_{\bar 21}  \bar{\boldsymbol{\xi}}_1- \bar{\boldsymbol{\xi}}_2{\hat u}_{\bar 11}=  \frac{  \left| \boldsymbol{\xi}_2\right|^2 {\hat v}_2 {\boldsymbol{\xi}}_1  - \left(\bar{\boldsymbol{\xi}}_2\hat{\bar v}_2  \bar{\boldsymbol{\xi}}_2\right) {\boldsymbol{\xi}}_1 }{2\left|\boldsymbol{\xi}_1\right|^2 \left|\boldsymbol{\xi}_2\right|^2}\cdot \bar{\boldsymbol{\xi}}_1 +\bar{\boldsymbol{\xi}}_2\cdot\frac{ {\boldsymbol{\xi}}_2{\hat v}_2+ \hat{\bar v}_2\bar{\boldsymbol{\xi}}_2 }{2\left|\boldsymbol{\xi}_2\right|^2}   =  {\hat v}_2. \end{aligned}\end{equation*}
  If $\left|\boldsymbol{\xi}_1\right|=0,$ we must  have $\left|\boldsymbol{\xi}_2\right|\neq0$ and ${\hat v}_2 {\boldsymbol{\xi}}_2=\bar{\boldsymbol{\xi}}_2\hat{\bar v}_2$ by the second equation in  (\ref{bar}), and so  ${\hat v}_2 {\boldsymbol{\xi}}_2\in\mathbb C.$ Let\begin{equation*}\begin{aligned}
  {\hat u}_{\bar 11}=-\frac{{\hat v}_2 {\boldsymbol{\xi}}_2}{ \left|\boldsymbol{\xi}_2\right|^2},\quad {\hat u}_{\bar 12}=\frac{{\hat v}_1 {\boldsymbol{\xi}}_2}{ \left|\boldsymbol{\xi}_2\right|^2} =\overline{{\hat u}_{\bar 21}},\quad {\hat u}_{\bar 22}=0. 
  \end{aligned}\end{equation*}
Then,  we have \begin{equation*}\begin{aligned} \left(\sigma_1(\xi){\hat u}\right)_{1}=&{\hat u}_{ \bar 12} \bar{\boldsymbol{\xi}}_2 = \left(\frac{{\hat v}_1 {\boldsymbol{\xi}}_2} {\left|\boldsymbol{\xi}_2\right|^2}\right)\bar{\boldsymbol{\xi}}_2= {\hat v}_1,\\ \left(\sigma_1(\xi){\hat u}\right)_2=&  - \bar{\boldsymbol{\xi}}_2{\hat u}_{\bar 11}=-\bar{\boldsymbol{\xi}}_2\left(-\frac{{\hat v}_2 {\boldsymbol{\xi}}_2}{\left|\boldsymbol{\xi}_2\right|^2} \right)= \left( \frac{{\hat v}_2 {\boldsymbol{\xi}}_2} {\left|\boldsymbol{\xi}_2\right|^2}\right)\bar{\boldsymbol{\xi}}_2 ={\hat v}_2, \end{aligned}\end{equation*}where the last second holds by ${\hat v}_2 {\boldsymbol{\xi}}_2\in\mathbb C$ in this case. Similarly we can prove the case of  $\left|\boldsymbol{\xi}_2\right|=0$.
Thus $\sigma_1(\xi){\hat u}={\hat v}$ and so $\ker\sigma_2(\xi)\subset{\rm Im}\ \sigma_1(\xi).$

  (iii) Next consider $\ker \sigma_3(\xi)$. By definition in (\ref{sigma234}),    for any   ${\hat t}=\left(\begin{matrix}{\hat t}_1\\{\hat t}_2\end{matrix}\right)\in \ker \sigma_3(\xi),$ we have  \begin{equation}\label{zeta}\begin{matrix} {\boldsymbol{\xi}}_2{\hat t}_2+  \hat{\bar t}_2  \bar{\boldsymbol{\xi}}_2=0, \\{\boldsymbol{\xi}}_1{\hat t}_1+\hat{\bar t}_1\bar{\boldsymbol{\xi}}_1=0,\\{\hat t}_1 {\boldsymbol{\xi}}_2 +\bar{\boldsymbol{\xi}}_1\hat{\bar t}_2=0. \end{matrix}
\end{equation}
If take $${\hat v}_1 =\frac{ {\hat t}_1}{2\left( \left|\boldsymbol{\xi}_1\right|^2 +\left|\boldsymbol{\xi}_2\right|^2\right)},\quad {\hat v}_2=\frac{  {\hat t}_2}{2\left(\left|\boldsymbol{\xi}_1\right|^2 +\left|\boldsymbol{\xi}_2\right|^2\right)},$$    we have 
   \begin{equation*}\begin{aligned} \left(\sigma_2(\xi){\hat v}\right)_{1} =& \left(\bar{\boldsymbol{\xi}}_1 {\boldsymbol{\xi}}_2\right) {\hat v}_2-\left(\bar{\boldsymbol{\xi}}_1\hat{\bar v}_2\right) \bar{\boldsymbol{\xi}}_2+{\hat v}_1\left|\boldsymbol{\xi}_1\right|^2 -\bar{\boldsymbol{\xi}}_1\hat{\bar v}_1\bar{\boldsymbol{\xi}}_1\\ =& \frac{1}{2\left(\left|\boldsymbol{\xi}_1\right|^2 +\left|\boldsymbol{\xi}_2\right|^2\right)} \left(\left(\bar{\boldsymbol{\xi}}_1 {\boldsymbol{\xi}}_2\right) {\hat t}_2 -\left(\bar{\boldsymbol{\xi}}_1\hat{\bar t}_2\right) \bar{\boldsymbol{\xi}}_2+{\hat t}_1\left|\boldsymbol{\xi}_1\right|^2 -\bar{\boldsymbol{\xi}}_1\hat{\bar t}_1\bar{\boldsymbol{\xi}}_1\right)\\ =&\frac{1}{2\left(\left|\boldsymbol{\xi}_1\right|^2 +\left|\boldsymbol{\xi}_2\right|^2\right)} \left(\left(\bar{\boldsymbol{\xi}}_1 {\boldsymbol{\xi}}_2\right) {\hat t}_2 +\left({\hat t}_1 {\boldsymbol{\xi}}_2\right) \bar{\boldsymbol{\xi}}_2+{\hat t}_1\left|\boldsymbol{\xi}_1\right|^2 +\bar{\boldsymbol{\xi}}_1\left( {\boldsymbol{\xi}}_1 {\hat t}_1\right)\right)  = {\hat t}_1, \end{aligned}\end{equation*}
  by using (\ref{zeta}),   alternativity of octonions and  \begin{align}\label{cl} \left(\bar{\boldsymbol{\xi}}_1 {\boldsymbol{\xi}}_2\right){\hat t}_2 =\left|\boldsymbol{\xi}_2\right|^2{\hat t}_1.\end{align} 
This is because   \begin{equation*}\begin{aligned}
 \left[\left(\bar{\boldsymbol{\xi}}_1 {\boldsymbol{\xi}}_2\right) {\hat t}_2\right] {\boldsymbol{\xi}}_2= \bar{\boldsymbol{\xi}}_1\left[ {\boldsymbol{\xi}}_2 {\hat t}_2 {\boldsymbol{\xi}}_2\right]=-\bar{\boldsymbol{\xi}}_1 \hat{\bar t}_2\left|\boldsymbol{\xi}_2\right|^2 ={\hat t}_1 \left|\boldsymbol{\xi}_2\right|^2{\boldsymbol{\xi}}_2,
\end{aligned}\end{equation*}
where the first  identity  holds by using the    Moufang identity (\ref{M2}), while the second and third identities holds by (\ref{zeta}). Then (\ref{cl}) follows if  $\left|{\boldsymbol{\xi}}_2\right|\neq0,$ while   (\ref{cl}) holds obviously if $\left|{\boldsymbol{\xi}}_2\right|=0$. 
Similarly, we have \begin{equation*}\begin{aligned} \left(\sigma_2(\xi){\hat v}\right)_{2}={\hat t}_2, \end{aligned}\end{equation*} by exchanging indices $1$ and $2.$  Thus $\sigma_2(\xi){\hat v}={\hat t}$ and so $\ker\sigma_3(\xi)\subset{\rm Im}\ \sigma_2(\xi).$
  
(iv) Next consider $\ker \sigma_4(\xi)$. For any   ${\hat s}=\left({\hat s}_{\bar ij}\right)\in \ker \sigma_4(\xi),$ where ${\hat s}_{\bar11},{\hat s}_{\bar22}\in\mathbb C,{\hat s}_{\bar12}=\overline{{\hat s}_{\bar21}}\in\mathbb O_{\mathbb C},$ we have  \begin{equation}\label{s3}
 \sigma_4(\xi) {\hat s} = \left|\boldsymbol{\xi}_1\right|^2 {\hat s}_{\bar11}+\left|\boldsymbol{\xi}_2\right|^2{\hat s}_{\bar22} +2{\rm Re}\left( {\boldsymbol{\xi}}_2 {\hat s}_{\bar21}\bar{\boldsymbol{\xi}}_1\right)=0.
  \end{equation}
For $\left|\boldsymbol{\xi}_2\right|\neq0,$ if let $${\hat t}_1=-\frac{2 {\hat s}_{\bar12} \bar{\boldsymbol{\xi}}_2 }{\left|\boldsymbol{\xi}_2\right|^2}- \frac{ \bar{\boldsymbol{\xi}}_1{\hat s}_{\bar11} }{ \left|\boldsymbol{\xi}_2\right|^2}, \quad {\hat t}_2=\frac{ \bar{\boldsymbol{\xi}}_2 {\hat s}_{\bar11}}{\left|\boldsymbol{\xi}_2\right|^2},$$ we have 
\begin{equation*}\begin{aligned}
\left(\sigma_3(\xi){\hat t}\right)_{\bar11} =& {\rm Re}\left( {\boldsymbol{\xi}}_2{\hat t}_2 \right) 
={\hat s}_{\bar11},
\\\left(\sigma_3(\xi) {\hat t}\right)_{\bar22} =&{\rm Re}\left( {\boldsymbol{\xi}}_1{\hat t}_1 \right)=\frac{-2{\rm Re}\left(\boldsymbol{\xi}_1 {\hat s}_{\bar12} \bar{\boldsymbol{\xi}}_2\right)-  \left|\boldsymbol{\xi}_1\right|^2 {\hat s}_{\bar11}} {\left|\boldsymbol{\xi}_2\right|^2} ={\hat s}_{\bar22},
\end{aligned}\end{equation*}    by using (\ref{s3}),  
  and \begin{equation*}\begin{aligned}
\left(\sigma_3(\xi){\hat t}\right)_{\bar21} =&\frac{-{\hat t}_2 {\boldsymbol{\xi}}_1 - \bar{\boldsymbol{\xi}}_2\hat{\bar t}_1}{2}=- \frac{\bar{\boldsymbol{\xi}}_2{\hat s}_{\bar11} {\boldsymbol{\xi}}_1}{2\left|\boldsymbol{\xi}_2\right|^2} +\frac{ \bar{\boldsymbol{\xi}}_2\boldsymbol{\xi}_2{\hat s}_{\bar21}} {\left|\boldsymbol{\xi}_2\right|^2} +\frac{\bar{\boldsymbol{\xi}}_2{\hat s}_{\bar11} {\boldsymbol{\xi}}_1}{2\left|\boldsymbol{\xi}_2\right|^2} ={\hat s}_{\bar21},
 \end{aligned}\end{equation*} by the fact that ${\hat s}_{\bar11}\in\mathbb C.$
 If $\left|\boldsymbol{\xi}_2\right|=0,$ we must have $\left|\boldsymbol{\xi}_1\right|\neq0.$ Let $${\hat t}_1=\frac{\bar{\boldsymbol{\xi}}_1 {\hat s}_{\bar22}}{ \left|\boldsymbol{\xi}_1\right|^2},\quad {\hat t}_2=-\frac{2 {\hat s}_{\bar21}\bar{\boldsymbol{\xi}}_1  }{\left|\boldsymbol{\xi}_1\right|^2}- \frac{\bar{\boldsymbol{\xi}}_2{\hat s}_{\bar22} }{ \left|\boldsymbol{\xi}_1\right|^2}.$$ Then we have 
 \begin{equation*}\begin{aligned} \left(\sigma_3(\xi){\hat t}\right)_{\bar11} =& {\rm Re}\left({\boldsymbol{\xi}}_2{\hat t}_2\right) =\frac{-2{\rm Re}\left( {\boldsymbol{\xi}}_2 {\hat s}_{\bar21}\bar{\boldsymbol{\xi}}_1\right)-{\hat s}_{\bar22} \left|\boldsymbol{\xi}_2\right|^2} {\left|\boldsymbol{\xi}_1\right|^2}={\hat s}_{\bar11},\\
 \left(\sigma_3(\xi){\hat t}\right)_{\bar2 2} =& {\rm Re}\left( {\boldsymbol{\xi}}_1{\hat t}_1\right)={\hat s}_{\bar22},\\ \left(\sigma_3(\xi){\hat t}\right)_{\bar21} =&\frac{-{\hat t}_2 {\boldsymbol{\xi}}_1 - \bar{\boldsymbol{\xi}}_2\hat{\bar t}_1}{2}=  \frac{{\hat s}_{\bar21}\bar{\boldsymbol{\xi}}_1 {\boldsymbol{\xi}}_1}{ \left|\boldsymbol{\xi}_1\right|^2} +\frac{\bar{\boldsymbol{\xi}}_2{\hat s}_{\bar22} {\boldsymbol{\xi}}_1}{2\left|\boldsymbol{\xi}_1\right|^2} - \frac{\bar{\boldsymbol{\xi}}_2{\hat s}_{\bar22} {\boldsymbol{\xi}}_1}{2\left|\boldsymbol{\xi}_1\right|^2} ={\hat s}_{\bar21}.
 \end{aligned}\end{equation*}
Thus, $\sigma_3(\xi){\hat t}={\hat s}$ and so $\ker\sigma_4(\xi)\subset{\rm Im}\ \sigma_3(\xi).$ 
 
(v) At last we   consider ${\rm Im}\ \sigma_4(\xi)$. For any   $\hat \tau\in  \mathbb C $ and $\left|\boldsymbol{\xi}_1\right|\neq0, \left|\boldsymbol{\xi}_2\right|\neq0,$ let ${\hat s}=\left( {\hat s}_{\bar ij} \right)\in \mathcal H_2\left(\mathbb O_{\mathbb C}\right)$ be  given by
\begin{equation*}\begin{aligned}
{\hat s}_{\bar11}=&{\hat s}_{\bar22} =\frac{\hat \tau}{2\left(|\boldsymbol{\xi}_1|^2 +|\boldsymbol{\xi}_2|^2\right)}, \\{\hat s}_{\bar21}=& \frac{\bar{\boldsymbol{\xi}}_{2}\hat \tau {\boldsymbol{\xi}}_1}{4|\boldsymbol{\xi}_1|^2 |\boldsymbol{\xi}_2|^2}.
\end{aligned}\end{equation*}
Then we have
\begin{equation*}\begin{aligned}
\sigma_4(\xi){\hat s}=&\left|\boldsymbol{\xi}_1\right|^2 {\hat s}_{\bar11}+\left|\boldsymbol{\xi}_2\right|^2{\hat s}_{\bar22} +2{\rm Re}\left( {\boldsymbol{\xi}}_2{\hat s}_{\bar21}\bar{\boldsymbol{\xi}}_1 \right) = \hat \tau.
\end{aligned}\end{equation*} If $\left|\boldsymbol{\xi}_1\right|=0,$ letting  $
{\hat s}_{\bar22}=\frac{\hat\tau}{\left|\boldsymbol{\xi}_2\right|^2},
$ and ${\hat s}_{\bar11},{\hat s}_{\bar21}$ be   arbitrarily chosen, we have  \begin{equation*}\begin{aligned}
\sigma_4(\xi){\hat s}=\left|\boldsymbol{\xi}_1\right|^2 {\hat s}_{\bar11}+\left|\boldsymbol{\xi}_2\right|^2{\hat s}_{\bar22} +2{\rm Re}\left( {\boldsymbol{\xi}}_2{\hat s}_{\bar21}\bar{\boldsymbol{\xi}}_1 \right) =\hat \tau.
\end{aligned}\end{equation*}Similarly, if $\left|\boldsymbol{\xi}_2\right|=0,$ letting  $
{\hat s}_{\bar11}=\frac{\hat \tau}{\left|\boldsymbol{\xi}_1\right|^2},
$ and ${\hat s}_{\bar22},{\hat s}_{\bar21}$ be  arbitrarily chosen,  we have $
\sigma_4(\xi){\hat s} 
=\hat \tau.
$
So $\sigma_4(\xi)$ is surjective. The theorem is proved.
\end{proof}
Now Theorem \ref{exact} follows from  Proposition \ref{complex} and Proposition  \ref{pell} directly.

\section{The boundary operator}
\subsection{The boundary complex}
To prove Hartogs--Bochner extension for  pluriharmonic  functions in two octonionic variables, it is necessary to find the first operator of the boundary complex of the octonionic Hessian   complex  (\ref{co}).

 Let us recall the definition of the boundary complex of a general differential complex on $\mathbb R^{N}$ (cf. \cite{AN,Naci}):
\begin{align*}
C^\infty\left(\mathbb R^{N},E^{(0)}\right)\xrightarrow{A_0(x,\partial)} C^\infty\left(\mathbb R^{N},E^{(1)}\right) \xrightarrow{A_1(x,\partial)}C^\infty\left(\mathbb R^{N},E^{(2)}\right) \xrightarrow{A_2(x,\partial)}  \cdots,
\end{align*}where $E^{(j)}$ are finite dimensional complex vector spaces, $A_j(x,\partial)$ are matrix-valued differential operators. 
Let $\Omega\subset \mathbb R^N$ be a domain with smooth boundary. Given an open set $U.$
We say $u\in C^\infty_c\left(\overline \Omega,E^{(j)}\right)$   has \emph{zero Cauchy data} on the boundary $\partial \Omega$ for $A_j(x,\partial)$ if for any $\psi\in C^\infty_c\left(U,E^{(j+1)}\right),$ i.e. $\psi$ is   compactly supported in a given open subset  $U,$ we have
\begin{align*}
\int_\Omega\left\langle A_j(x,\partial)u,\psi\right\rangle {\rm d}V=\int_\Omega\left\langle u,A_j^*(x,\partial)\psi\right\rangle {\rm d}V
\end{align*}
where $\langle\cdot,\cdot\rangle$ is the inner product of $E^{(j)},$ and $A_j^*(x,\partial)$ is the formal adjoint operator of $A_j(x,\partial).$
 Set
\begin{align*}
\mathcal J_{A_j}(\partial \Omega,U):=\left\{u\in C^\infty\left(U,E^{(j)}\right);u\ {\rm has\ zero\ Cauchy\ data\ on}\ \partial \Omega\ {\rm for}\ A_j(x,\partial)\right\}.
\end{align*}
  Then $U\rightarrow \mathcal J_{A_j}(\partial \Omega,U)$ is a sheaf. We must have
\begin{align*}
A_j(x,\partial)\mathcal J_{A_j}(\partial \Omega,U)\subset\mathcal J_{A_{j+1}}(\partial \Omega,U).
\end{align*}
This is because
\begin{equation*}\begin{aligned}
\int_\Omega\left\langle A_{j+1}(x,\partial)A_{j}(x,\partial)u,\psi\right\rangle{\rm d}V=&0=\int_\Omega\left\langle u,A^*_{j}(x,\partial)A^*_{j+1}(x,\partial)\psi\right\rangle{\rm d}V\\=&\int_\Omega\left\langle A_{j}(x,\partial)u,A^*_{j+1}(x,\partial)\psi\right\rangle{\rm d}V,
\end{aligned}\end{equation*}
for any $u\in\mathcal J_{A_j}(\partial \Omega,U)$ and compactly supported $\psi\in C^\infty\left(U,E^{(j+1)}\right),$ by $$A_{j+1}(x,\partial)A_j(x,\partial)=0,\quad A^*_j(x,\partial) A^*_{j+1}(x,\partial)=0.$$
Setting the quotient sheaf
\begin{align*}
Q^{(j)}(U)=\frac{\Gamma\left(U,E^{(j)}\right)}{\mathcal J_{A_j}\left(\partial \Omega,U\right)},
\end{align*}
we obtain a quotient complex of the form
\begin{align*}
Q^{(0)}(\partial \Omega)\xrightarrow{\widehat A_0} Q^{(1)}(\partial \Omega)\xrightarrow{\widehat A_1} Q^{(2)}(\partial \Omega)\xrightarrow{\widehat A_2}\dots
\end{align*}
where $\widehat A_j$ is the operator  induced by   $A_j(x, \partial),$ but is not necessarily a differential operator (cf. \cite{AN}).

 Denote $\partial_{10}:=\frac{\partial}{\partial x_1^0}. 
 $    Define  the vector fields $Z_{\bar j},Z_{j},\overleftarrow{Z_{\bar j}},\overleftarrow{Z_j}$ for $j=1,2,$ 
by 
\begin{equation}\begin{aligned}\label{Zj'}
 {Z_{\bar j}}f =&{\partial_{\bar j}}f-\rho_{\bar j}\cdot\left( \rho_{\bar1} ^{-1}\cdot {\partial_{\bar 1}}f\right), \qquad Z_jf=\partial_jf-\rho_j\cdot\left(\rho^{-1}_1\cdot\partial_1f \right), \\f\overleftarrow{Z_{\bar j}} =&f\overleftarrow{\partial_{\bar j}}-\left(f\overleftarrow{\partial_{\bar 1}}\cdot\rho_{\bar1}^{-1}\right)\cdot\rho_{\bar j}, \qquad f\overleftarrow{Z_{ j}} =f\overleftarrow{\partial_{ j}}-\left(f\overleftarrow{\partial_{ 1}}\cdot\rho_{ 1}^{-1}\right)\cdot\rho_{j},
\end{aligned}\end{equation}  
and define $T,\overline T,\overleftarrow{T},\overleftarrow{\overline T}$ by  \begin{equation}\begin{aligned}\label{T}{\overline T}f=&\rho_{\bar1}^{-1}\cdot {\partial_{\bar 1}}f-\partial_{10}f,\qquad  {T}f=\rho_{ 1}^{-1}\cdot {\partial_{  1}}f-\partial_{10}f,\\ f\overleftarrow{\overline T}=& f\overleftarrow{\partial_{\bar 1}}\cdot\rho_{\bar1}^{-1}-\partial_{10}f,\qquad f\overleftarrow{{T}}= f\overleftarrow{\partial_{  1}}\cdot\rho_{ 1}^{-1}-\partial_{10}f,\end{aligned}\end{equation}  
for   sufficiently smooth $\mathbb O$-valued or scalar  functions $f,$ where $$\rho_j=\partial_j\rho,\quad \rho_{\bar j}=\partial_{\bar j}\rho,\quad j=1,2.$$ They are all tangential operators  because $${Z_{\bar 2}}\rho=\rho\overleftarrow{Z_{\bar 2}}={Z_{  2}}\rho=\rho\overleftarrow{Z_{  2}}=T\rho=\overline{T}\rho=\rho\overleftarrow{T}=\rho \overleftarrow{\overline{T}}
=0,$$ by definition (\ref{Zj'}) and (\ref{defin'}) of $\rho$. Obviously only for $j=2$ operators do not vanish, since   $$Z_{\bar 1}=Z_1=\overleftarrow{Z_{\bar 1}}=\overleftarrow{Z_1}=0,$$ in (\ref{Zj'}) by alternativity.   
Then, left and right octonionic Dirac operators can be written as \begin{equation}\begin{aligned}\label{nablaZ'} 
 {\partial_{\bar j}}f=& {Z_{\bar j}}f +\rho_{\bar j} \cdot {\overline T}f+\rho_{\bar j}\cdot\partial_{10}f ,\quad {\partial_{  j}}f= {Z_{  j}}f +\rho_{  j} \cdot {  T}f+\rho_{  j}\cdot\partial_{10}f
, 
\\ f\overleftarrow{\partial_{\bar j}}=& f\overleftarrow{Z_{\bar j}} +f\overleftarrow{\overline T}\cdot\rho_{\bar j} +\partial_{10}f\cdot\rho_{\bar j}, \quad f\overleftarrow{\partial_{  j}}= f\overleftarrow{Z_{  j}} +f\overleftarrow{  T}\cdot\rho_{  j} +\partial_{10}f\cdot\rho_{  j}. 
\end{aligned}\end{equation}

\subsection{The characterization of the sheaves $\mathcal J_j(U)$ 
}
\begin{prop}\label{J13}Suppose that $U\cap\partial \Omega\neq\emptyset$. Then,
\\
{\rm (1)}
$\mathcal J_0(U)$ consists of real functions $f$ such that $f=O\left(\rho^2\right) $; \\
{\rm (2)} $\mathcal J_1(U)$ consists of  \begin{equation}\begin{aligned}\label{J3'}
u=\left(\begin{matrix} \left|\rho_{1}\right|^2    &\rho_{\bar 1}  \rho_{2} \\\rho_{\bar 2}\rho_{  1}   & \left|\rho_{2} \right|^2   \end{matrix}\right)\cdot H \quad \left({\rm mod} \ \rho\mathcal H_2(\mathbb O)\right), 
\end{aligned}\end{equation}
 for some  $H\in C^\infty\left(U,\mathbb R\right).$
\end{prop}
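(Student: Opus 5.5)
We unwind the definition of zero Cauchy data through the integration-by-parts formulas of Proposition \ref{ibp}. For $u\in C^\infty(U,E^{(j)})$ and $\psi\in C_c^\infty(U,E^{(j+1)})$, the difference $\int_\Omega\langle \mathcal D_j u,\psi\rangle-\int_\Omega\langle u,\mathcal D_j^*\psi\rangle$ is a boundary integral. Since $\psi$ and its first derivatives can be prescribed arbitrarily on $\partial\Omega\cap U$, the condition $u\in\mathcal J_j(U)$ is equivalent to the vanishing of the boundary integrands. These are algebraic in the jet of $u$ on $\partial\Omega$: the value of $u$ and its normal derivative for the second order operator $\mathcal D_0$, and only the value of $u$ for the first order operator $\mathcal D_1$.

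\textbf{Part (1).} We apply Proposition \ref{ibp} twice to each entry of $(\mathcal D_0 f,\psi)$, using the expression of $\mathcal D_0^*$ in Proposition \ref{Dstar}. The boundary terms then have the schematic form
\[\int_{\partial\Omega}\left\langle \sigma_0(\nabla\rho)f,\ \text{(first derivatives of }\psi)\right\rangle+\left\langle \text{(first derivatives of }f),\ \text{(symbol in }\rho)\cdot\psi\right\rangle .\]
Choosing $\psi$ with $\psi|_{\partial\Omega}=0$ but arbitrary normal derivative, we obtain $\sigma_0(\nabla\rho)f=0$ on $\partial\Omega$. Since $\nabla\rho\neq0$, Proposition \ref{pell} gives that $\sigma_0$ is injective. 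One should do this with the real gradient: the symbol $\sigma_0$ is a polynomial identity, so injectivity at the real covector $\nabla\rho$, where $|\rho_1|^2+|\rho_2|^2>0$, follows exactly as in the proof of Proposition \ref{pell}. Hence $f=0$ on $\partial\Omega$, so $f=\rho g$.

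Substituting $f=\rho g$, the remaining boundary term reduces to $\int_{\partial\Omega}\langle \sigma_0(\nabla\rho) g,\psi\rangle\cdot(\ldots)$. The product rule gives $\mathcal D_0(\rho g)=g\,\mathcal D_0\rho+(\text{cross terms in }\nabla\rho\otimes\nabla g)+\rho\mathcal D_0 g$, and on the boundary the leading coefficient is $\sigma_0(\nabla\rho)g$. Taking $\psi$ arbitrary on $\partial\Omega$ forces $\sigma_0(\nabla\rho)g=0$, so $g=0$ on $\partial\Omega$ and $f=O(\rho^2)$. Conversely, if $f=O(\rho^2)$ then all boundary terms vanish, since each involves $f$ or its first derivatives, which vanish on $\partial\Omega$.

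\textbf{Part (2).} $\mathcal D_1$ is first order, and Proposition \ref{ibp} applied entrywise gives
\[\int_{\Omega}\langle\mathcal D_1u,\psi\rangle-\int_\Omega\langle u,\mathcal D_1^*\psi\rangle=\int_{\partial\Omega}\langle\sigma_1(\nabla\rho)u,\psi\rangle\frac{{\rm d}S}{|{\rm grad}\rho|},\]
where $\sigma_1(\nabla\rho)u=\begin{pmatrix}u_{\bar12}\rho_{\bar2}-\rho_{\bar1}u_{\bar22}\\ u_{\bar21}\rho_{\bar1}-\rho_{\bar2}u_{\bar11}\end{pmatrix}$. Thus $u\in\mathcal J_1(U)$ if and only if $\sigma_1(\nabla\rho)u=0$ on $\partial\Omega$. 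By exactness at $\mathcal H_2(\mathbb O)$ in the symbol sequence, which is step (i) of the proof of Proposition \ref{pell} carried out for the real covector $\rho_j$ in place of $\boldsymbol\xi_j$, we have $\ker\sigma_1(\nabla\rho)=\mathrm{Im}\,\sigma_0(\nabla\rho)$. Hence on $\partial\Omega$,
\[u=\sigma_0(\nabla\rho)H=\begin{pmatrix}|\rho_1|^2&\rho_{\bar1}\rho_2\\ \rho_{\bar2}\rho_1&|\rho_2|^2\end{pmatrix}H\]
for a real $H$; for instance $H=u_{\bar11}/|\rho_1|^2$, noting that $|\rho_1|\ge1$ by (\ref{defin'}). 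Extending $H$ smoothly off the boundary gives (\ref{J3'}) mod $\rho$. The converse is immediate from $\sigma_1\circ\sigma_0=0$, established in Proposition \ref{complex}.

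\textbf{Main obstacle.} The delicate step is the boundary computation in part (1). One must track the nonassociative products when integrating by parts twice, especially in the term $\partial_{\bar1}f\overleftarrow{\partial_2}$, and check that the boundary terms genuinely take the form $\sigma_0(\nabla\rho)$ applied to the value and to the normal derivative. We would handle this using Lemma \ref{asso} (ii)–(iii) inside ${\rm Re}$, exactly as in Proposition \ref{ibp}. Because $f$ is real, the products $\rho_{\bar1}f\rho_2$ associate trivially.
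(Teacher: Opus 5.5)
Your proposal is correct and follows essentially the paper's own argument. For $\mathcal J_0$ you integrate by parts twice, test with $\psi=\rho v$ to force $f|_{\partial\Omega}=0$, and then use the remaining first-order boundary term to force $\partial_j f|_{\partial\Omega}=0$; for $\mathcal J_1$ you reduce to the boundary conditions $u_{\bar12}\rho_{\bar2}=\rho_{\bar1}u_{\bar22}$, $u_{\bar21}\rho_{\bar1}=\rho_{\bar2}u_{\bar11}$ and solve them with the same algebra as step (i) of the ellipticity proof. Your packaging via $\sigma_0(\nabla\rho)$, $\sigma_1(\nabla\rho)$ and the substitution $f=\rho g$ is a cleaner rephrasing, and your use of $|\rho_1|\ge 1$ (from the normalized local form of $\rho$) simply replaces the paper's separate treatment of the degenerate cases $|\rho_1|=0$ or $|\rho_2|=0$.
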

\begin{proof} (1) By definition,   $\mathcal J_0(U)$ is the set of  all $f\in C^\infty\left(U,\mathbb R\right)$ satisfying
\begin{align}\label{D0bp}
 \int_\Omega\left\langle \mathcal D_0f ,u\right\rangle {\rm d}V=\int_\Omega\left\langle f,\mathcal D_0^*u \right\rangle {\rm d}V,
\end{align}for any $u\in C_c\left(U,\mathcal H_2(\mathbb O)\right)$. By definition   (\ref{inner}), we have
\begin{equation}\begin{aligned}\label{u}
\left(\mathcal D_0f,u\right) 
=\int_{\Omega}\left(\left\langle\Delta_1f , u_{\bar 1 1} \right\rangle+\left\langle{\partial_{ \bar 1}}f\overleftarrow{\partial_{ 2}}, u_{\bar 1 2} \right\rangle+\left\langle{\partial_{ \bar 2}}f\overleftarrow{\partial_{ 1}}, u_{\bar 21} \right\rangle+\left\langle\Delta_2f , u_{\bar 2 2} \right\rangle\right){\rm d}V,\end{aligned}\end{equation} 
where 
\begin{equation*}\begin{aligned}\label{dfdu} \int_\Omega\left\langle{\partial_{ \bar i}}f\overleftarrow{\partial_{j}}, u_{\bar ij} \right\rangle {\rm d}V=&-\int_\Omega\left\langle f\overleftarrow{\partial_{j}}, {\partial_{i}}u_{\bar ij} \right\rangle {\rm d}V+\int_{\partial\Omega}\left\langle \rho_{\bar i} \cdot f\overleftarrow{\partial_{ j}}, u_{\bar ij} \right\rangle  \frac{{\rm d}S}{|{\rm grad}\rho|}\\=  &\int_\Omega\left\langle f, \left({\partial_{i}}u_{\bar ij} \right) \overleftarrow{\partial_{\bar j}}\right\rangle {\rm d}V+\int_{\partial\Omega}\left( \left\langle\rho_{\bar i} \cdot \partial_j f, u_{\bar i j} \right\rangle -\left\langle f\cdot\rho_j, {\partial_i}u_{\bar ij} \right\rangle \right)\frac{{\rm d}S}{|{\rm grad}\rho|},  
\end{aligned}\end{equation*}
by  using  integration by parts  (\ref{bp}) twice. Here $ f \overleftarrow{\partial_{ j}}=\partial_j f$ since $f$ is real. So we get  
 \begin{equation}\begin{aligned}\label{D0bp1}\left(\mathcal D_0f,u\right)
=  &\left( f,\mathcal D_0^* u\right)-\sum_{i,j=1}^2\int_{\partial\Omega}\left(\left\langle f \rho_j, {\partial_i}u_{\bar ij} \right\rangle+\left\langle 
\rho_{\bar i} \cdot\partial_j f, u_{\bar ij} \right\rangle\right)\frac{{\rm d}S}{|{\rm grad}\rho|}, 
\end{aligned}\end{equation}by expression of $\mathcal D_0^*$ in Proposition \ref{Dstar}. It is easy to see that  if $f=O(\rho^2), $ then (\ref{D0bp1}) implies (\ref{D0bp}). On the other hand,   if  we take $u_{\bar ij}=\rho\cdot v_{\bar ij},$ with $\left(v_{\bar ij}\right)\in C_c^2\left(U,\mathcal H_2(\mathbb O)\right)$ in (\ref{D0bp1}), then (\ref{D0bp}) implies
\begin{align*}
\sum_{i,j=1}^2\int_{\partial\Omega}\left\langle f\rho_j,\rho_iv_{\bar ij}\right\rangle\frac{{\rm d}S}{|{\rm grad}\rho|} 
=0.
\end{align*} Taking  $v_{\bar ij}= f\rho_{\bar i}\rho_j,$ we get \begin{align*}\int_{\partial\Omega} |f|^2|{\rm grad}\rho|^4 \frac{{\rm d}S}{|{\rm grad}\rho|}=0,\end{align*}
thus we have $f\equiv0$ on $\partial\Omega.$ Now (\ref{D0bp}) together with (\ref{D0bp1}) implies   \begin{align*}
\int_{\partial\Omega}{\rm tr}\left(\left(\rho_{\bar i}\cdot\partial_jf\right)\left(u_{\bar ij}\right)\right)\frac{{\rm d}S}{|{\rm grad}\rho|}=0,
\end{align*}
for any $\left(u_{\bar ij}\right)\in C_c^2\left(U,\mathcal H_2(\mathbb O)\right).$ Hence\begin{align*}
\rho_{\bar i}\cdot\partial_jf=0
\end{align*}on $\partial \Omega$ for any $i,j=1,2.$ 
Taking $i$ such that $\left|\rho_{i}\right| \neq0$ in a neighbourhood, we get   
$\left.\left|\rho_{i}  \right|^2 \cdot \partial_jf \right|_{\partial \Omega}=0,$    thus   we have    
 $\left.\partial_{ j}f \right|_{\partial \Omega\cap U}=0.$ So $f=O\left(\rho^2\right).$ \\
(2) By definition, $\mathcal J_1(U)$ is the set of all  $u\in C^\infty\left(U,\mathcal H_2(\mathbb O)\right)$ satisfying
\begin{align*}
 \int_\Omega\left\langle \mathcal D_1u,v \right\rangle {\rm d}V= \int_\Omega\left(\left\langle  u_{\bar 1 2}\overleftarrow{\partial_{ \bar 2}}-{\partial_{\bar 1}}u_{\bar 2 2},v_1 \right\rangle+\left\langle u_{ \bar 2 1}\overleftarrow{\partial_{\bar 1}}-{\partial_{\bar 2}}u_{\bar 11},v_2 \right\rangle\right) {\rm d}V =\int_\Omega\left\langle u,\mathcal D_1^*v \right\rangle {\rm d}V,
\end{align*}for any $v=\left(\begin{matrix}v_1\\v_2\end{matrix}\right)\in C_c^2\left(U, \mathbb O^2\right)$. 
For  $v_1,v_2 \in C_c^2\left(U,\mathbb O\right)$ with compact support, we have
\begin{equation*}\begin{aligned}
\int_\Omega\left\langle u_{\bar ij}\overleftarrow{\partial_{\bar j}},v_i\right\rangle{\rm d}V=&-\int_\Omega\left\langle u_{\bar ij},v_i\overleftarrow{\partial_{j}}\right\rangle{\rm d}V+\int_{\partial \Omega}\left\langle u_{\bar ij}\cdot\rho_{\bar j} ,v_i \right\rangle\frac{{\rm d}S}{|{\rm grad}\rho|}, 
\\ \int_\Omega\left\langle {\partial_{\bar i}} u_{\bar jj},v_i\right\rangle{\rm d}V=&-\int_\Omega\left\langle u_{\bar jj},{\partial_{i}}  v_i\right\rangle{\rm d}V+\int_{\partial \Omega}\left\langle \rho_{\bar i} \cdot u_{\bar jj},v_i \right\rangle\frac{{\rm d}S}{|{\rm grad}\rho|}\\=&-\int_\Omega\left\langle u_{\bar jj},\frac{{\partial_{i}}  v_i+\bar v_i\overleftarrow{\partial_{\bar i}}}{2}\right\rangle{\rm d}V+\int_{\partial \Omega}\left\langle \rho_{\bar i} \cdot u_{\bar jj},v_i\right\rangle\frac{{\rm d}S}{|{\rm grad}\rho|},  
\end{aligned}\end{equation*} for $i\neq j,$
by  using   integration by parts  (\ref{bp}). Then, we get 
\begin{equation*}\begin{aligned}
\left(\mathcal D_1u,v\right)=\int_\Omega\left\langle u,\mathcal D_1^*v\right\rangle  {\rm d}V&+ \int_{\partial \Omega}\left(\left\langle u_{\bar 12}\cdot\rho_{\bar 2} -\rho_{\bar 1} \cdot u_{\bar 22},v_1 \right\rangle + \left\langle u_{\bar 21}\cdot\rho_{\bar 1} -\rho_{\bar 2} \cdot u_{\bar 11},v_2 \right\rangle\right)\frac{{\rm d}S}{|{\rm grad}\rho|},
\end{aligned}\end{equation*}  by   expression    $\mathcal D_1^*$   in (\ref{ddd}) and (\ref{2.7}).
  Since $v_{1}$ and $v_{2}$  are arbitrarily  chosen, $u\in\mathcal J_1(U)$  if and only if
\begin{equation}\begin{aligned}\label{j1}
 u_{\bar 12}\cdot\rho_{\bar 2} -\rho_{\bar 1} \cdot u_{\bar 22} =0,\quad  u_{\bar 21}\cdot\rho_{\bar 1} -\rho_{\bar 2} \cdot u_{\bar 11} =0,\quad {\rm on}\ \partial \Omega.
\end{aligned}\end{equation} 
It is obvious that $u$ satisfying (\ref{J3'}) must satisfy  (\ref{j1}). Now suppose $u$ satisfies (\ref{j1}). Then, 
\begin{equation}\begin{aligned}\label{122}
\left.\rho_{  1} \cdot\left(u_{\bar 12}\cdot\rho_{\bar 2} \right)\right|_{\partial \Omega}=&\left.\left|\rho_{ 1} \right|^2 \cdot u_{\bar 22}\right|_{\partial \Omega} ,\\ \left.\rho_{2} \cdot \left(u_{\bar 21}\cdot\rho_{\bar 1} \right)\right|_{\partial \Omega}=&\left.\left|\rho_{2} \right|^2 \cdot u_{\bar 11}\right|_{\partial \Omega},
\end{aligned}\end{equation}which are both real, by alternativity.
This together with  $
{\rm Re}\left(\rho_{ 1} \cdot\left(u_{\bar 12}\cdot\rho_{\bar 2} \right)\right)={\rm Re}\left(\rho_{2} \cdot \left(u_{\bar 21}\cdot\rho_{\bar 1} \right)\right) 
$ by Lemma \ref{asso} (iii), implies  
\begin{align}
\left.\left|\rho_{ 1} \right|^2\cdot u_{\bar 22}\right|_{\partial \Omega}=\left.\left|\rho_{ 2} \right|^2 \cdot u_{\bar 11}\right|_{\partial \Omega}.
\end{align}  Hence   for $\left|\rho_{  1}\right|\neq0,\left| \rho_{2} \right|\neq0,$ we have \begin{align} 
\left.\frac{u_{\bar 22}}{\left|\rho_{ 2} \right|^2} \right|_{\partial \Omega}=\left.\frac{u_{\bar 11}}{\left|\rho_{  1} \right|^2} \right|_{\partial \Omega}=:H.
\end{align}
So we must have \begin{align}\label{uo}
  u_{\bar ij}=\rho_{\bar i}\rho_{j} H+O(\rho),
  \end{align} by (\ref{j1}). Thus,      (\ref{J3'}) follows. If $\left|\rho_{  1} \right|=0,$ we must  have $\left|\rho_{2} \right|\neq0,$ then $\left.u_{\bar 11}\right|_{\partial\Omega}=\left.u_{\bar 12}\right|_{\partial\Omega}=0$ by (\ref{j1}). Then,  (\ref{J3'}) still holds by letting $H=\frac{u_{\bar22}}{\left|\rho_{ 2} \right|^2}.$ The case of  $\left|\rho_{2} \right|=0$ is similar.
The  proposition is proved. \end{proof}

\subsection{The tangential octonionic Hessian operator}
\begin{lem}For $\mathbb R$-valued function $u_0,u_1 $  independent of $x_{1}^0,$ we have   
 \begin{equation*}\begin{aligned}
\bigg[2\partial_{\bar i}\partial_j\left(u_0+\rho u_1\right)\bigg]_{2\times2}   = &\bigg[\left(Z_{\bar i}u_0  + \rho_{\bar i}\cdot\overline Tu_0\right) \overleftarrow{Z_{j}}
+Z_{\bar i}\left(u_0\overleftarrow{Z_j}+ u_0\overleftarrow{  T}\cdot\rho_j\right)\\
&+u_1\left(\rho_{\bar i}\overleftarrow{Z_j}+Z_{\bar i}\rho_j\right)+\Phi_{\bar i}\rho_j+\rho_{\bar i}\Phi_{ j}-\rho_{\bar i}\rho_j\Psi \bigg]_{2\times 2}
  \quad \left({\rm mod}\ \big[\rho_{\bar i}\rho_j\big]_{2\times 2},\ \rho\right),
\end{aligned}\end{equation*} where    \begin{equation}\begin{aligned}\label{Phij}
\Phi_{  j}:=&  \overline T\left(u_0 \overleftarrow{Z_{  j}}\right) +\overline T \left(u_0  \overleftarrow{T}\cdot \rho_{  j}  \right)  + 2 u_1 \overleftarrow{Z_{j}}  +u_1 \overleftarrow{T}\cdot\rho_j+u_1\cdot \overline T\rho_j ,\\\Psi:=& \frac{2{\rm Re}\left(\rho_{\bar 1}\Phi_{1}\right)}{\left|\rho_1\right|^2},
\end{aligned}\end{equation} and   $\Phi_{\bar j}:=\overline{\Phi_{  j}}.$   
\end{lem}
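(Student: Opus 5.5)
The identity is a computation, and I would run it entry by entry for $(i,j)\in\{1,2\}^2$. The only tools are the decompositions (\ref{nablaZ'}), the Leibniz rule, and alternativity/Lemma \ref{asso}. Since $u_0,u_1$ are real and do not depend on $x_1^0$, we have $\partial_{10}u_0=\partial_{10}u_1=0$. Also $\partial_{10}\rho=1$ by (\ref{defin'}). For a real $f$ we have $\partial_{\bar i}\partial_j f=\partial_{\bar i}\left(f\overleftarrow{\partial_j}\right)$, and I will use the symmetric splitting
\[
2\partial_{\bar i}\partial_j f=\left(\partial_{\bar i}f\right)\overleftarrow{\partial_j}+\partial_{\bar i}\left(f\overleftarrow{\partial_j}\right).
\]
Both sides are legitimate here: for real $f$ the mixed second derivatives $\sum_{a,b}e_a\bar e_b\,\partial_{x_i^a}\partial_{x_j^b}f$ do not depend on where the brackets sit, because only two basis elements occur. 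This splitting is what makes both terms $(Z_{\bar i}u_0+\rho_{\bar i}\overline Tu_0)\overleftarrow{Z_j}$ and $Z_{\bar i}(u_0\overleftarrow{Z_j}+u_0\overleftarrow T\rho_j)$ appear.

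\textbf{Step 1: expand the $u_0$ part.} Apply (\ref{nablaZ'}) to the inner derivative: $\partial_{\bar i}u_0=Z_{\bar i}u_0+\rho_{\bar i}\overline Tu_0$, and similarly on the right. Then apply (\ref{nablaZ'}) again to the outer derivative. This gives
\[
g\overleftarrow{\partial_j}=g\overleftarrow{Z_j}+\left(g\overleftarrow T\right)\rho_j+\partial_{10}g\,\rho_j .
\]
The tangential piece gives the first displayed term. The remaining pieces all have the form $(\cdot)\rho_j$ or $\rho_{\bar i}(\cdot)$. The ones of the form $\rho_{\bar i}\cdot(\text{real})\cdot\rho_j$ are absorbed modulo $[\rho_{\bar i}\rho_j]$. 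The others give the $\overline T(u_0\overleftarrow{Z_j})$ and $\overline T(u_0\overleftarrow T\rho_j)$ contributions to $\rho_{\bar i}\Phi_j$ and, by conjugation, to $\Phi_{\bar i}\rho_j$. In these manipulations I would use that the entries involve only the two octonions $\rho_{\bar i},\rho_j$ together with real coefficients, so that by Lemma \ref{asso}(iv) or Artin's theorem they can be re-bracketed freely.

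\textbf{Step 2: expand the $\rho u_1$ part.} By the Leibniz rule, $\partial_{\bar i}\left((\rho u_1)\overleftarrow{\partial_j}\right)$ produces $\rho(\dots)$, which is $0$ mod $\rho$, plus the terms
\[
u_1\,\partial_{\bar i}\rho_j+\rho_{\bar i}\left(u_1\overleftarrow{\partial_j}\right)+\left(\partial_{\bar i}u_1\right)\rho_j .
\]
I then decompose $\partial_{\bar i}\rho_j$ and $\rho_{\bar i}\overleftarrow{\partial_j}$ once more by (\ref{nablaZ'}). This gives $u_1(\rho_{\bar i}\overleftarrow{Z_j}+Z_{\bar i}\rho_j)$, the terms $2u_1\overleftarrow{Z_j}$ and $u_1\overleftarrow T\rho_j$ (which are symmetrized between the two halves), and $u_1\overline T\rho_j$. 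Using $\partial_{10}\rho_j$ and $\partial_{10}u_1=0$ the normal parts collapse, so what is left is exactly the remaining summands of $\Phi_j$.

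\textbf{Step 3: the $\Psi$ correction.} Collecting everything, the normal-normal contributions come out as $\Phi_{\bar i}\rho_j+\rho_{\bar i}\Phi_j$ plus a term $\rho_{\bar i}c\rho_j$ with $c$ real. I expect this to be the main obstacle. Since we work modulo $[\rho_{\bar i}\rho_j]$, which is a real multiple of a single matrix, the coefficient $c$ is only defined up to the choice made in the statement. I would therefore fix the representative by requiring the $(1,1)$ entry to match. Both sides have $(1,1)$ entries that are real, and the $\Phi$ part of that entry is $2{\rm Re}(\rho_{\bar1}\Phi_1)$. Dividing by $|\rho_1|^2$ (which is nonzero, since $\rho_{\bar1}$ is invertible near the boundary by (\ref{defin'})) yields $\Psi$.

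I would then check the off-diagonal and $(2,2)$ entries for consistency using the real-part identity Lemma \ref{asso}(iii) and the Moufang identities (\ref{M1})--(\ref{M3}). The delicate points are the associativity of products such as $\overline T(u_0\overleftarrow T\rho_j)$ with $\rho_{\bar i}$, and the vanishing of $Z_{\bar1},\overleftarrow{Z_1}$, which is what makes the $i=1$ or $j=1$ entries reduce consistently.
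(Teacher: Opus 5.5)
Your plan is the paper's plan: the splitting $2\partial_{\bar i}\partial_j f=(\partial_{\bar i}f)\overleftarrow{\partial_j}+\partial_{\bar i}(f\overleftarrow{\partial_j})$, the decomposition (\ref{nablaZ'}), and $\Psi$ chosen to kill the $(1,1)$ entry. But the step it rests on fails. In Step 3 you assert that the normal--normal remainder is $\rho_{\bar i}c\rho_j$ with $c$ real. You support this with the remark in Step 1 that only $\rho_{\bar i}$, $\rho_j$ and real coefficients occur, so Artin's theorem permits free re-bracketing. That remark is wrong, because a third octonion is present: $w:=u_1\overleftarrow{T}=(\partial_1u_1)\rho_1^{-1}$, with $\bar w=\overline Tu_1$, is genuinely octonion-valued (as are $\overline Tu_0$ and $u_0\overleftarrow{Z_j}$).

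Modulo $\rho$, the $\rho u_1$ part equals $2\rho_{\bar i}(u_1\overleftarrow{\partial_j})+2(\partial_{\bar i}u_1)\rho_j+u_1(\rho_{\bar i}\overleftarrow{\partial_j}+\partial_{\bar i}\rho_j)$. So it contains $2\rho_{\bar i}(w\rho_j)+2(\rho_{\bar i}\bar w)\rho_j$. By contrast, $\rho_{\bar i}\Phi_j+\Phi_{\bar i}\rho_j$ supplies only one copy of $\rho_{\bar i}(w\rho_j)+(\rho_{\bar i}\bar w)\rho_j$. Every other term, including all $u_0$-terms, matches exactly without any re-bracketing. The surplus copy is
\[
\rho_{\bar i}(w\rho_j)+(\rho_{\bar i}\bar w)\rho_j=2{\rm Re}(w)\,\rho_{\bar i}\rho_j-\{\rho_{\bar i},{\rm Im}\,w,\rho_j\}.
\]
The associator vanishes for $i=j$ by Lemma \ref{asso}(iv), but not for $i\neq j$. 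Take a boundary point where $\rho_{\bar 1}=1+te_1$, $\partial_1u_1=e_2$ and $\rho_2=e_4$. There ${\rm Re}\,w=0$ and $\{\rho_{\bar 1},w,\rho_2\}=\frac{t}{1+t^2}\left(\{e_1,e_2,e_4\}+t\{e_1,e_2e_1,e_4\}\right)$, which is nonzero for small $t\neq0$ because $\{e_1,e_2,e_4\}\neq0$.

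Working modulo $[\rho_{\bar i}\rho_j]_{2\times2}$ removes only real multiples of one fixed Hermitian matrix. The diagonal entries of the discrepancy already determine that multiple, so neither $\Psi$ nor any other choice of representative can absorb the off-diagonal associator. The consistency check you postpone to the end would therefore fail. With $\Phi_j$ as in (\ref{Phij}), the identity is false in the $(1,2)$ and $(2,1)$ entries.

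The paper's proof has the same slip. Its expansion (\ref{pu1}) carries the factor $2$ on both $u_1\overleftarrow{Z_j}$ and $u_1\overleftarrow{T}\cdot\rho_j$, but (\ref{Phij}) keeps the $2$ only on the first. The summation step thus tacitly discards $\rho_{\bar i}(w\rho_j)+(\rho_{\bar i}\bar w)\rho_j$ modulo $[\rho_{\bar i}\rho_j]_{2\times2}$, which is legitimate only in an associative algebra.

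Suppose $\Phi_j$ is defined with $2u_1\overleftarrow{T}\cdot\rho_j$ in place of $u_1\overleftarrow{T}\cdot\rho_j$. Then your Steps 1 and 2 give the right-hand side without the $\Psi$-term exactly, modulo $\rho$. They use only distributivity, $\partial_{10}u_0=\partial_{10}u_1=\partial_{10}\rho_{\bar i}=\partial_{10}\rho_j=0$, and the conjugation identities $\overline{\overline Tg}=\bar g\overleftarrow{T}$ and $\overline{g\overleftarrow{Z_i}}=Z_{\bar i}\bar g$. The term $-\rho_{\bar i}\rho_j\Psi$ is then absorbed trivially, and no Moufang identity is needed.
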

\begin{proof}
For $\mathbb R$-valued function  $u_0,$ which is  independent of $x_{1}^0,$  
 we have \begin{equation}\begin{aligned}\label{pu0}
 2\partial_{\bar i}\partial_ju_0=&\left(\partial_{\bar i}u_0\right)\overleftarrow{\partial_j}+\partial_{\bar i}\left(u_0\overleftarrow{\partial_j}\right) \\=&\left(\partial_{\bar i}u_0\right)\overleftarrow{Z_j}+\left(\left(\partial_{\bar i}u_0\right)\overleftarrow{  T}\right)\rho_j +Z_{\bar i}\left(u_0\overleftarrow{\partial_{j}}\right)+\rho_{\bar i}\left(\overline T\left(u_0\overleftarrow{\partial_{j}}\right)\right) \\=&\left(Z_{\bar i}u_0\right)\overleftarrow{Z_j} +\left(\rho_{\bar i}\cdot\overline Tu_0 \right)\overleftarrow{Z_{j}}+\left[\left(Z_{\bar i}u_0\right)\overleftarrow{  T} +\left(\rho_{\bar i}\cdot\overline Tu_0 \right)\overleftarrow{  T} \right]\rho_j\\&+Z_{\bar i}\left(u_0\overleftarrow{Z_j}\right)+Z_{\bar i}\left( u_0\overleftarrow{  T}\cdot\rho_j\right)+\rho_{\bar i}\left[\overline T\left(u_0\overleftarrow{Z_j}\right)+  \overline T\left( u_0\overleftarrow{  T}\cdot\rho_j\right)\right], 
\end{aligned}\end{equation}by (\ref{nablaZ'}), where the second and third identity  hold by using $$\partial_{10}\left(\partial_{\bar i}u_0\right)=\partial_{\bar i}\left(\partial_{10}u_0\right) =0.$$
For $\mathbb R$-valued function $u_1$ independent of $x_1^0,$ we have 
  \begin{equation}\begin{aligned}\label{pu1}
 2\partial_{\bar i}\partial_j\left(\rho u_1\right)=&\left(\partial_{\bar i}\left(\rho u_1\right)\right)\overleftarrow{\partial_j}+\partial_{\bar i}\left(\left(\rho u_1\right)\overleftarrow{\partial_j}\right)\\=&\left(\rho_{\bar i}u_1+\rho\cdot\partial_{\bar i}u_1\right)\overleftarrow{\partial_j}+\partial_{\bar i}\left(u_1\rho_j+\rho\cdot u_1\overleftarrow{\partial_j}\right)\\=& 2\rho_{\bar i}\cdot u_1\overleftarrow{\partial_j}+2\partial_{\bar i}u_1\cdot\rho_j+u_1\left(\rho_{\bar i}\overleftarrow{\partial_{j}}+ \partial_{\bar i}\rho_j\right) 
  \qquad  (   {\rm mod}\ \rho)\\=&2\rho_{\bar i}\left(u_1 \overleftarrow{Z_j}+u_1 \overleftarrow{T}\cdot\rho_j\right)+2\left(Z_{\bar i}u_1+\rho_{\bar i}\cdot\overline Tu_1\right)\rho_j \\& + u_1\left(\rho_{\bar i}\overleftarrow{Z_j}+\rho_{\bar i}\overleftarrow{  T}\cdot \rho_j+ Z_{\bar i} \rho_j +    \rho_{\bar i}\cdot \overline T\rho_j  \right) \qquad   ( {\rm mod}\ \rho), 
\end{aligned}\end{equation} 
where the third identity hold by $$\partial_{10}\left(\rho_{\bar i}u_1\right)=\partial_{10}\left(\partial_{\bar i}\rho\right)u_1+\rho_{\bar i}\partial_{10}u_1=\partial_{\bar i}\left(\partial_{10}\rho\right)u_1=0.$$  
Now the summation of (\ref{pu0}) and (\ref{pu1}) gives us  \begin{equation*}\begin{aligned}\label{cau}
 \bigg[2\partial_{\bar i}\partial_j\left(u_0+\rho u_1\right)\bigg]_{2\times2} =&\bigg[\left(Z_{\bar i}u_0\right)\overleftarrow{Z_j} +\left(\rho_{\bar i}\cdot\overline Tu_0\right) \overleftarrow{Z_{j}}
+Z_{\bar i}\left(u_0\overleftarrow{Z_j}\right)+Z_{\bar i}\left( u_0\overleftarrow{  T}\cdot\rho_j\right)\\
&\qquad +u_1\left(\rho_{\bar i}\overleftarrow{Z_j}+Z_{\bar i}\rho_j\right)+\Phi_{\bar i}\rho_j+\rho_{\bar i}\Phi_{ j}\bigg]_{2\times 2}  \\=&\bigg[\left(Z_{\bar i}u_0\right)\overleftarrow{Z_j} +\left(\rho_{\bar i}\cdot\overline Tu_0\right) \overleftarrow{Z_{j}}
+Z_{\bar i}\left(u_0\overleftarrow{Z_j}\right)+Z_{\bar i}\left( u_0\overleftarrow{  T}\cdot\rho_j\right)\\
& +u_1\left(\rho_{\bar i}\overleftarrow{Z_j}+Z_{\bar i}\rho_j\right)+\Phi_{\bar i}\rho_j+\rho_{\bar i}\Phi_{ j}-\rho_{\bar i}\rho_j\Psi\bigg]_{2\times 2} 
,  \quad \left({\rm mod}\ \big[\rho_{\bar i}\rho_j\big]_{2\times 2}, \rho\right)
\end{aligned}\end{equation*} where   
$\Phi_j$ and $\Psi$ are  given in (\ref{Phij}). We add the term  $\rho_{\bar i}\rho_j\Psi$ in the last identity of (\ref{cau})   in order to make the $(1,1)$-th entry of $\big[2\partial_{\bar i}\partial_j\left(u_0+\rho u_1\right)\big]_{2\times2}$ vanishes after  ${\rm mod}\ \big[\rho_{\bar i}\rho_j\big]_{2\times 2},\   \rho$, since 
$$\Phi_{\bar 1}\rho_1+\rho_{\bar 1}\Phi_{ 1}-\rho_{\bar 1}\rho_1\Psi=\Phi_{\bar 1}\rho_1+\rho_{\bar 1}\Phi_{ 1}-\rho_{\bar 1}\rho_1\frac{2{\rm Re}\left(\rho_{\bar 1}\Phi_{1}\right)}{\left|\rho_1\right|^2}=0.$$
The proposition is proved. \end{proof} 

Noting that ${Z}_1=0=\overleftarrow{Z_1}$ by definition (\ref{Zj'}), 
   ${\rm Re} (\left(Z_{\bar 2}u_0 )\overleftarrow{Z_2}\right)= {\rm Re}\left(Z_{\bar 2}\left({Z_2}u_0\right)\right),$ and \begin{equation}\begin{aligned}
\Phi_{\bar 1}\rho_2+\rho_{\bar 1}\Phi_{2}-\rho_{\bar 1}\rho_2\Psi=&\Phi_{\bar 1}\rho_2+\rho_{\bar 1}\Phi_{2}-\frac{2{\rm Re}\left(\rho_{\bar 1}\Phi_{1}\right)}{\left|\rho_1\right|^2}\rho_{\bar 1}\rho_2=\rho_{\bar 1}\Phi_2-\frac{\left(\rho_{\bar 1}\Phi_1\rho_{\bar 1}\right)\rho_2}{\left|\rho_1\right|^2},\\\Phi_{\bar 2}\rho_2+\rho_{\bar 2}\Phi_{2}-\rho_{\bar 2}\rho_2\Psi=&2{\rm Re}\left(\Phi_{\bar 2}\rho_2\right) -\frac{2{\rm Re}\left(\rho_{\bar 1}\Phi_{1}\right)\left|\rho_2\right|^2}{\left|\rho_1\right|^2}  ,
\end{aligned}\end{equation} we have \begin{equation*}\begin{aligned}
\left(\begin{matrix} {\partial_{\bar 1}} {\partial_{1}}\left(u_0+\rho u_1\right) & 
{\partial_{\bar 1}} {\partial_{2}} \left(u_0+\rho u_1\right) \\ {\partial_{\bar 2}} {\partial_{1}}\left(u_0+\rho u_1\right) & {\partial_{\bar 2}} {\partial_{2}} \left(u_0+\rho u_1\right) \end{matrix}\right) =\frac12\left(F_{\bar ij}\right)  
\quad\left ({\rm mod}\ \mathcal J_1(U)\right),
\end{aligned}\end{equation*} where \begin{equation}\begin{aligned}\label{fij'}
F_{\bar 11}=&0,\\
F_{\bar 12}=&   \left(\rho_{\bar 1}\cdot\overline{T}u_0\right) \overleftarrow{Z_{2}}
  +u_1\cdot \rho_{\bar 1}\overleftarrow{Z_2} +\rho_{\bar 1}\Phi_{  2}-\frac{\left(\rho_{\bar 1}\Phi_1\rho_{\bar 1}\right)\rho_2}{\left|\rho_1\right|^2},\\F_{\bar 21}=&\overline{F_{\bar 12}} 
   ,\\ F_{\bar 22}=& 2 {\rm Re}\left[ 
   Z_{\bar 2}\left( {Z_2}u_0\right)+\left(\rho_{\bar 2}\left(\overline{T}u_0\right)\right)\overleftarrow{Z_{2}}
  +u_1  Z_{\bar 2}\rho_2+\Phi_{\bar 2}\rho_2-\frac{ \left|\rho_2\right|^2\rho_1\Phi_{\bar 1} }{\left|\rho_1\right|^2}\right],
\end{aligned}\end{equation} for  \begin{equation*}\begin{aligned}\Phi_{  1}=&  \overline T \left(u_0  \overleftarrow{T}\cdot \rho_{  1}  \right)    +u_1 \overleftarrow{T}\cdot\rho_1+u_1\cdot \overline T\rho_1 ,\\\Phi_{  2}=&  \overline T\left(u_0 \overleftarrow{Z_{  2}}\right) +\overline T \left(u_0  \overleftarrow{T}\cdot \rho_{2}  \right)  + 2 u_1 \overleftarrow{Z_{2}}  +u_1 \overleftarrow{T}\cdot\rho_2+u_1\cdot \overline T\rho_2 .\end{aligned}\end{equation*}    

 By the characterization of $\mathcal J_0(U)$ and $\mathcal J_1(U)$ in Proposition \ref{J13}, we have the isomorphism  \begin{equation*}\begin{aligned}
\pi_0:\Gamma\left(U,\mathbb R\right)/\mathcal J_0(U)&\xrightarrow{\cong} \Gamma\left(U\cap\partial\Omega,\mathbb R\oplus\mathbb R \right),\\\hat f=  u_0+ u_1+O\left(\rho^2\right)&\mapsto\left(  u_0, u_1\right),
\end{aligned}\end{equation*} 
and\begin{equation*}\begin{aligned}\pi_1:\Gamma\left(U,\mathcal H_2(\mathbb O)\right)/\mathcal J_1(U)&\xrightarrow{\cong} \Gamma\left(U\cap\partial\Omega,\mathcal H'_2(\mathbb O) %\oplus\mathcal H_2(\mathbb O)
 \right),\\\widehat F_{\bar ij}&\mapsto \frac12F_{\bar ij} 
 , \end{aligned}\end{equation*}   given by  
 (\ref{fij'}), with $\mathcal H'_2(\mathbb O)$ given by (\ref{h2o'}).
We call the operator  induced from octonionic Hessian  operators the \emph{tangential octonionic Hessian operators}: \begin{equation*}\begin{aligned} {\mathscr D}_0:\Gamma\left(U\cap\partial \Omega,\mathbb R^2\right) &\rightarrow \Gamma\left(U\cap\partial\Omega,\mathcal H'_2(\mathbb O)%\oplus\mathcal H_2(\mathbb O)
\right),\\\hat f=\left(u_0, u_1\right)&\mapsto\pi_1\left(\mathcal D_0\hat f\right)=%\left(F^1_{\bar ij},F^2_{\bar ij} \right),
\frac12F_{\bar ij},\end{aligned}\end{equation*} where $F_{\bar ij}$ are given by (\ref{fij'}).   
Then  the   system of equation $\mathscr D_0\left(\begin{matrix}u_0 \\ u_1\end{matrix}\right)=0$  reduces to the system of equations $F_{\bar12}=0,F_{\bar22}=0,$
 which is the octonionic Hessian version of the following system (\ref{na}) given by Andreotti--Nacinovich \cite{AN}.

\begin{rem}   The system of equation $\left(\partial\bar\partial\right)_b\left(u_0+\rho u_1\right)=0$ can be  reduces to the system of equations \begin{equation}\begin{aligned}\label{na}%\left\{
\mathscr L_{ij}u_0+l_{ij}u_1=&0,\quad &&1\leq i,j\leq n-1,\\
S_iu_0+T_iu_1=&0,\quad &&1\leq i\leq n-1, 
\end{aligned} \end{equation}where $\rho=y_n-\phi\left(z_1,\dots,z_{n-1},x_n\right),$ \begin{equation*}\begin{aligned}\mathscr L_{ij}:=&\frac{\partial^2}{\partial z_i\partial \bar z_j}+\frac12\bar\alpha_j\frac{\partial^2}{\partial z_i\partial x_n}+\frac{\partial^2}{\bar\partial z_i\partial x_n}+\frac14\alpha_i\bar\alpha_j\frac{\partial^2}{\partial x^2_n},\\ S_i:=&a\left[\frac{\partial^2}{\bar\partial z_i\partial x_n}+\frac14\alpha_i\bar\alpha_j\frac{\partial^2}{\partial x^2_n}\right],\\T_i:=&\frac{\partial}{\partial\bar z_i}+\frac12\bar \alpha_i \frac{\partial}{\partial x_n},\end{aligned}\end{equation*}  with  $a=-2\left(\mathbf i+\frac{\partial\phi}{\partial x_n}\right)^{-1},\alpha_j=a\frac{\partial\phi}{\partial z_j},l_{ij}=\frac{\partial^2\rho}{\partial z_i\partial{\bar z_j}}.$
 \end{rem}

   \begin{prop}\label{phat}
If $\hat f\in C^\infty\left(\partial \Omega,\mathbb R^2\right)$ is tangentially OPH, i.e. $  {\mathscr D}_0\hat f=0,$ there exists a representative $\tilde  f\in\Gamma\left(\overline \Omega,\mathbb R\right)$ such that $\left.\tilde f\right|_{\partial \Omega}=\hat f$ and $\mathcal D_0\tilde f$ is flat on $\partial \Omega.$
\end{prop}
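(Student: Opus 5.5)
The plan follows the standard Andreotti--Nacinovich construction of a formal solution: build $\tilde f$ order by order in $\rho$ using Borel's lemma, and at each order use the exactness of the symbol sequence from Theorem \ref{exact}. Let $\hat f=(u_0,u_1)$ with $\mathscr D_0\hat f=0$, extended to be independent of $x_1^0$. Start from $f_1:=u_0+\rho u_1$. By the preceding computation, $\mathcal D_0 f_1\equiv \frac12(F_{\bar ij}) \pmod{\mathcal J_1(U)}$, and $\mathscr D_0\hat f=0$ means $F_{\bar12}=F_{\bar22}=0$. So $\mathcal D_0 f_1\in\mathcal J_1(U)$, and by Proposition \ref{J13}(2),
\[
\mathcal D_0f_1=\sigma_0(\mathrm d\rho)H+\rho\, w_1
\]
with $H$ real and $w_1$ smooth. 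Here $\left(\rho_{\bar i}\rho_j\right)$ is, up to constants, the principal symbol of $\mathcal D_0$ at the conormal $\mathrm d\rho$. Adding $-\tfrac12\rho^2 H$ (modulo normalisation) removes the $\sigma_0(\mathrm d\rho)H$ term, because $\mathcal D_0(\rho^2 H)=2\sigma_0(\mathrm d\rho)H+O(\rho)$. This gives $f_2$ with $\mathcal D_0 f_2=O(\rho)$.

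Inductively, suppose $f_k=f_1+O(\rho^2)$ satisfies $\mathcal D_0f_k=\rho^{k-1}g_k$ with $k\ge2$. Since $\mathcal D_1\mathcal D_0=0$, we get
\[
0=\mathcal D_1(\rho^{k-1}g_k)=(k-1)\rho^{k-2}\sigma_1(\mathrm d\rho)g_k+O(\rho^{k-1}),
\]
so $\sigma_1(\mathrm d\rho)g_k=0$ on $\partial\Omega$. By the exactness of the symbol sequence at $\mathcal H_2(\mathbb O)$ (Proposition \ref{pell}, step (i), applied with real $\xi=\mathrm d\rho\neq0$; the real version follows from the complexified one), there is a real $h_k$ with $g_k=\sigma_0(\mathrm d\rho)h_k$ on $\partial\Omega$. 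Set
\[
f_{k+1}=f_k-\frac{\rho^{k+1}}{k(k+1)}h_k
\]
with the normalising constants taken from the symbol convention. Then $\mathcal D_0(\rho^{k+1}h)=k(k+1)\rho^{k-1}\sigma_0(\mathrm d\rho)h+O(\rho^k)$, so $\mathcal D_0 f_{k+1}=O(\rho^k)$, and the correction does not change the Cauchy data $(u_0,u_1)$. Borel's lemma then gives a smooth $\tilde f$ with $\tilde f-f_k=O(\rho^{k+1})$ for all $k$. Hence $\mathcal D_0\tilde f$ vanishes to infinite order on $\partial\Omega$, and its trace data agree with $\hat f$. The local constructions are patched with a partition of unity. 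This is legitimate because the correction at each order is forced modulo higher order: $\sigma_0(\mathrm d\rho)$ is injective, so $h_k$ is unique on $\partial\Omega$.

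The main obstacle is the first step: checking that $\mathscr D_0\hat f=0$ really puts $\mathcal D_0 f_1$ into $\mathcal J_1(U)$ in the precise form of Proposition \ref{J13}. This requires tracking the mod-$[\rho_{\bar i}\rho_j]$ and mod-$\rho$ reductions in the lemma preceding the statement, including the $\Psi$ correction. A second point to verify is that the symbols of $\mathcal D_0$ and $\mathcal D_1$ acting on $\rho^m g$ produce exactly $\sigma_j(\mathrm d\rho)$ up to the factor $\mathbf i$ and constants, with no extra terms from nonassociativity. For $\mathcal D_1$ this holds because it is first order and $\rho$ is scalar. For $\mathcal D_0$ the leading term is $\rho_{\bar i}\rho_j$ times a real function, which is associative by Lemma \ref{asso}(iv).
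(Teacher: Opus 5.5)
Your proposal is correct and follows essentially the same route as the paper: start from $u_0+\rho u_1$, use the description of $\mathcal J_1(U)$ to remove the $\frac12\rho^2H$ term, then iterate using $\mathcal D_1\circ\mathcal D_0=0$ and the algebra of (\ref{j1})--(\ref{uo}) (equivalently step (i) of Proposition \ref{pell} at the real covector $\xi=\mathrm d\rho$), and finish with a Borel/Whitney extension. Your tracking of the powers of $\rho$, with $\mathcal D_0 f_k=\rho^{k-1}g_k$ and corrections $\rho^{k+1}h_k/(k(k+1))$, is slightly more careful than the paper's, which writes $\rho\cdot G'$ after the $\frac16\rho^3H_1$ correction where $\rho^2$ times a smooth term is meant.
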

\begin{proof}
Since $\hat f$ is tangentially OPH on $\partial \Omega,$ $\mathscr D_0\hat f =0.$
 Let $\tilde f=  u_0 +\rho u_1$ be
a  smooth extension of $\hat f$ to a neighborhood of $\overline \Omega$, we see that %$\pi_1\left(\mathcal D_0\tilde f\right)=0$ by (\ref{f12}), i.e. 
 $\mathcal D_0\tilde f   
 \in\mathcal J_1(U).$  Thus  
\begin{equation*}\begin{aligned}
\mathcal D_0\tilde f=\left(\begin{matrix}\Delta_1\left(u_0+\rho u_1\right) &{\partial_{\bar1}} {\partial_2} \left(u_0+\rho u_1\right) \\{\partial_{\bar2}} {\partial_1}\left(u_0+\rho u_1\right) &\Delta_2\left(u_0+\rho u_1\right) \end{matrix}\right) =\left(\begin{matrix} \left|\rho_{  1}\right|^2  &\rho_{\bar 1} \rho_{2} \\\rho_{\bar 2}  \rho_{  1} & \left|\rho_{2}\right|^2   \end{matrix}\right)H+\rho\cdot G, 
\end{aligned}\end{equation*}
for   some $H\in\Gamma(U,\mathbb R)$ and $G\in\Gamma\left(U,\mathcal H_2(\mathbb O)\right), $   by the characterization of $\mathcal J_1(U)$ in (\ref{J3'}).
Then if set $\tilde u :=u_0+\rho u_1-\frac12\rho^2H,$ we get \begin{equation*}\begin{aligned}
\left(\begin{matrix}\Delta_1\tilde u  &{\partial_{\bar1}} {\partial_2}\tilde u \\{\partial_{\bar2}} {\partial_1}\tilde u &\Delta_2\tilde u  \end{matrix} \right) = \rho\cdot G.
\end{aligned}\end{equation*}
As $\mathcal D_1\circ\mathcal D_0\tilde u  =0$ automatically, we have 
\begin{equation*}\begin{aligned}
 \left.\left[G_{\bar 12}\cdot\rho_{\bar 2}-\rho_{\bar 1}\cdot G_{\bar 22}\right]\right|_{\partial \Omega}=&0,\\ \left.\left[G_{\bar 21}\cdot\rho_{\bar 1}-\rho_{\bar 2}\cdot G_{\bar 11}\right]\right|_{\partial \Omega}=&0. 
\end{aligned}\end{equation*}
Then  
\begin{equation*}\begin{aligned}
G=\left(\begin{matrix} \left|\rho_{ 1}\right|^2    &\rho_{\bar 1} \rho_{2} \\\rho_{\bar 2} \rho_{  1} & \left|\rho_{2} \right|^2  \end{matrix}\right)H_1+\rho\cdot G',
\end{aligned}\end{equation*}
by the same procedure as  (\ref{j1})-(\ref{uo}), for   some $H_1\in\Gamma(U,\mathbb R),G'\in\Gamma\left(U,\mathcal H_2(\mathbb O)\right).$ Thus if write $\tilde{\tilde u}=u_0+\rho u_1-\frac12\rho^2H-\frac16\rho^3H_1,$ we have 
\begin{equation*}\begin{aligned}
\left(\begin{matrix}\Delta_1\tilde{\tilde u}  &{\partial_{\bar1}} {\partial_2}\tilde{\tilde u} \\{\partial_{\bar2}} {\partial_1}\tilde{\tilde u} &\Delta_2\tilde{\tilde u}  \end{matrix}\right) = \rho\cdot G'.
\end{aligned}\end{equation*}
 Repeating this procedure, we get
\begin{align*}
\left(\begin{matrix}\Delta_1\left(u_0+\rho u_1+\dots\right) &{\partial_{\bar1}} {\partial_2}\left(u_0+\rho u_1+\dots\right) \\{\partial_{\bar2}} {\partial_1}\left(u_0+\rho u_1+\dots\right) &\Delta_2\left(u_0+\rho u_1+\dots\right)  \end{matrix}\right)=O_{\partial \Omega}^\infty.
\end{align*}with a formal power series in $\rho$ with coefficients $C^\infty$ on $\partial\Omega$, where $O_{\partial \Omega}^\infty$ denotes functions vanishing of infinite order on $\Omega$. By using Whitney extension theorem as in \cite{AH1,AH2}, we get the conclusion.  The proposition is proved. 
 \end{proof}

\begin{rem}
 Such extension for CR functions was constructed by Andreotti--Hill \cite{AH1,AH2},  while for pluriharmonic functions satisfying $\partial\bar\partial$-equation was constructed by Andreotti--Nacinovich in \cite{AN}.  This generalized to     $k$-CF functions   by the second named  author in \cite{wang29}.  Recently, such extension for tangentially monogenic functions was constructed by us in \cite{SW3}.
\end{rem} 
 \section{Non-homogeneous octonionic Hessian  equations and extension theorem}
\subsection{Solutions of non-homogeneous octonionic Hessian  equations}
The natural Hodge-Laplacian associated to the differential  complex (\ref{co}) should be
\begin{align*}
\widetilde\Box_j:=\mathcal D_{j-1}\mathcal D_{j-1}^*+\mathcal D_j^*\mathcal D_j.
\end{align*}
 But   $\mathcal D_0,\mathcal D_2$ and ${\mathcal D_4}$ are  differential operators of second order, while $\mathcal D_1,{\mathcal D}_3$ are of first order. Thus  $\widetilde\Box_j$   have degenerate principal symbols and  are not uniformly elliptic. So it is better to  consider the associated Hodge-Laplacians of fourth order as \begin{equation}\begin{aligned}\label{hodge}
\Box_0:=& \mathcal D_0^*\mathcal D_0 ,\\\Box_1:=&\mathcal D_0\mathcal D_0^*+\left(\mathcal D_1^*\mathcal D_1\right)^2,\\\Box_2:=&\left(\mathcal D_1\mathcal D_1^*\right)^2 +{\mathcal{D}_2^*}{\mathcal{D}_2},\\\Box_3:=&\mathcal D_2\mathcal D_2^* +\left({\mathcal{D}_3^*}{\mathcal{D}_3}\right)^2 ,\\\Box_4:=&\left(\mathcal D_3\mathcal D_3^*\right)^2 +{\mathcal{D}_4^*}{\mathcal{D}_4}.
\end{aligned}\end{equation}  It is easy to see that  $\Box_0=\Delta^2$ by Corollary \ref{p23}.
\begin{prop}\label{boxj}
Let $L_j(   {\xi}):=\sigma\left(\Box_j\right)(   {\xi})$ and $r_j={\rm dim} \mathcal V_j .$ Then
\begin{equation*}\begin{aligned}
L_0(   {\xi})=& \sigma_0(   {\xi})^*  \sigma_0(   {\xi}) ,\\L_1(   {\xi})=&\sigma_0(   {\xi})  \sigma_0(  {\xi})^*+\left(\sigma_1(  {\xi})^*  \sigma_1(   {\xi})\right)^2,\\L_2(  {\xi})=&\left(\sigma_1(   {\xi})  \sigma_1(  {\xi})^*\right)^2+\sigma_2(   {\xi})^*  \sigma_2(  {\xi}) ,\\L_3(  {\xi})=&\sigma_2(   {\xi})  \sigma_2(  {\xi})^*+\left(\sigma_3(   {\xi})^*  \sigma_3(  {\xi})\right)^2,\\L_4(   {\xi})=&\left(\sigma_3(  {\xi})  \sigma_3(   {\xi})^*\right)^2+\sigma_4(  {\xi})^*  \sigma_4(  {\xi}) ,
\end{aligned}\end{equation*}
are all automorphisms of $\mathbb C^{r_j}$,  respectively, for any fixed dual variable $   {\xi}\in\mathbb O^{2}\setminus\{\mathbf 0\},$   and are all homogeneous of degree $4$ in $   {\xi}.$
\end{prop}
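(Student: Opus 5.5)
The formulas for $L_j(\xi)$ come straight from the definition of the symbol. The proof of Lemma \ref{fourier} shows that the symbol of a composition is the composition of symbols. Since $\mathcal D_j^*$ is the formal adjoint with respect to the real inner products $\langle\cdot,\cdot\rangle$ on $\mathcal V_j$, its symbol is the adjoint $\sigma_j(\xi)^*$ with respect to the Hermitian extension of these inner products to the complexifications $\mathcal V_j\otimes\mathbb C\cong\mathbb C^{r_j}$. To keep signs and conjugations clean, I would first rescale to a real frequency: $\sigma_j(\xi)$ equals $\mathbf i^{m_j}$ times a real linear map $\tau_j(\xi)$, where $m_j\in\{1,2\}$ is the order of $\mathcal D_j$. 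Then $\sigma_j^*\sigma_j=\tau_j^{T}\tau_j$ and $\sigma_j\sigma_j^*=\tau_j\tau_j^{T}$, up to the positive factor $|\mathbf i^{m_j}|^2=1$. Homogeneity of degree $4$ follows because $\sigma_0,\sigma_2,\sigma_4$ are homogeneous of degree $2$ and $\sigma_1,\sigma_3$ of degree $1$, so each summand of $L_j$ is homogeneous of degree $4$.

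For invertibility, each $L_j(\xi)$ is a sum of two positive semidefinite self-adjoint operators of the form $A=\sigma_{j-1}\sigma_{j-1}^*$ or its square, and $B=\sigma_j^*\sigma_j$ or its square; for $j=0$ only the $B$ term appears. It therefore suffices to show $\ker L_j(\xi)=0$. If $L_j(\xi)w=0$, then $\langle L_jw,w\rangle=0$. Each term is nonnegative, and squaring does not change kernels of positive semidefinite operators ($\langle P^2w,w\rangle=|Pw|^2$). Hence $\sigma_j(\xi)w=0$ and $\sigma_{j-1}(\xi)^*w=0$. By Theorem \ref{exact}, i.e.\ Proposition \ref{pell}, $\ker\sigma_j(\xi)=\operatorname{Im}\sigma_{j-1}(\xi)$, so $w=\sigma_{j-1}(\xi)z$ for some $z$. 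Then
\[
|w|^2=\langle \sigma_{j-1}z,w\rangle=\langle z,\sigma_{j-1}^*w\rangle=0.
\]
For $j=0$, injectivity of $\sigma_0(\xi)$ gives the result directly. Injectivity of $L_4$ uses only exactness at $\mathcal V_4$; surjectivity of $\sigma_4$ is needed only for the analogous statement at $\mathcal V_5$, which is not claimed here. Finally, a self-adjoint injective operator on the finite-dimensional space $\mathbb C^{r_j}$ is an automorphism.

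The main obstacle is that Proposition \ref{pell} proves exactness over $\mathbb O_{\mathbb C}$ with the complex-bilinear conjugation, while the argument above needs a genuine Hermitian positivity. I would handle this by working with the real symbols $\tau_j(\xi)$, $\xi$ real, acting on the real spaces $\mathcal V_j$. The computations in Proposition \ref{pell} apply verbatim to real $\xi$ with real quantities $|\xi_i|^2>0$ in place of $|\boldsymbol\xi_i|^2$, or one can simply factor out the powers of $\mathbf i$. This gives exactness of the real sequence $\tau_j(\xi)$, and the kernel argument above then runs with the positive definite real inner products $\langle\cdot,\cdot\rangle$ of Section 2. Complexifying preserves invertibility.
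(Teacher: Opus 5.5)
Your argument is correct and is essentially the one the paper relies on. The paper only says the proof is similar to \cite{Wa10} and \cite{SW3}, where exactly this reasoning is used: the symbol of $\mathcal D_j^*$ is $\sigma_j(\xi)^*$ and each summand of $L_j(\xi)$ is Hermitian positive semidefinite, so $L_j(\xi)w=0$ forces $\sigma_j(\xi)w=0$ and $\sigma_{j-1}(\xi)^*w=0$, and exactness of the symbol sequence (Proposition \ref{pell}) then gives $w=0$.

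The ``obstacle'' in your last paragraph is not really one. Proposition \ref{pell} already gives exactness of complex-linear maps on $\mathcal V_j\otimes\mathbb C$, and the Hermitian-adjoint argument applies to them directly, so your reduction to the real symbols $\tau_j(\xi)$ is harmless but unnecessary.
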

The proof is similar to \cite{Wa10} for $k$-Cauchy--Fueter complexes and \cite{SW3} for several Dirac complex, respectively. 
Proposition \ref{boxj} implies that the associated Hodge-Laplacian operators in (\ref{hodge}) are all uniformly elliptic differential operators of fourth order with constant coefficients, i.e. there exists a constant $C > 0$ such that
\begin{align*}
C^{-1}|   {\xi}|^4I_{r_j}\leq L_j(   {\xi})\leq C |  {\xi}|^4I_{r_j}.
\end{align*}
\begin{prop}{\rm(\cite[Proposition 2.4.7]{Grafakos})}\label{PV} Let $K\in C^\infty\left(\mathbb R^{16}\setminus\{\mathbf 0\}\right)$   be a homogeneous function of degree $l-16,$ and  let $\mathbf K$ be the operator defined by $\mathbf K\phi = \phi *K.$
Then, for $\phi\in C_c^\infty\left(\mathbb R^{16}\right)$,  $A_1,\dots,A_l=1, 2,j_1,\dots, j_l=0,\dots, 7,$
\begin{align}\label{3.2}
\partial_{x_{A_1}^{j_1}}\dots\partial_{x_{A_l}^{j_l}}(\mathbf K\phi)=P.V.\left(\phi*\partial_{x_{A_1}^{j_1}} \dots\partial_{x_{A_l}^{j_l}} K\right) +a_{A_1j_1\dots A_lj_l}\phi,
\end{align}
for some constant  $a_{A_1j_1\dots A_lj_l}.$ Each term in {\rm (\ref{3.2})} is $C^\infty$ and the identity holds as $C^\infty$ functions.

Moreover, $\partial_{x_{A_1}^{j_1}}\dots\partial_{x_{A_l}^{j_l}}  K$  is a Calder\'on--Zygmund kernel on $\mathbb R^{16}.$ The singular integral operator $f\rightarrow P.V.$ $ \left(f *\partial_{x_{A_1}^{j_1}}\dots\partial_{x_{A_l}^{j_l}}K\right)$ is bounded
on $L^p$ for $1 < p <\infty.$ {\rm (\ref{3.2})} holds as $L^p$ functions.
\end{prop}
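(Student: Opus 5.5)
The statement is the classical result on homogeneous kernels and singular integrals, applied here on $\mathbb R^{16}=\mathbb O^2$. I would prove it by induction on the order $l$ of differentiation, with the case $l=1$ carrying all the content.

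\emph{Base case $l=1$.} Here $K$ is homogeneous of degree $1-16=-15$. It is therefore locally integrable, so $\mathbf K\phi=\phi*K$ is $C^\infty$ and
\[\partial_{x_A^j}(\phi*K)=(\partial_{x_A^j}\phi)*K.\]
I would write this as the limit as $\varepsilon\to0$ of the integral of $\partial_{x_A^j}\phi(x-y)K(y)$ over $|y|>\varepsilon$, and integrate by parts on that region. This produces two terms:
\[\int_{|y|>\varepsilon}\phi(x-y)\,\partial_{x_A^j}K(y)\,{\rm d}y\]
and a boundary integral over the sphere $|y|=\varepsilon$ of $\phi(x-y)K(y)$ times the $x_A^j$-component of the unit normal. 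The sphere has area of order $\varepsilon^{15}$ and $K=O(\varepsilon^{-15})$ on it, so replacing $\phi(x-y)$ by $\phi(x)$ costs only $O(\varepsilon)$. The boundary term therefore tends to $a\,\phi(x)$, where
\[a=\int_{\mathbb S^{15}}K(\omega)\,\omega_A^j\,{\rm d}\omega,\]
up to sign. Consequently the principal value exists and the formula (\ref{3.2}) holds for $l=1$.

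\emph{The derivative kernel is Calder\'on--Zygmund.} For the inductive step and for the $L^p$ statement, the key fact is that $\partial K$, the derivative of a kernel of degree $1-16$, is smooth off the origin, homogeneous of degree $-16$, and has mean zero on the unit sphere. I would prove the mean-zero property as follows. By homogeneity,
\[\int_{1<|y|<R}\partial K\,{\rm d}y=\log R\int_{\mathbb S^{15}}\partial K\,{\rm d}\omega.\]
By the divergence theorem, the left side equals the difference of the boundary integrals of $K\,\omega_A^j$ over the spheres of radius $R$ and $1$. By degree $-15$ homogeneity these two boundary integrals are equal, so the left side is zero, and hence the spherical mean of $\partial K$ vanishes. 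Smoothness, degree $-16$ homogeneity and zero spherical mean are exactly the hypotheses of the standard Calder\'on--Zygmund theorem. This gives boundedness of the principal-value operator on $L^p$ for $1<p<\infty$.

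\emph{Inductive step.} For general $l$, I would first apply $l-1$ derivatives directly to $\phi$:
\[\partial^{l}(\phi*K)=\partial_{x_{A_1}^{j_1}}\bigl((\partial_{x_{A_2}^{j_2}}\cdots\partial_{x_{A_l}^{j_l}}\phi)*K\bigr).\]
The $l=1$ case then gives a principal value against $\partial_{x_{A_1}^{j_1}}K$ plus a constant multiple of $\partial^{l-1}\phi$. This is not yet the claimed form. The cleaner route is to move all derivatives onto the kernel in one step: integrate by parts $l$ times outside the ball of radius $\varepsilon$. The intermediate kernels $\partial^mK$ have degree $l-16-m$, and the boundary terms on $|y|=\varepsilon$ involve $\partial^{m}\phi(x-y)\,\partial^{l-1-m}K(y)$. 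Taylor-expanding $\partial^m\phi$ at $x$, homogeneity shows that every boundary term either vanishes as $\varepsilon\to0$ or converges to a constant times $\phi(x)$; only the exact scaling order survives. This yields (\ref{3.2}) with a constant $a_{A_1j_1\dots A_lj_l}$.

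\emph{Main obstacle.} The delicate step is the bookkeeping of these boundary terms in the multi-derivative case. When $\partial^m\phi$ is Taylor-expanded, terms such as $\varepsilon^{k}$ multiplied by $\varepsilon^{15}\cdot\varepsilon^{-(16-l+1+m)+\cdots}$ appear, and I need either that the expansion terms carrying negative powers of $\varepsilon$ cancel, using odd/even symmetry together with the mean-zero property of the intermediate kernels, or that they are absorbed. To handle this I would first regularize: use $K\chi_{|y|>\varepsilon}$, apply the distributional identity $\partial^l K=\mathrm{P.V.}\,\partial^lK+c\,\delta$ (established by testing against $\phi$ and using the mean-zero property at each order), and then convolve with $\phi$. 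Smoothness of every term is automatic, since each is a convolution of a compactly supported smooth function with a tempered distribution. The $L^p$ identity then follows from density of $C_c^\infty$ in $L^p$ together with the $L^p$ boundedness established above.
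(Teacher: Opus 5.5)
Your argument is correct, and it is the standard proof of the result that the paper only cites from Grafakos: the one-derivative case is proved there exactly as in your base case, by integrating by parts on $\{|y|>\varepsilon\}$ and reading off $a$ from the sphere term, and the general case follows by moving all $l$ derivatives onto $K$ in the same way. The obstacle you anticipate does not arise: by homogeneity the boundary term $\int_{|y|=\varepsilon}\partial^{m}\phi(x-y)\,\partial^{l-1-m}K(y)\,\omega_A^j\,{\rm d}\sigma$ is $O(\varepsilon^{m})$, so only $m=0$ survives and no Taylor expansion is needed; you should also drop the appeal to a mean-zero property of the intermediate kernels $\partial^kK$ with $k<l$, which fails in general and plays no role.
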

Let $\widetilde G_j(   {\xi})=L_j(   {\xi})^{-1}$ for $  {\xi}\in\mathbb O^{2}\setminus\{\mathbf 0\},$ and  let $G_j$ be the inverse Fourier transform  of $\widetilde G_j.$    It is a smooth  homogeneous function of degree $-12$ in $\mathbb R^{16}\setminus\{\mathbf 0\}$ by   \cite[Theorem 7.1.16 and 7.1.18]{Hormander}.  Let $\mathbf G_j$ be the convolution operator with kernel $G_j.$  We have the   following regularity  result for operator  $\mathbf G_j$.
\begin{prop}\label{GG}
The Laplacian $\Box_j$ has the inverse $\mathbf G_j$ in $L^2\left(\mathbb R^{16},\mathcal V_j\right)$ for each $j,$ which is a convolution operator with kernel $G_j.$ The kernel $G_j$ is a smooth  homogeneous function  of degree $-12.$  Moreover, $\mathbf G_j$ can be extended to a bounded linear operator from $L^p\left(\mathbb R^{16},\mathcal V_j\right)$ to $W^{4,p}\left(\mathbb R^{16},\mathcal V_j\right),$ for $1 < p <\infty.$ For  a $\mathcal V_j$-valued function $f$   in $C_c^l\left(\mathbb R^{16},\mathcal V_j\right)$ and any non-negative integer $l,$ we have $\mathbf G_jf\in C^{l+3}\left(\mathbb R^{16},\mathcal V_j\right).$
\end{prop}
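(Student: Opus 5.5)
The plan is to work entirely on the Fourier side, using the uniform ellipticity of Proposition \ref{boxj}, and then to transfer the estimates to physical space with Proposition \ref{PV}. The steps, in order, are: construct $G_j$ and show $\mathbf G_j$ inverts $\Box_j$ on $L^2$; prove the top-order $L^p$ estimate; handle the lower-order part of the $W^{4,p}$ norm; and finally prove the $C^{l+3}$ regularity. The lower-order step is the one I expect to be the real obstacle.

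\emph{Kernel and $L^2$ inverse.} By Proposition \ref{boxj}, $L_j(\xi)$ is a real-analytic, matrix-valued, invertible symbol on $\mathbb O^2\setminus\{\mathbf 0\}$, homogeneous of degree $4$, with $C^{-1}|\xi|^4I_{r_j}\le L_j(\xi)\le C|\xi|^4I_{r_j}$. Hence $\widetilde G_j=L_j^{-1}$ is smooth away from $\mathbf 0$ and homogeneous of degree $-4$. Since $-4>-16$, it is locally integrable and defines a tempered distribution. By \cite[Theorem 7.1.16, 7.1.18]{Hormander}, its inverse Fourier transform $G_j$ is smooth on $\mathbb R^{16}\setminus\{\mathbf 0\}$ and homogeneous of degree $-16+4=-12$. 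No logarithmic term appears, because $-4$ is not of the form $-16-k$.

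For $f\in C_c^\infty$ we have $\widehat{\Box_j\mathbf G_jf}=L_j\widetilde G_j\hat f=\hat f$, and likewise $\mathbf G_j\Box_jf=f$. So $\mathbf G_j$ is the inverse of $\Box_j$. It is extended to $L^2$, in the sense of the closed operator $\Box_j$ with domain $W^{4,2}$, by density together with the symbol bound.

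\emph{Top-order $L^p$ estimate.} Apply Proposition \ref{PV} with $K$ equal to each entry of $G_j$ and $l=4$; this is allowed since the entries are homogeneous of degree $4-16$. Every fourth derivative $\partial_{x_{A_1}^{j_1}}\cdots\partial_{x_{A_4}^{j_4}}\mathbf G_j\phi$ then equals a Calder\'on--Zygmund principal-value operator applied to $\phi$, plus a constant times $\phi$. It is therefore bounded on $L^p$ for $1<p<\infty$, and the estimate extends from $C_c^\infty$ to $L^p$ by density.

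\emph{Lower-order part of the $W^{4,p}$ norm (main obstacle).} A derivative of order $k<4$ of $G_j$ is homogeneous of degree $k-12$, so $\partial^k\mathbf G_j$ is a Riesz-potential-type operator of order $4-k$. Globally such an operator maps $L^p$ to $L^{q}$ with $1/q=1/p-(4-k)/16$ (Hardy--Littlewood--Sobolev), not $L^p$ to $L^p$. My first attempt at an honest $L^p$ bound is to split $G_j=\chi G_j+(1-\chi)G_j$ with a cutoff $\chi$ near $\mathbf 0$:
\begin{itemize}
\item $\chi\,\partial^kG_j$ is integrable, since its degree is $k-12>-16$, so Young's inequality gives an $L^p\to L^p$ bound.
\item The tail $(1-\chi)\partial^kG_j$ lies in $L^{r}$ for $r>16/(12-k)$, so it maps $L^p$ boundedly into $L^p$ only after restricting to compactly supported data or measuring on bounded sets.
\end{itemize}
So the step to check is exactly how the $W^{4,p}$ bound is intended. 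I would prove it in the form needed for Theorem \ref{t31}: full $W^{4,p}$ control for data supported in a fixed ball, measured on compacta, and top-order control globally. I would flag that the unrestricted global version fails at the lower orders because of the tail of $G_j$.

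\emph{$C^{l+3}$ regularity.} For $f\in C_c^l$, derivatives of total order $\le l$ commute with the convolution, so $\partial^\alpha\mathbf G_jf=\mathbf G_j\partial^\alpha f$ with $\partial^\alpha f\in C_c$. Derivatives of $G_j$ of order $\le 3$ are homogeneous of degree $\ge-15>-16$, hence locally integrable. Therefore up to three further derivatives fall on the kernel and give continuous functions by dominated convergence, using that $f$ is compactly supported and bounded. This yields $\mathbf G_jf\in C^{l+3}$.
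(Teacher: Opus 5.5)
Your route is the paper's route: invert the symbol via Proposition \ref{boxj} to get $G_j$ smooth and homogeneous of degree $-12$, apply Proposition \ref{PV} with $l=4$ to the fourth derivatives, and differentiate the convolution integral for the $C^{l+3}$ claim. Your caveat about the lower-order part of the $W^{4,p}$ norm is correct, and the paper does not get around it. Its proof simply asserts that Proposition \ref{PV} makes convolution with a kernel homogeneous of degree $l-16$ bounded from $L^p$ to $W^{l,p}$. It then deduces by duality that $\mathbf G_j$ is bounded from $W^{-4,2}$ to $L^2$. But Proposition \ref{PV} controls only the derivatives of order exactly $l$.

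Your cutoff splitting only shows that one method of proof fails. A scaling argument shows that the inhomogeneous claim is actually false. Since $\Box_j$ has constant coefficients and is homogeneous of order $4$, take $\phi\in C_c^\infty(\mathbb R^{16},\mathcal V_j)\setminus\{0\}$ and set $\phi_\lambda(\mathbf x):=\phi(\lambda\mathbf x)$. Then $\Box_j\phi_\lambda=\lambda^4(\Box_j\phi)_\lambda$ and $\mathbf G_j\Box_j\phi_\lambda=\phi_\lambda$, so
\[
\frac{\|\mathbf G_j\Box_j\phi_\lambda\|_{L^p}}{\|\Box_j\phi_\lambda\|_{L^p}}=\lambda^{-4}\,\frac{\|\phi\|_{L^p}}{\|\Box_j\phi\|_{L^p}}\longrightarrow\infty\qquad(\lambda\to0)
\]
for every $1<p<\infty$. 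Here $\Box_j\phi\neq0$ because $L_j(\xi)$ is invertible for $\xi\neq\mathbf 0$. Hence no bound $\|\mathbf G_jg\|_{L^p}\le C\|g\|_{L^p}$ holds, even for $g\in\Box_jC_c^\infty$.

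Taking $p=2$, and using $\|g\|_{W^{-4,2}}\le\|g\|_{L^2}$, this also rules out boundedness of $\mathbf G_j$ on $L^2$ and from $W^{-4,2}$ to $L^2$. So your phrase ``extended to $L^2$ by density together with the symbol bound'' cannot mean a bounded extension. The symbol bound gives only $|\widetilde G_j(\xi)|\le C|\xi|^{-4}$, which is unbounded at $\mathbf 0$. The correct content of ``inverse in $L^2$'' is this: $\Box_j$ with domain $W^{4,2}$ is injective with dense, proper range $\{g\in L^2:\widetilde G_j\hat g\in L^2\}$, and $\mathbf G_j$ is its unbounded inverse there.

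With that reading, your version is the correct form of the proposition. It consists of global $L^p$ bounds for the fourth derivatives of $\mathbf G_jg$, full $W^{4,p}$ bounds on compacta for data supported in a fixed ball, and $C^{l+3}$ regularity for $C_c^l$ data. This is also the form that can legitimately be used for compactly supported data in Theorem \ref{t31}.
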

\begin{proof}
For   $f\in C_c^\infty\left(\mathbb O^{2},\mathcal V_j\right),$ let \begin{align}\label{gG}
\mathbf G_jf=\int_{\mathbb O^{2}}G_j(\mathbf x-\mathbf y)f(\mathbf y)\hbox{d}V(\mathbf y).
\end{align}   Then for $f\in C_c^\infty\left(\mathbb O^{2},\mathcal V_j\right),$ \begin{align}\label{Gj}
\mathbf G_j\Box_jf=\Box_j\mathbf G_jf=f,
\end{align}holds in $L^2,$ since   their Fourier transforms are the same as $L^2$ functions. Since $L_j(\xi)$ are symmetric matrices by Proposition \ref{boxj}, we see that $G_j(  {\xi})$ are also symmetric. It follows from Plancherel theorem that   $\mathbf G_j$ is formally self-adjoint in the following sense:
\begin{align*}
\left(\mathbf G_j\phi,\psi\right)_{L^2\left(\mathbb O^{2},\mathcal V_j\right)}=\left(\phi,\mathbf G_j\psi\right)_{L^2\left(\mathbb O^{2},\mathcal V_j\right)},
\end{align*}
for any $\phi,\psi\in C_c^\infty\left(\mathbb O^{2},\mathcal V_j\right).$

By   Proposition \ref{PV}, the convolution with a homogeneous function of degree $-16+l$ can be extended to a bounded linear operator from $L^p\left(\mathbb O^{2}\right)$ to $W^{l,p}\left(\mathbb O^{2}\right).$  $\mathbf G_j$ is bounded from $L^2\left(\mathbb O^{2},\mathcal V_j\right)$ to $W^{4,2}\left(\mathbb O^{2},\mathcal V_j\right)$. So $\mathbf G_j$ can be extended to a bounded operator from $W^{-4,2}\left(\mathbb O^{2},\mathcal V_j\right)$ to $L^2\left(\mathbb O^{2},\mathcal V_j\right)$ by the duality argument. In particular, it is bounded from $W^{-2,2}\left(\mathbb O^{2},\mathcal V_j\right)$ to $L^2\left(\mathbb O^{2},\mathcal V_j\right).$  Thus (\ref{Gj}) holds as bounded linear operator from $L^2$ to $L^2,$ i.e.  $\mathbf G_j$ is the inverse operator of  Laplacian $\Box_j$ in $L^2.$

When $f\in C_c^{l}\left(\mathbb O^{2},\mathcal V_j\right),$    $\mathbf G_jf\in C^{l+3}\left(\mathbb O^{2},\mathcal V_j\right)$  by differentiation (\ref{gG}). The proposition is proved.
\end{proof}
 
 \vskip 3mm
{\it Proof of Theorem \ref{t31}.} We prove the case $j=0,$ the other cases are similar.
  Recall that  $\mathbf G_1$ is the inverse operator of $\Box_1$ on $L^2\left(\mathbb O^{2},\mathcal H_2(\mathbb O)\right)$ by Proposition \ref{GG}. We assume $u\in C_c^\infty\left(\mathbb O^{2},\mathcal H_2(\mathbb O)\right)$ first. Then
\begin{align*}
f:= \mathcal D_0^*\mathbf G_1u\in C^\infty\left(\mathbb O^{2},\mathbb R\right),
\end{align*}  by Proposition \ref{GG}. Note that
\begin{equation}\begin{aligned}\label{d1g1}
\mathcal D_1\Box_1u=&\mathcal D_1\left(\mathcal D_0\mathcal D_0^*+\left(\mathcal D_1^*\mathcal D_1\right)^2\right)u=\mathcal D_1\left(\mathcal D_1^*\mathcal D_1\right)^2u\\=&\left(\left(\mathcal D_1\mathcal D_1^*\right)^2+\mathcal D_2^*{\mathcal D_2}\right)\mathcal D_1u=\Box_2\mathcal D_1u,
\end{aligned}\end{equation}
   by $\mathcal D_1\mathcal D_0=0,{\mathcal D_2}\mathcal D_1=0.$
Now  for $u\in C_c^\infty \left(\mathbb O^{2},\mathcal H_2(\mathbb O)\right),$ we have
\begin{equation}\begin{aligned}\label{b2}
\Box_2 \left(\mathbf G_2\mathcal D_1u-\mathcal D_1\mathbf G_1u\right)=\mathcal D_1u-\mathcal D_1\Box_1\mathbf G_1u=0,
\end{aligned}\end{equation}
by using (\ref{d1g1}). For $u\in C_c^\infty \left(\mathbb O^{2},\mathcal H_2(\mathbb O)\right),$ it is direct to see that    $\left(\mathbf G_2\mathcal D_1u-\mathcal D_1\mathbf G_1u\right)\widehat{} \in L^2$  and $L_2( {\xi})\left(\mathbf G_2\mathcal D_1u-\mathcal D_1\mathbf G_1u\right)\widehat{} (  {\xi})=0$ for $  {\xi}\in\mathbb O^{2}$ by (\ref{b2}). Thus $\left(\mathbf G_2\mathcal D_1u-\mathcal D_1\mathbf G_1u\right)\widehat{} =0$ in $L^2.$ Consequently,    $ \mathbf G_2\mathcal D_1u=\mathcal D_1\mathbf G_1u $ in $L^2$ by the Plancherel theorem.  Moreover, $ \mathbf G_2\mathcal D_1u=\mathcal D_1\mathbf G_1u$ pointwise, since they are both smooth  by Proposition \ref{GG}.  Hence \begin{align}\label{GD}\mathbf G_2\mathcal D_1=\mathcal D_1\mathbf G_1,\end{align}as operators      from $C_c^\infty \left(\mathbb O^{2},\mathcal H_2(\mathbb O)\right)$ to $W^{2,2}\left(\mathbb O^{2}, \mathbb O^2\right)$ by Proposition \ref{GG}.
 Thus \begin{align}\label{DGf}\mathcal D_1\mathbf G_1u=\mathbf G_2\mathcal D_1u=0,\end{align}for $u\in C_c^\infty\left(\mathbb R^{16},\mathcal H_2(\mathbb O)\right),$ and so \begin{align}\label{DDDDG}
\mathcal D_0f=\mathcal D_0\mathcal D_0^*\mathbf G_1u=\left(\mathcal D_0\mathcal D_0^*+\left(\mathcal D_1^*\mathcal D_1\right)^2\right)\mathbf G_1u=u,
\end{align}
i.e. $f= \mathcal D_0^*\mathbf G_1u$ satisfies $\mathcal D_0f=u.$ 

On the other hand, the identity (\ref{GD}) holds as   bounded linear operators from the space  $L^2\left(\mathbb O^{2},\mathcal H_2(\mathbb O)\right)$ to $L^2\left(\mathbb O^{2}, \mathbb O^2\right)$, and so the identity (\ref{DGf}) holds for $u\in L^2\left(\mathbb O^{2},\mathcal H_2(\mathbb O)\right).$ Thus,  (\ref{DDDDG}) holds as $L^2$ functions for  $u\in L^2\left(\mathbb O^{2},\mathcal H_2(\mathbb O)\right)$ satisfying $\mathcal D_1u= 0$ in the sense of distributions. Thus    $f = \mathcal D_0^*\mathbf G_1u$ satisfies the  equation $\mathcal D_0f=u$.

 Suppose that $u$ is supported in a bounded domain $\Omega\Subset\mathbb R^{16}.$   By $\mathcal D_0f= u$, we  see that $f\in{\rm OPH}\left(\mathbb O^{2}\setminus \Omega\right).$   Note that   $G_j$ satisfies the following decay estimates
\begin{align}\label{partial}
\left|\partial_{x_{i_1}^{j_1}} \dots\partial_{x_{i_m}^{j_m}} G_j(\mathbf x)\right|\leq \frac{C_{i_1j_1\dots i_mj_m}}{|\mathbf x|^{12+m}},
\end{align}
for some constant $C_{i_1j_1\dots i_mj_m}>0$ depending on  $i_1,\dots,i_m=1,2,$ $j_1,\dots,$ $j_m=0,\dots,7.$  Thus  the integral kernel $K(\mathbf x)$ of $\mathcal D_0^*\mathbf G_1$ decays as $|\mathbf x|^{-14}$ for large $|\mathbf x|.$  Since  $f$ is compactly supported, we see that
\begin{align}\label{k}
\left|f(\mathbf x)\right|=\left|\int_{\mathbb O^{2}}K\left(\mathbf x-{\mathbf y}\right)u\left({\mathbf y}\right)\hbox{d}V\left({\mathbf y}\right)\right|\leq\frac{C}{\left(1+|\mathbf x|\right)^{14}},
\end{align}
for some constant $C>0.$
So  $\lim_{|\mathbf x|\rightarrow\infty}f(\mathbf x)=0.$ Since
$\Omega$ is bounded, its projection to $\mathbb R_{\mathbf x_1}^{ 8} $ is also bounded and so is contained in a ball centered at origin with   radius, say $M$. Therefore, 
for $\mathbf x=\left(\mathbf x_1,\mathbf x_2\right)$ with  $|\mathbf x_1|>M$, we have 
 $$\left(\left\{\mathbf x_1\right\}\times\mathbb R_{\mathbf x_2}^{8}\right)\cap\overline \Omega=\emptyset.$$
  Thus, $f\left(\mathbf x_1,\mathbf x_2\right)$ for $\left|\mathbf x_1\right|>M$ is  octonionic pluriharmonic.    In particular, it is harmonic  in variable $\mathbf x_2\in \mathbb{R}^{8}$ and   vanishes at infinity.  i.e. 
each component of $f\left(\mathbf x_1, \cdot\right)$ is a harmonic function on $\mathbb R_{\mathbf x_2}^8$.    Thus,  it is bounded,  and so $f\left(\mathbf x_1, \cdot\right)\equiv0$ for $|\mathbf x_1|>M$   by Liouville  theorem. Consequently, $f\equiv0$ on the unbounded connected component of $\mathbb O^{2}\setminus \Omega$ by the identity theorem for real analytic functions, since  $f$ is real analytic on $\mathbb O^{2}\setminus \Omega$ by Corollary \ref{p34}. The continuity of $f$ follows from the integral representation  formula as in (\ref{k}). This completes the proof of the theorem.
\qed

Now we have the Hartogs' extension phenomenon for OPH functions.
\begin{cor}\label{hart}
Suppose that   $\Omega$ is a bounded open set  in $\mathbb O^{2},$ and let $K$ be a compact subset of $\Omega$ such that $\Omega\setminus K$ is connected. Then for each  $f\in OPH(\Omega\setminus K),$   we can find  $F\in OPH(\Omega)$ such that $F=f$ in $\Omega\setminus K.$
\end{cor}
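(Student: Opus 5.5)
We follow the classical cut-off argument for Hartogs' phenomenon, using Theorem \ref{t31} with $j=0$ in place of the $\bar\partial$-equation with compact support. Since $\Omega\setminus K$ is connected and $K$ is compact, we choose a cut-off function $\chi\in C_c^\infty(\Omega,\mathbb R)$ with $\chi\equiv1$ on a neighbourhood of $K$. We set
\[
F_0:=(1-\chi)f,
\]
extended by $0$ on $K$. Then $F_0\in C^\infty(\Omega,\mathbb R)$, and $F_0=f$ outside ${\rm supp}\,\chi$. Next we put $u:=\mathcal D_0F_0$. Because $f$ is OPH on $\Omega\setminus K$, $u$ vanishes wherever $\chi\equiv0$ or $\chi\equiv 1$. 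Hence $u$ is supported in the compact set ${\rm supp}\,\chi$ and, after extension by zero, $u\in C_c^\infty(\mathbb O^2,\mathcal H_2(\mathbb O))$. By Proposition \ref{complex}, $\mathcal D_1u=\mathcal D_1\mathcal D_0F_0=0$ on $\Omega$, and trivially outside. So the compatibility condition of Theorem \ref{t31} holds.

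Theorem \ref{t31} then gives $g\in C_c(\mathbb O^2,\mathbb R)\cap W^{1,2}$ with $\mathcal D_0g=u$ in the sense of distributions, and $g$ vanishes on the unbounded connected component of $\mathbb O^2\setminus{\rm supp}\,u$. We define $F:=F_0-g$ on $\Omega$. Then $\mathcal D_0F=0$ in the sense of distributions. By Corollary \ref{p23}, $\Delta^2F=\mathcal D_0^*\mathcal D_0F=0$ distributionally, so Weyl's lemma for the elliptic operator $\Delta^2$ shows that $F$ is smooth, indeed real analytic. Hence $F\in OPH(\Omega)$.

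It remains to show $F=f$ on $\Omega\setminus K$, and this is the step needing care. The unbounded component $W$ of $\mathbb O^2\setminus{\rm supp}\,\chi$ contains $\mathbb O^2\setminus\Omega$. We want to know that $W$ meets $\Omega$ in a nonempty open set. This holds if $\Omega$ is bounded: take points of $\Omega$ near $\partial\Omega$, which lie outside the compact set ${\rm supp}\,\chi\subset\Omega$ and are connected to infinity through $\mathbb O^2\setminus{\rm supp}\,\chi$. Technically, one may first shrink to a component of $\Omega$; the statement presupposes $\Omega\setminus K$ connected. On the nonempty open set $W\cap\Omega$ we have $g=0$ and $\chi=0$, so $F=f$ there. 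Both $F$ and $f$ are OPH, hence real analytic by Corollary \ref{p34}, on the connected open set $\Omega\setminus K$. The identity theorem for real analytic functions then gives $F=f$ on all of $\Omega\setminus K$.

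The main obstacle is arranging that $W\cap\Omega\neq\emptyset$ and lies in the connected set $\Omega\setminus K$. If needed, we choose ${\rm supp}\,\chi$ inside a slightly larger compact set $K'\subset\Omega$ with $\Omega\setminus K'$ still meeting the unbounded component. This is a standard topological adjustment, as in the complex case.
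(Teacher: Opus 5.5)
Your proof is correct and is the standard cut-off argument the paper intends: the paper gives no separate proof, obtaining the corollary directly from Theorem \ref{t31} (including the vanishing of the solution on the unbounded component of $\mathbb O^2\setminus{\rm supp}\,u$), with the same structure as its proof of Theorem \ref{HBE}. One small correction: $W$ need not contain $\mathbb O^2\setminus\Omega$ when $\Omega$ has holes, since ${\rm supp}\,\chi$ may enclose them. The fact you actually need, $W\cap\Omega\neq\emptyset$, still holds: take points of $\Omega$ close to a point $p\in\overline\Omega$ of maximal norm, which are joined to infinity, avoiding ${\rm supp}\,\chi$, by a short segment inside a ball around $p$ disjoint from ${\rm supp}\,\chi$ followed by the ray $\{sp: s>1\}$.
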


\subsection{Hartogs--Bochner extension theorem  for tangentially OPH  functions}
Now we can prove the  Hartogs--Bochner extension theorem  for tangentially OPH  function. 
\vskip 3mm
\noindent {\it Proof of Theorem \ref{HBE}.} 
By Proposition \ref{phat}, we can extend  $f$ to a smooth function $\tilde f$ on $\overline \Omega$ such that $\left.\tilde f\right|_{\partial \Omega}=f$ and $\mathcal D_0\tilde f$ is flat on $\partial \Omega.$ Then we can   extend $\mathcal D_0\tilde f$ by $0$ outside of $\overline \Omega$ to get a $\mathcal D_1$-closed element $F\in C_c^2\left(\mathbb O^{2},\mathcal H_2(\mathbb O)\right)$ supported in $\overline \Omega.$ Now  by Theorem \ref{t31}, there exists an  $H\in W^{2,2}\left(\mathbb O^{2},\mathbb R\right)$ vanishing  on the connected open set $\mathbb O^{2}\setminus\overline \Omega$   such that $F=\mathcal D_0H.$ Then $ \tilde f-H\in OPH(\Omega)$   and  gives us the required extension.
\qed
\appendix\section{ Proof of the  $\mathbb O_{\mathbb C}$-valued version of Lemma \ref{asso} }\label{app}
It is obvious  that  $  {\rm Re} (x) = {\rm Re} \left(\bar x\right)$ and  $  {\rm Re} (xy) = {\rm Re} (yx)$ for $x,y\in\mathbb O_{\mathbb C}$ by definition. Here we only prove the $\mathbb O_{\mathbb C}$-valued version of Lemma \ref{asso} (iii)-(iv).
\vskip 2mm  
\noindent{\it Proof of   Lemma 2.1 {\rm(iv)}.} Since $(xy)y=x(yy)$ and $(xy)x=x(yx)$ for $x,y\in\mathbb O_{\mathbb C}$ (\cite[Proposition 1.4.2]{SpringerV}), we have  
\begin{equation*}\begin{aligned}
(\bar xy)y=&((2{\rm Re}(x)-x)y)y=2{\rm Re}(x)yy-(xy)y=2{\rm Re}(x)yy-x(yy)=\bar x(yy),\\( x\bar y)y=&(x(2{\rm Re}(y)-y))y=2(x{\rm Re}(y))y-(xy)y=x(2{\rm Re}(y)y)-x(yy)=x(\bar yy),\\(\bar xy)x=&((2{\rm Re}(x)-x)y)x=2{\rm Re}(x)yx-(x y)x=2{\rm Re}(x)yx-x(yx)=\bar x(yx),\\( x\bar y)x=&(x(2{\rm Re}(y)-y))x=2(x{\rm Re}(y))x-(xy)x=x(2{\rm Re}(y)x)-x(yx)=x(\bar yx).
\end{aligned}\end{equation*} Similarly, we can prove the other cases. \qed

\begin{lem}\label{alem}For $x,y,z\in\mathbb O_{\mathbb C},$ we have \\
{\rm (i)} The associator $\{x,y,z\}$ changes sign if   conjugate any variable; \\{\rm (ii)} The associator $\{x,y,z\}$ is an alternating function of the three variables.
\end{lem}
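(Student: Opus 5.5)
The plan is to prove both parts by writing $\mathbb O_{\mathbb C}=\mathbb O\otimes_{\mathbb R}\mathbb C$ and using that the associator $\{x,y,z\}=(xy)z-x(yz)$ is $\mathbb C$-trilinear, since $\mathbf i$ commutes and associates with every $e_a$. By trilinearity it suffices to verify any multilinear identity on a spanning set, and the basis $\{e_a\}$ spans $\mathbb O_{\mathbb C}$ over $\mathbb C$. The caveat is that alternativity $\{x,x,y\}=0$ is quadratic in $x$, so it does not follow from checking basis vectors alone. I will therefore first establish the alternating property (ii) and then derive (i) from it.

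For (ii) I will start from the facts already invoked in the proof of Lemma 2.1 (iv), namely $(xy)y=x(yy)$ and $(xy)x=x(yx)$ for $x,y\in\mathbb O_{\mathbb C}$ (cf. \cite[Proposition 1.4.2]{SpringerV}). The left version $(xx)y=x(xy)$ is obtained in the same way. These give $\{x,y,y\}=0$, $\{x,y,x\}=0$ and $\{x,x,y\}=0$. Polarizing, i.e. replacing $y$ by $y+z$ in $\{x,y,y\}=0$ and using trilinearity, gives $\{x,y,z\}=-\{x,z,y\}$. Polarizing $\{x,x,y\}=0$ in the same way gives $\{x,y,z\}=-\{y,x,z\}$. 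Since transpositions of adjacent slots generate $S_3$, the associator is alternating.

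For (i) I will use $\bar x=2{\rm Re}(x)-x$, where ${\rm Re}(x)\in\mathbb C$ is a scalar multiple of $e_0=1$. The associator vanishes whenever one argument is a scalar, because scalars are central and associate with everything. By trilinearity this gives $\{\bar x,y,z\}=2{\rm Re}(x)\{1,y,z\}-\{x,y,z\}=-\{x,y,z\}$. Part (ii) then transports this sign change to any other slot, or one can simply repeat the same computation in that slot.

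The only step I expect to need care is confirming that the alternative laws hold over $\mathbb O_{\mathbb C}$ and not merely over $\mathbb O$. If the citation is not accepted, I would argue directly. The polynomial $x\mapsto\{x,x,y\}$ is real-polynomial in the real coordinates of $x$ and $y$, and it vanishes on real arguments. Its complexification to $\mathbb O_{\mathbb C}$ is the unique $\mathbb C$-polynomial extension, so it vanishes identically there as well. An equivalent route is to note that the complexified trilinear map $\{x,y,z\}$ is alternating because the real one is, and $\mathbb C$-linear extension preserves the alternating property.
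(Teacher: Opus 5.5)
Your proposal is correct and follows essentially the same route as the paper: part (ii) by polarizing the alternative laws $\{x,x,y\}=0=\{x,y,y\}$ using trilinearity, and part (i) by reducing the conjugated argument to its imaginary part. Your identity $\bar x=2{\rm Re}(x)-x$, combined with the vanishing of the associator on scalar arguments, is exactly the paper's ``assume the variable to be conjugated is pure imaginary'' step. One cosmetic difference is that your (i) does not actually depend on (ii), since the same computation works directly in each slot; this is how the paper argues.
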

\begin{proof}
(i) Since  $\{x,y,z\}=0$ if one of $x,y,z$ is complex,  
we can assume the variable to be conjugated is pure imaginary. Thus for example
$\{\bar x,y,z\} =\{-x,y,z\}=-\{x,y,z\}.$\\
(ii) By $\mathbb O_{\mathbb C}$-valued version of Lemma 2.1 (iv) we have already proved above, we have  $\{ x, x, y\} =0=\{x, y,   y\},$     polarisation gives us $\{ z+ x,z+ x, y\}=0=\{  x, y+z, y+z\},$ which lead
to $\{z,x,y\} + \{x,z,y\} = 0$, $\{x,y,z\} + \{x,z,y\} = 0.$ 
\end{proof}
\noindent {\it Proof  of Lemma 2.1 {\rm(iii)}.} Note that $\overline{xy}=\bar y\cdot\bar x$ for any $x,y\in \mathbb O_{\mathbb C}$ (\cite[Lemma 1.3.1]{SpringerV}),  we have  \begin{equation}\begin{aligned} \overline{\{x,y,z\}}=\overline{(xy)z-x(yz)}=\bar z\left(\bar y\bar x\right)-\left(\bar z\bar y\right)\bar x=-\left\{\bar z,\bar y,\bar x\right\}= {\{z,y,x\}}=-{\{x,y,z\}},\end{aligned}\end{equation}  by using  Lemma \ref{alem} (i)  (ii). Thus, ${\rm Re}((xy)z)={\rm Re}(x(yz)).$\qed

% ------------------------------------------------------------------------

\begin{thebibliography}{99}

 

\bibitem{Alesker1}
Alesker, S., Non-commutative linear algebra and plurisubharmonic functions of quaternionic variables, \emph{Bull. Sci. Math.} \textbf{127(1)}  (2003), 1-35. 
\bibitem{Alesker2}
Alesker, S., Quaternionic Monge--Amp\`ere equations, \emph{J. Geom. Anal. } \textbf{13(2)} (2003), 205-238.
\bibitem{Alesker3}
Alesker, S., Plurisubharmonic functions on the octonionic plane and Spin$(9)$-invariant valuations on convex sets, \emph{J. Geom. Anal.} \textbf{18(3)} (2008), 651-686.

\bibitem{AG}
Alesker, S. and Gordon, P.V., Octonionic Calabi--Yau theorem, {\it J. Geom. Anal.} \textbf{34 } (2024), Paper No. 293.



\bibitem{Andreotti}  
Andreotti, A., Hill, C., Lojasiewicz, S. and Mackichan, B.,  Complexes of differential operators, {\it Invent. Math.} \textbf{35 } (1976), 43-86.



\bibitem{AH1}
Andreotti, A. and Hill, C.,   Levi convexity and the Hans Lewy problem, \emph{Ann. Scuola Norm. Super. Pisa} \textbf{26} part I (1972),  325-363.

\bibitem{AH2}
Andreotti, A. and  Hill, C.,  Levi convexity and the Hans Lewy problem, \emph{Ann. Scuola Norm. Super. Pisa} \textbf{26} part II (1972), 747-806.


\bibitem{AN}
Andreotti, A. and Nacinovich, M.,  Noncharacteristic hypersurfaces for complexes of differential operators,
\emph{Ann. Mat. Pura Appl.} \textbf{125} (1980), 13-83.

\bibitem{AT}
Angella, D. and  Tomassini, A.,  On the $\partial\bar \partial$-lemma and Bott--Chern cohomology, \emph{Invent. Math.} 192 (2013), 71-81. 

\bibitem{Baez}
\newblock {Baez, J., The octonions,}
\newblock{ \emph{Bull. Amer. Math. Soc.}} \textbf{39(2)} (2002), 145-205.

\bibitem{Bedford}
Bedford, E.,
The Dirichlet problem for some overdetermined systems on the unit ball in $\mathbb C^n$,
\emph{Pacific J. Math.} \textbf{51} (1974), 19-25.
 \bibitem{BF}Bedford, E. and    Federbush, P., Pluriharmonic boundary values, {\it Tohoku Math. J.} \textbf{26} (1974), 505-511.
%\bibitem{B} Bedford, E., $\left(\partial\bar\partial\right)_b$ and the real parts of CR functions, {\it Indiana Univ. Math. J.} \textbf{29} (1980), 333-340.



 \bibitem{CSSS}Colombo, F., Sabadini, I., Sommen, F. and  Struppa, D.,  \emph{Analysis of Dirac systems and computational algebra,} Progress in Mathematical Physics,  \textbf{39}. Birkh\"auser  Boston, MA, 2004.




 \bibitem{Delanghe} Delanghe, R., Sommen, F. and  Sou\v{c}ek, {\it V., Clifford algebra and spinor-valued functions}, Mathematics and its Applications \textbf{53}, \newblock Kluwer Academic Publishers Group, Dordrecht, 1992.

 

\bibitem{CKS}
Colombo, F., Krau\ss har, R. and  Sabadini, I.,
Octonionic monogenic and slice monogenic Hardy and Bergman spaces, 
\emph{Forum Math.} \textbf{36(4)} (2024), 1031-1052.


\bibitem{CK}
Constales, D. and Krau\ss har, R., Octonionic Kerzman--Stein operators, \emph{Complex Anal. Oper. Theory.}
\textbf{15(6)} (2021),  Paper No. 104.



\bibitem{Fichera}
Fichera, G., Problemi al contorno per le funzioni pluriarmoniche,
Atti del Convegno celebrativo dell'80$^o$ anniversario della nascita di Renato Calapso, (Messina-Taormina, 1981), Ed. Veschi, Roma, 127-152.

 


\bibitem{Grafakos}
Grafakos, L., \emph{Classical and Modern Fourier Analysis,} Prentice Hall, Pearson, 2004.

\bibitem{Homander1}
 H\"ormander, L., \emph{An introduction to complex analysis in several variables}, Second revised edition,   North-Holland Mathematical Library, vol. \textbf{7},
North-Holland Publishing Co, Amsterdam-London, 1973, American Elsevier Publishing Co.,  New York. 

 \bibitem{Hormander}
{H\"ormander, L., \emph{The analysis of linear partial differential operators I,} in: Grundlehren der Mathematischen Wissenschaften, vol. \textbf{256} (2nd Edition), Springer-Verlag, Berlin (2003).}

 
\bibitem{HR} Huo, Q.-H. and  Ren, G.-B., Structure of octonionic Hilbert spaces with applications in the Parseval
equality and Cayley--Dickson algebras, \emph{J. Math. Phys.} \textbf{63(4)} (2022),   Article ID 042101.



 \bibitem{Krump}Krump, L. and Sou\v cek, V., The generalized Dolbeault complex in two Clifford variables, \emph{Adv. Appl. Clifford Algebr.} \textbf{17} (2007), 537-548.

 



\bibitem{Li}
Li, Y., Gilbert's conjecture and a new way to octonionic analytic functions from the Clifford analysis,
\emph{J. Geom. Anal.} \textbf{34(7)} (2024),   Paper No. 205.



\bibitem{KL}
Krau\ss har, R. and Legatiuk, D., Cauchy formulae and Hardy spaces in discrete octonionic analysis,
\emph{Complex Anal. Oper. Theory.} \textbf{18(1)} (2024), Paper No. 13.

\bibitem{MS}
Manogue, C. and Schray, J., Octonionic representations of Clifford algebras and triality, \emph{Found. Phys.} \textbf{26} (1996), 17-70.

\bibitem{MPT}
Maggesi, M., Pertici, D. and  Tomassini, G., Extension and tangential CRF conditions in quaternionic analysis,
\emph{Ann. Mat. Pura Appl.}   {\textbf{199}} (2020), 2263-2289.

\bibitem{Naci}
 Nacinovich, M., Complex analysis and complexes of differential operators,  Springer LNM \textbf{287} (1973), 105-195.
 
 \bibitem{Nacinovich85}  
Nacinovich, M.,   On boundary Hilbert differential complexes, {\it  Ann. Polon. Math.} \textbf{  46}  (1985), 213-235.

 




\bibitem{Poincare}
Poincar\'e, H.,  Sur les fonctions de deux variables, \emph{Acta Math.} \textbf{2}  (1883), 97-113.


 


 \bibitem{Sabadini} Sabadini, I., Sommen, F., Struppa, D. and  van Lancker, P., Complexes of Dirac operators in Clifford algebras, \emph{Math. Z.} \textbf{239} (2002), 293-320.

 

\bibitem{Schafer}
Schafer, R., \emph{Introduction to non-associative algebras}, Dover, New York, 1995. 

 


\bibitem{SW3}Shi, Y. and  Wang, W., Hartogs--Bochner extension for monogenic functions of several  vector variables and the Dirac complex,  \emph{Ann. Mat. Pura Appl.} \textbf{205(1)} (2026), 147-179.

\bibitem{SWW}Shi, Y., Wang, W. and Wu, Q.-Y.,  On monogenic functions and the Dirac complex of two vector variables,   {\emph{Adv. Appl. Clifford Algebr.}   \textbf{35(2)} (2025), Paper No. 18.} 
 

 
\bibitem{SpringerV}
Springer, T.A. and Veldkamp, F.D., {\it Octonions, Jordan Algebras and Exceptional Groups,}
Springer Monographs in Mathematics, Springer, Berlin, 2000.

\bibitem{WanW}
Wan, D.-R. and  Wang, W., On the quaternionic Monge--Amp\`ere operator, closed positive currents and Lelong-Jensen
type formula on the quaternionic space,
\emph{Bull. Sci. Math.} \textbf{141(4)} (2017), 267-311.

 
\bibitem{WangR}
Wang, H.-Y. and Ren, G.-B., Octonion analysis of several variables, \emph{Commun. Math. Stat.} \textbf{2} (2014), 163-185.

 


\bibitem{Wa10}{ Wang, W.},
The $k$-Cauchy--Fueter complexes, Penrose transformation  and Hartogs' phenomenon  for quaternionic $k$-regular functions,   {\it J. Geom. Phys.} {\bf 60} (2010), 513-530.



\bibitem{wang222}
Wang, W., The Neumann problem for the $k$-Cauchy--Fueter complex over $k$-pseudoconvex domains in $\mathbb R^4$ and the $L^2$ estimate, \emph{J. Geom. Anal.} \textbf{29} (2019),   1233-1258.

\bibitem{wang29}
Wang, W., On the boundary complex of the $k$-Cauchy--Fueter complex, \emph{Ann. Mat. Pura Appl.} \textbf{202} (2023), 2255-2291.
 
   \bibitem{XS}
Xu, Z.-H. and  Sabadini, I., Generalized partial-slice monogenic functions: the octonionic case,  to appear in \emph{Trans. Amer. Math. Soc.,} arXiv:2503.12409v4.
\end{thebibliography}
\end{document}